\documentclass{article}

\usepackage[utf8]{inputenc}
\usepackage[english]{babel}

  \usepackage[T1]{fontenc}
  \usepackage[mathcal]{eucal}
  \usepackage{amsfonts}
 \usepackage[colorlinks=false]{hyperref}
  
  \usepackage{amsmath}
  \usepackage{amsthm}
  \usepackage{amssymb}
  \usepackage{mathtools}
  \usepackage{mathrsfs}
  \usepackage{stmaryrd}
  \usepackage{authblk}
  \usepackage{graphicx}
  \usepackage[all]{xy}
  \usepackage{relsize, exscale} 
  \usepackage{tikz-cd}
  \usepackage{wasysym}
  \usepackage{enumitem}
  \usepackage{float}
\usepackage{xcolor}

\makeatletter
\newcommand{\subjclass}[2][1991]{%
  \let\@oldtitle\@title%
  \gdef\@title{\@oldtitle\footnotetext{#1 \emph{Mathematics subject classification.} #2}}%
}
\newcommand{\keywords}[1]{%
  \let\@@oldtitle\@title%
  \gdef\@title{\@@oldtitle\footnotetext{\emph{Key words and phrases.} #1.}}%
}
  
\setlength{\textwidth}{1.3\textwidth}
\setlength{\oddsidemargin}{0.1\oddsidemargin}

\newtheorem{thm}{Theorem}[section]
\newtheorem{df}[thm]{Definition}

\newtheorem{lem}[thm]{Lemma}
\newtheorem{prop}[thm]{Proposition}
\newtheorem{cor}[thm]{Corollary}
\newtheorem{nota}[thm]{Notation}
\newtheorem{rem}[thm]{Remark}

\newtheorem{set}[thm]{Setup}

\newcommand{\bea}{\begin{eqnarray}}
\newcommand{\eea}{\end{eqnarray}}
\newcommand{\bna}{\begin{eqnarray*}}
\newcommand{\ena}{\end{eqnarray*}}

\newcommand{\R}{\mathbb{R}}
\newcommand{\Bl}{\operatorname{Bl}}
\newcommand{\Op}{\mathcal{O}_{\mathbb{P}^1}}

\newcommand{\ol}{\mathcal{O}}
\newcommand{\N}{\mathbb{N}}

\newcommand{\Ext}{\operatorname{Ext}}

\newcommand{\pr}{\mathbb{P}}
\DeclareMathOperator{\Supp}{Supp}

\DeclareMathOperator{\NE}{NE}
\newcommand{\Pic}{\operatorname{Pic}}
\newcommand{\NS}{\operatorname{NS}}
\DeclareMathOperator{\Fix}{Fix}
\DeclareMathOperator{\Mob}{Mob}

\title{\textbf{Rationally connected threefolds with nef and bad anticanonical divisor, II}}
\author{Zhixin Xie}
\date{}
\subjclass[2010]{14E30, 14M22.}

\begin{document}
\maketitle
\begin{abstract}
Let $X$ be a smooth complex projective rationally connected threefold with $-K_X$ nef and not semi-ample. 
   In our previous work, we classified all such threefolds when $|{-}K_X|$ has no fixed divisor. In this paper, we continue our classification when $|{-}K_X|$ has a non-zero fixed divisor.
\end{abstract}

\section{Introduction}
Complex projective Fano manifolds play an important role in the framework of the Minimal Model Program (MMP) and appear as one of the building blocks in the birational classification of varieties. Fano manifolds are classified up to dimension three – the classification of Fano threefolds was achieved by Mori-Mukai and Iskovskikh, see \cite{MR641971} and \cite{MR463151, MR503430}. 

Complex projective manifolds with nef anticanonical divisor are a natural generalisation of Fano manifolds and one hopes to similarly fulfil a complete classification for this class of manifolds. One of the methods to study such a manifold is the decomposition theorem for projective manifolds with nef anticanonical bundle by Cao and H\"{o}ring \cite{MR3959071}: for such a manifold $X$, its universal cover $\widetilde{X}$ decomposes as a product
\[
\widetilde{X}\simeq \mathbb{C}^q\times\prod Y_j\times\prod S_k\times Z,
\]
where $Y_j$ are irreducible projective Calabi-Yau manifolds, $S_k$ are irreducible projective hyperk\"{a}hler manifolds (so that $Y_j$ and $S_k$ have trivial canonical bundle), and $Z$ is a projective rationally connected manifold with $-K_Z$ nef (and non trivial as $Z$ is rationally connected). In view of this result, it is important to study the case when X is rationally connected, and it is also the most difficult one.

Recent results by Birkar, Di Cerbo and Svaldi \cite[Theorem~1.6]{Birkar2020BoundednessOE} showed that birationally, there are only finitely many deformation families of projective rationally connected threefolds with nef anticanonical divisor. Thus it is in principle possible to classify these varieties.

This paper addresses the classification problem of smooth projective rationally connected threefolds $X$ with $-K_X$ nef and not semi-ample. In our earlier work \cite{Xie20}, the case when $|{-}K_X|$ has no fixed divisor is completely classified. In this paper, we consider the remaining case when $|{-}K_X|$ has a non-zero fixed divisor. It is shown in op.\ cit.\ that a general member of the mobile part of $|{-}K_X|$ has at lease two irreducible components. Our main result
gives a complete classification when an irreducible component is a non-rational surface and shows that in this case, a general member of the mobile part of $|{-}K_X|$ has exactly two irreducible components.

\subsection{Previous work}
Let $X$ be a smooth projective rationally connected threefold with $-K_X$ nef. If $-K_X$ is semi-ample, we refer to \cite[Sections 5, 6]{MR2129540} for a partial classification.
Another approach to obtain a classification in this case stems from its similarity with the case of weak Fano threefolds, i.e.\ threefolds with nef and big anticanonical bundle. One may analyse the plurianticanonical morphism
\[
\phi_{|{-}mK_X|}\colon X\to Y
\]
for $m$ sufficiently large, as done in the weak Fano case, which led to boundedness of weak Fano threefolds, see \cite{MR1837167,MR1771144,Mckernan2002BoundednessOL}. Together with a discussion of the Mori contractions, one may obtain a classification by following the strategy in \cite{MR2198800,MR2784025}, where the authors gave a classification of weak Fano threefolds with Picard number two.
 
We thus focus on the case where $-K_X$ is not semi-ample.
In \cite{MR2129540}, Bauer and Peternell gave the following criterion for verifying the non semi-ampleness.
\begin{thm}{\rm (\cite[Theorem 2.1]{MR2129540})}
Let $X$ be a smooth projective rationally connected threefold with $-K_X$ nef. Then the Iitaka dimension $\kappa(X, -K_X)$ is at least $1$. 

If the nef dimension \footnote{See \cite[Theorem 2.1]{MR1922095} for the definition of nef dimension and the construction of the nef reduction map associated to a nef divisor.} $n(X, -K_X)$ is $1$ or $2$, then $-K_X$ is semi-ample and the nef reduction map associated to $-K_X$ can be taken as the Stein factorisation of the map defined by some positive multiple of $-K_X$ which is globally generated.
\end{thm}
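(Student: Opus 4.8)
The plan is to treat the two assertions separately. For the bound $\kappa(X,-K_X)\ge 1$, I would argue by Riemann--Roch. Rational connectedness gives $h^i(X,\mathcal{O}_X)=0$ for $i>0$, so $\chi(\mathcal{O}_X)=1$, and the Riemann--Roch formula for the structure sheaf of a smooth projective threefold forces $K_X\cdot c_2(X)=-24$. Substituting this into Riemann--Roch for $-mK_X$ and using that $-K_X$ is nef (so $K_X^3\le 0$) yields $\chi(X,-mK_X)\ge 1+2m$ for all $m\ge 0$. Since $X$ is not of general type, $h^0(X,(m+1)K_X)=0$, hence $h^3(X,-mK_X)=0$ by Serre duality, and so $h^0(X,-mK_X)\ge 1+2m-h^2(X,-mK_X)$. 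It therefore suffices to bound $h^2(X,-mK_X)=h^1(X,(m+1)K_X)$ sublinearly in $m$, for then $h^0(X,-mK_X)\to\infty$ and $\kappa(X,-K_X)\ge 1$. If $-K_X$ is big this is immediate (Kawamata--Viehweg vanishing, and in fact $\kappa=3$); if $-K_X$ has numerical dimension $2$ it follows from a vanishing theorem applied to $-mK_X=K_X-(m+1)K_X$ with $-(m+1)K_X$ nef of numerical dimension $2$. The remaining case $(-K_X)^2\equiv 0$ is the delicate one: here the estimate cannot be purely numerical, and one must use rational connectedness to preclude Zariski-type behaviour of the nef divisor $-K_X$.

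For the second assertion, let $f\colon X\dashrightarrow Y$ be the nef reduction map of $-K_X$, so that $\dim Y=n(X,-K_X)\in\{1,2\}$ and $-K_X$ is numerically trivial on a general fibre of $f$. Since $Y$ is dominated by $X$ it is rationally connected, hence rational, and $Y\cong\mathbb{P}^1$ when $\dim Y=1$; moreover $f$ may be taken to be a morphism (for $\dim Y=1$ this is automatic, since almost-holomorphy forces any resolution of $f$ to contract its exceptional divisors over $\mathbb{P}^1$, and for $\dim Y=2$ one may work on a resolution). Suppose first $\dim Y=1$. A general fibre $F$ is a smooth surface with $\mathcal{O}_X(F)|_F\cong\mathcal{O}_F$, so adjunction gives $K_F\equiv K_X|_F\equiv 0$, and by the Enriques--Kodaira classification $F$ is a minimal surface of Kodaira dimension zero. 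The canonical bundle formula for $f\colon X\to\mathbb{P}^1$ then yields $-K_X\sim_{\mathbb{Q}}f^*(-K_{\mathbb{P}^1}-M-B)$, with $M$ the nef moduli part and $B\ge 0$ the discriminant; since $-K_X$ is nef and non-trivial, the $\mathbb{Q}$-divisor $-K_{\mathbb{P}^1}-M-B$ on $\mathbb{P}^1$ has positive degree, so after clearing denominators $-m_1K_X\sim f^*\mathcal{O}_{\mathbb{P}^1}(d)$ for some integers $m_1,d>0$. This line bundle is globally generated, hence $-K_X$ is semi-ample and $f$ is the Stein factorisation of $\phi_{|{-}m_1K_X|}$.

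Suppose now $\dim Y=2$. A general fibre $\Gamma$ of $f$ is a smooth curve with trivial normal bundle, so $2g(\Gamma)-2=K_X\cdot\Gamma=0$; thus $\Gamma$ is elliptic and $f$ is an elliptic fibration. Kodaira's canonical bundle formula then gives $-K_X\sim_{\mathbb{Q}}f^*D_Y$ for a $\mathbb{Q}$-divisor $D_Y=-K_Y-M_Y-B_Y$ on the rational surface $Y$, with $M_Y$ nef and $B_Y\ge 0$. Since the nef reduction of $f^*D_Y$ factors through $f$, the divisor $D_Y$ is nef of maximal nef dimension $n(Y,D_Y)=2$, and semi-ampleness of $-K_X$ reduces to that of $D_Y$ on $Y$. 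As $D_Y+K_Y=-(M_Y+B_Y)$ is anti-pseudoeffective, one aims to conclude by a base-point-free argument on the surface $Y$, controlling (using rational connectedness) the necessarily negative curves along which $D_Y$ could fail to be semi-ample. As in the previous case, once $-K_X$ is semi-ample the morphism attached to a globally generated multiple of $-K_X$ has Stein factorisation $X\to Y$, which is the nef reduction.

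I expect the two hardest steps to be: in the first part, the bound on $h^2(X,-mK_X)$ when $(-K_X)^2\equiv 0$, where purely numerical input does not suffice and rational connectedness is essential; and in the second part, the semi-ampleness of the nef divisor $D_Y$ on the rational base $Y$ in the case $\dim Y=2$, since on a rational surface a nef divisor of maximal nef dimension need not be semi-ample, so the argument must exploit both the particular shape $D_Y=-K_Y-M_Y-B_Y$ given by the canonical bundle formula and the non-rationality of the general fibre of the nef reduction.
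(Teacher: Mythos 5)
This statement is quoted verbatim from Bauer--Peternell \cite[Theorem 2.1]{MR2129540}; the paper gives no proof of it, so your attempt can only be measured against the cited source. Your skeleton is the standard one (Riemann--Roch plus a case division by numerical/nef dimension), and several steps are correct and complete: $\chi(\mathcal{O}_X)=1$ and $K_X\cdot c_2(X)=-24$ give $\chi(X,-mK_X)\geq 2m+1$, uniruledness kills $h^3$, and the generalised Kawamata--Viehweg vanishing $H^i(X,K_X+L)=0$ for $i>\dim X-\nu(L)$ correctly disposes of $h^2(X,-mK_X)$ when $\nu(-K_X)\geq 2$. The $n=1$ case of the second assertion is also essentially right: the nef reduction to a curve is automatically a morphism, the general fibre has $K_F\equiv 0$ hence $K_F$ torsion, and the canonical bundle formula exhibits $-K_X$ as the pullback of a positive-degree $\mathbb{Q}$-divisor on $\mathbb{P}^1$.

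However, at the two genuinely hard points you state what must be shown rather than showing it, and in both places the gap is real. First, when $\nu(-K_X)=1$ you need to exclude that $h^0(X,-mK_X)\leq 1$ for all $m$, equivalently that $h^1\big(X,(m+1)K_X\big)$ grows linearly; ``one must use rational connectedness to preclude Zariski-type behaviour'' is not an argument, and no mechanism is proposed for converting the cohomology classes in $H^1\big(X,(m+1)K_X\big)$ into sections of $-mK_X$. Second, in the $n=2$ case you reduce to the semi-ampleness of a nef $\mathbb{Q}$-divisor $D_Y$ of maximal nef dimension on a rational surface $Y$, and you yourself note that this hypothesis is insufficient: $-K_Y$ on the blow-up of $\mathbb{P}^2$ at nine very general points is nef with $n(Y,-K_Y)=2$ and is not semi-ample, so the whole content of the case lies in the step you leave open. (There is also an unaddressed technical point there: for $n=2$ the nef reduction is only almost holomorphic, and passing to a resolution destroys the identity $-K_X\sim_{\mathbb{Q}}f^*D_Y$ along the exceptional divisors, so the canonical bundle formula cannot simply be quoted on a resolution.) As it stands the proposal is an accurate map of where the difficulties lie, but not a proof.
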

\noindent
By a result of Kawamata \cite[Theorem~6.1]{MR782236}, if $\kappa(X,-K_X)=\nu(X,-K_X)$, where $\nu$ denotes the numerical dimension, then $-K_X$ is semi-ample. Thus, in practice, the above theorem, together with \cite[Theorem 6.1]{MR3145741}, implies that the non semi-ampleness of $-K_X$ is equivalent to $n(X,-K_X)=3$ and $\nu(X,-K_X)=2$, which is also equivalent to $\nu(X,-K_X)=2$ and $\kappa(X,-K_X)=1$.

We start the investigation with the base locus of the anticanonical system $|{-}K_X|$ as the latter one is non-empty and not semi-ample. When $|{-}K_X|$ has no fixed divisor, a complete classification is obtained in \cite[Theorem 1.1]{Xie20}. Now if $|{-}K_X|$ has a non-zero fixed divisor, it turns out that, after a finite
sequence of flops, one can assume that the mobile part of $|{-}K_X|$ is base-point-free.
\begin{thm}{\rm(\cite[Theorem 1.2, Corollary 2.8, Lemma 4.2]{Xie20})}\label{introfixed}
    Let $X$ be a smooth projective rationally connected threefold with $-K_X$ nef and not semi-ample. Assume that $\Fix|{-}K_X|\neq 0$. Then there exists a finite sequence of flops $\psi\colon X\dashrightarrow X'$ such that the following holds: 
\begin{itemize}
\item $X'$ is smooth,
    \item $-K_{X'}$ is nef,
    \item $\Mob|{-}K_{X'}|$ is base-point-free and induces a fibration $f\colon X'\to\mathbb{P}^1$.
\end{itemize}
Moreover, we have
\[
|{-}K_{X'}|=A+|kF| \text{ with } k\geq 2,
\]
where $F$ is a general fibre of $f$. Furthermore, we have
\[A^3=A^2\cdot F=0,\] 
and $F$ is a smooth surface with $-K_F$ effective, nef and not semi-ample.
\end{thm}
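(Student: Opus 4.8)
The plan is to construct $X'$ from $X$ by a sequence of flops that removes the base locus of the mobile part of $|{-}K_X|$, and then to read off the numerical invariants and the structure of a general fibre by adjunction. To begin, recall from the discussion above that $\kappa(X,-K_X)=1$ and $\nu(X,-K_X)=2$. Write $|{-}K_X|=A+|M|$ with $A=\Fix|{-}K_X|\neq 0$ and $|M|=\Mob|{-}K_X|$. Then $M\neq 0$, since otherwise $-K_X$ would be rigid, contradicting $\kappa=1$; and since $1\leq\kappa(X,M)\leq\kappa(X,-K_X)$ we get $\kappa(X,M)=1$. For $m\gg 0$ the image of $\phi_{|mM|}$ is a curve, whose normalisation — equivalently, the Stein factorisation of a resolution of the induced rational map — is $\mathbb{P}^1$ because $X$ is rationally connected. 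The goal is to turn this rational fibration into a morphism on a model obtained from $X$ by flops alone, so that smoothness, nefness of $-K$, and rational connectedness are all preserved.

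The flops are the heart of the matter. Since $|M|$ has no fixed divisor, its base locus is a finite union of curves, and the crucial point is that every such base curve $C$ satisfies $-K_X\cdot C=0$: nefness gives $-K_X\cdot C\geq 0$, and for the reverse inequality one argues that $C$ lies in the null locus of $-K_X$, using that $C$ is a negative curve for the mobile system $|M|=|{-}K_X-A|$ together with $\nu(X,-K_X)=2$ (which makes the null locus a proper closed subset). One then flops a suitable $K_X$-trivial extremal curve in the base locus, obtaining $X\dashrightarrow X_1$ with $X_1$ smooth, $-K_{X_1}$ nef, $X_1$ rationally connected, and any base curves of the transformed system still being $K_{X_1}$-trivial and hence again floppable. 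By termination of threefold flops this process stops, and it can only stop once no base curve remains, so at the end $\Mob|{-}K_{X'}|$ is base-point-free and induces a fibration $f\colon X'\to\mathbb{P}^1$. Establishing that the base curves are $K_X$-trivial, that the induced birational map is genuinely a flop preserving all the listed properties (in particular smoothness), and that the sequence terminates, is where the real work lies.

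It remains to extract the stated numerical data. Being base-point-free with one-dimensional image, $\Mob|{-}K_{X'}|$ equals $|kF|$ for $F$ a general fibre of $f$, and $k\geq 2$: a general member of $|kF|$ is a disjoint union of $k$ (irreducible) fibres, while by op.\ cit.\ a general member of the mobile part has at least two irreducible components. As $F$ moves in a fibration over a curve, $F^2\equiv 0$, so $AF^2=F^3=0$; expanding $0=(-K_{X'})^3=(A+kF)^3$ (using $\nu(X',-K_{X'})=2$) gives $A^3+3kA^2\cdot F=0$, and $(-K_{X'})^2\cdot F=A^2\cdot F$. A general fibre has trivial normal bundle, so adjunction yields $-K_F=-K_{X'}|_F=A|_F$, whence $A^2\cdot F=(-K_F)^2$. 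If this were positive, $-K_F$ would be nef and big on the surface $F$, hence semi-ample; combined with $-K_{X'}=A+kF$ (the divisor $kF$ being semi-ample as a pull-back from $\mathbb{P}^1$), a relative semi-ampleness argument over $\mathbb{P}^1$ would then force $-K_{X'}$ semi-ample, a contradiction. Hence $A^2\cdot F=0$, and then $A^3=0$.

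Finally, the general fibre. $F$ is smooth by generic smoothness. By the adjunction computation above $-K_F=A|_F$, which is nef as the restriction of the nef divisor $-K_{X'}$ and effective as the restriction of the effective divisor $A$; it is moreover nonzero, since a fibre of $f$ cannot be a component of $A$ — such a component would move in $|{-}K_{X'}|$ rather than being fixed. And $-K_F$ is not semi-ample, by exactly the argument used above to deduce $A^2\cdot F=0$. Beyond the flop construction, the two points that require the most care are ruling out vertical components of $A$ and the passage from semi-ampleness of $-K_F$ to that of $-K_{X'}$.
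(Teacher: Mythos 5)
First, a point of reference: the paper does not prove Theorem \ref{introfixed} at all --- it is imported from \cite{Xie20} (Theorem 1.2, Corollary 2.8, Lemma 4.2 there), so there is no in-paper argument to compare yours against. Judged on its own terms, your proposal identifies the right strategy (flop $K_X$-trivial curves in the base locus of the mobile part until it becomes base-point-free, then read off the numerics by adjunction), but it has a genuine gap at its centre: the flop construction is asserted rather than proved. You yourself flag that showing the base curves are $K_X$-trivial, that the resulting contractions are small and admit flops preserving smoothness and nefness of $-K$, and that the process terminates ``is where the real work lies'' --- but that \emph{is} the content of the theorem. In particular, your reason for $-K_X\cdot C=0$ on a base curve $C$, namely that ``$C$ is a negative curve for the mobile system,'' is not justified: a base curve of a mobile linear system need not be negative for it, and the standard route (a cone argument in the style of \cite[Lemma 2.5]{Xie20} producing a $(K_X+\epsilon B)$-negative extremal ray $\Gamma$ with $B\cdot\Gamma<0$, followed by the observation that a smooth threefold admits no small $K_X$-negative extremal contraction) is nowhere reproduced.

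The numerical part is both roundabout and incomplete. The identities $A^2\cdot F=A^3=0$ follow in two lines: $0=(-K_{X'})^3=(-K_{X'})^2\cdot A+k\,(-K_{X'})^2\cdot F$, both summands are non-negative since $-K_{X'}$ is nef and $A$, $F$ are effective, hence both vanish; expanding, using $F^2\equiv 0$, gives $(-K_{X'})^2\cdot F=A^2\cdot F=0$ and $(-K_{X'})^2\cdot A=A^3=0$. Your detour --- ``if $(-K_F)^2>0$ then $-K_F$ is nef and big, hence $-K_{X'}$ is semi-ample'' --- replaces this with an implication that itself requires proof, and the same unproved implication (``if $-K_F$ were semi-ample then so would be $-K_{X'}$'') is all you offer for the final and subtlest claim, that $-K_F$ is not semi-ample. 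That implication is not formal: the relative base-point-free theorem over $\pr^1$ does not apply once $-K_{X'}$ fails to be $f$-big, and one must instead argue that semi-ampleness of $-K_F$ would force $\kappa(X',-K_{X'})=\nu(X',-K_{X'})$ so that Kawamata's criterion applies, which requires controlling $f_*\ol_{X'}(-mK_{X'})$ on $\pr^1$. Finally, deducing $k\geq 2$ from the statement that a general member of the mobile part has at least two irreducible components, itself quoted from op.\ cit., is circular in a blind proof, since once the mobile part is $|kF|$ that statement is essentially equivalent to $k\geq 2$.
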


Now, in order to study the geometry of the fibration $X'\to \pr^1$ in Theorem \ref{introfixed}, we consider the following setup where we denote $X'$ by $X$ for simplicity of notation in the rest of our discussion.
\begin{set}\label{generalsetup}
Let $X$ be a smooth projective rationally connected threefold with anticanonical bundle $-K_X$ nef and not semi-ample. Assume that $A\coloneqq \Fix|{-}K_X|\neq 0$ and $|B|\coloneqq\Mob|{-}K_X|$ is base-point-free, inducing a fibration $f\colon X\to\mathbb{P}^1$.

If $F$ is a fibre of $f$, then $F$ is a smooth surface with $-K_F$ effective, nef and not semi-ample. We have
\[|{-}K_X|=A+|kF| \text{ with } k\geq 2\] 
and \[A^3=A^2\cdot F=0.\] 

Now we write $A=A_h+A_v$, where $A_h$ and $A_v$ are effective divisors such that $A_h|_F=-K_F$ and $A_v|_F=0$ for a general fibre $F$.
\end{set}

\subsection{Main results and organisation of the paper}
In this paper, we give a complete classification when the general fibre $F$ in Setup \ref{generalsetup} is a non-rational surface.
\begin{thm}\label{mainnonrat}
{\rm(A)} In Setup \ref{generalsetup}, assume that the surface $F$ is non-rational.
Then $X=\pr(\mathcal{V})$ is a $\pr^1$-bundle over $Y$, where $Y$ is isomorphic to $\pr^2$ blown up in $9$ points such that $-K_Y$ is nef and base-point-free (thus induces an elliptic fibration $\pi\colon Y\to\pr^1$ with general fiber denoted by $R$) and $\mathcal{V}$ is a rank-$2$ vector bundle defined by a non-split extension
\[
0\to\mathcal{O}_Y\to\mathcal{V}\to\mathcal{O}_Y(K_Y)\to 0,
\]
and the fibration $f$ factors as $X\rightarrow Y\overset{\pi}\rightarrow\pr^1$.

Furthermore, $|{-}K_X|=2D+|2F|$ with $D=\pr\big ( \mathcal{O}_Y(K_Y)\big ) \simeq Y$, and $F=\mathbb{P}\big( \mathcal{E}\big )$ is a $\mathbb{P}^1$-bundle over the smooth elliptic curve $R$, where $\mathcal{E}$ is a rank-$2$ vector bundle over $R$ defined by a non-split extension
\[
0\to\mathcal{O}_R\to\mathcal{E}\to\mathcal{O}_R\to 0.
\]

\smallskip
{\rm(B)} Conversely, let $Y$ be $\pr^2$ blown up at $9$ points such that $-K_Y$ is nef, base-point-free and thus induces an elliptic fibration $\pi\colon Y\to\pr^1$ with general fibre denoted by $R$. Let $\mathcal{V}$ be a rank-$2$ vector bundle over $Y$ defined by a non-split extension
\begin{equation}\label{extennonrat}
0\to\mathcal{O}_Y\to\mathcal{V}\to\mathcal{O}_Y(K_Y)\to 0
\end{equation}
and let $\varphi\colon X\coloneqq\pr(\mathcal{V})\to Y$. Then $-K_X$ is nef and not semi-ample and $|{-}K_X|=2D+|2F|$, where $D\coloneqq\pr(\mathcal{O}_Y(K_Y))$ and $F$ is a general fibre of the fibration $f\coloneqq \pi\circ\varphi\colon X\to\pr^1$. Moreover, $F=\pr(\mathcal{E})$ is a $\pr^1$-bundle over the smooth elliptic curve $R$, where $\mathcal{E}$ is a rank-$2$ vector bundle over $R$ and defined by a non-split extension
\[
0\to\mathcal{O}_R\to\mathcal{E}\to\mathcal{O}_R\to 0.
\]
\end{thm}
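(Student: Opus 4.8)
For part (A), the plan is to pin down the general fibre $F$, then promote the ruling on the fibres to a global $\pr^1$-bundle structure on $X$, and finally read off the vector bundle from $-K_X$. First, by Setup \ref{generalsetup} the surface $F$ is non-rational with $-K_F$ effective, nef and not semi-ample, so the classification of such surfaces forces $F\simeq\pr(\mathcal{E})$, the projectivisation of the Atiyah bundle over a smooth elliptic curve $R$; here $-K_F=2C_0$, where $C_0\simeq R$ is the unique section of the ruling $\rho\colon F\to R$ with $C_0^2=0$, and $|{-}K_F|$ consists of the single divisor $2C_0$, while $K_F^2=0$. Since $A_h|_F=-K_F=2C_0$, I would conclude that $A_h=2D$ for a single prime divisor $D$ with $D|_F=C_0$ (two distinct prime divisors both restricting to $C_0$ on every general fibre would meet in a surface, and a prime divisor cannot restrict to the non-reduced $2C_0$); thus $\pi\colon D\to\pr^1$, the restriction of $f$, is a genus-one fibration with general fibre $D\cap F=C_0\simeq R$.

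Next I would construct the morphism to $Y$. Over the open set $U\subseteq\pr^1$ above which $f$ is smooth, the ruling $\rho_t\colon F_t\to C_{0,t}$ is intrinsic --- it is the Albanese map of $F_t$, equivalently the unique fibration of $F_t$ with rational fibres --- so these maps assemble to a morphism $X_U\to D_U$ over $U$, compatible with the inclusion $D_U=\bigcup_t C_{0,t}\subseteq X_U$. The goal is then to show this extends to a morphism $\varphi\colon X\to D=:Y$ all of whose fibres are smooth rational curves, so that $\varphi$ is a $\pr^1$-bundle and $X=\pr(\mathcal{V})$ for a rank-two bundle $\mathcal{V}$ on $Y$, with $D$ a section; concretely one can try to identify $\varphi$ with the Mori contraction of the extremal ray spanned by a general ruling line $\ell$ (which satisfies $-K_X\cdot\ell=2$ and covers $X$), and use the nefness of $-K_X$ to rule out degenerate fibres of $\varphi$ --- a reducible or non-reduced fibre would contain a curve meeting $-K_X$ non-positively, which, together with the constraints on the special fibres of $f$, should be impossible. \emph{I expect this extension-and-no-degenerate-fibres step to be the main obstacle}: controlling the fibres of $f$ over $\pr^1\setminus U$ and turning the relative ruling into an honest everywhere-defined $\pr^1$-bundle is exactly where the nefness of $-K_X$ and the classification of $F$ have to be used in an essential way.

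With $\varphi\colon X=\pr(\mathcal{V})\to Y$ in hand, the remainder is bookkeeping. Since $X$ is rationally connected, $H^i(X,\mathcal{O}_X)=0$ for $i>0$, and since the $\varphi$-fibres are $\pr^1$'s the Leray spectral sequence gives $H^i(Y,\mathcal{O}_Y)=0$ for $i>0$, hence $q(Y)=p_g(Y)=0$ and $\chi(\mathcal{O}_Y)=1$; together with $K_Y^2=0$, the genus-one fibration $\pi$ and $-K_Y$ effective, standard surface theory identifies $Y$ with $\pr^2$ blown up at $9$ points, $-K_Y$ nef and base-point-free, $\pi$ the associated elliptic fibration with general fibre $R$, and $f=\pi\circ\varphi$. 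Computing $-K_X$ on the $\pr^1$-bundle and using adjunction along the section $D$, I would get $-K_X=2D+2F$ (so $A_v=0$ and $k=2$ in the notation of Setup \ref{generalsetup}), $\mathcal{O}_X(D)|_D\simeq\mathcal{O}_Y(K_Y)$, hence $\det\mathcal{V}=\mathcal{O}_Y(K_Y)$ and an exact sequence $0\to\mathcal{O}_Y\to\mathcal{V}\to\mathcal{O}_Y(K_Y)\to 0$. That this extension is non-split I would prove by contradiction: if it split, $X=\pr(\mathcal{O}_Y\oplus\mathcal{O}_Y(K_Y))$ would carry a second section $D'$ disjoint from $D$ with $\mathcal{O}_X(D')=\mathcal{O}_X(D+F)$, and then $|D'|$ --- which contains $D'$ as well as the pencil $\{D+\varphi^*T:T\in|{-}K_Y|\}$ --- would have base locus inside $D\cap D'=\varnothing$, making $|D'|$, hence $|{-}K_X|=|2D'|$, base-point-free and contradicting the non-semi-ampleness of $-K_X$. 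Finally, restricting the extension to a general fibre $R$ and using $\mathcal{O}_Y(K_Y)|_R\simeq\mathcal{O}_R$ (adjunction, together with $R^2=0$) yields the non-split extension $0\to\mathcal{O}_R\to\mathcal{E}\to\mathcal{O}_R\to 0$ with $\mathcal{E}=\mathcal{V}|_R$ and $F=\pr(\mathcal{E})$, completing (A).

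For the converse (B), one argues by direct computation. For $X=\pr(\mathcal{V})$ with $\mathcal{V}$ the extension \eqref{extennonrat} one has $\det\mathcal{V}=\mathcal{O}_Y(K_Y)$, hence $-K_X=2\xi-2\varphi^*K_Y=2D+2F$ with $D=\pr(\mathcal{O}_Y(K_Y))$ and $F=\varphi^*R$; since $-K_X|_D\simeq\mathcal{O}_Y(2K_Y+2R)=\mathcal{O}_D$ (using $R\sim-K_Y$), $-K_X|_F=-K_F$ and $-K_X\cdot\ell=2$ on a $\varphi$-fibre, a check on all curves shows $-K_X$ is nef. Pushing forward, $\varphi_*\mathcal{O}_X(2mD+2mF)=S^{2m}\mathcal{V}\otimes\mathcal{O}_Y(2mR)$, and the filtration of $S^{2m}\mathcal{V}$ with graded pieces $\mathcal{O}_Y(jK_Y)$, $0\leq j\leq 2m$, together with the non-splitness of $\mathcal{V}$ (which makes the relevant connecting homomorphisms injective), gives $|{-}K_X|=2D+|2F|$ and $\kappa(X,-K_X)=1$; since $(-K_X)^3=(2D+2F)^3=0$ while $(-K_X)^2\not\equiv 0$, one has $\nu(X,-K_X)=2\neq\kappa(X,-K_X)$, so $-K_X$ is not semi-ample. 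The descriptions $F=\pr(\mathcal{E})$ and $0\to\mathcal{O}_R\to\mathcal{E}\to\mathcal{O}_R\to 0$ then follow by restricting \eqref{extennonrat} to a general $R$ exactly as in part (A).
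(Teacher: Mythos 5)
Your first step already contains the central gap. The classification of non-rational surfaces with $-K_F$ effective, nef and not semi-ample (\cite[Proposition 1.6]{MR2129540}) does \emph{not} force $F$ to be the projectivisation of the Atiyah bundle: there is a second case, $F=\pr(\mathcal{O}_R\oplus\mathcal{L})$ with $\mathcal{L}$ a non-torsion degree-zero line bundle, in which $-K_F=C_1+C_2$ is a sum of two disjoint elliptic sections and is still effective, nef and not semi-ample (one has $\kappa(F,-K_F)=0<1=\nu(F,-K_F)$). This case satisfies every hypothesis of Setup \ref{generalsetup}, and excluding it is a large part of the paper's proof: Lemmas \ref{notprime} and \ref{product} show that then $A=D_1+D_2$ with $D_i\simeq C_i\times\pr^1$, Proposition \ref{nonratbirat} shows that any birational $K_X$-negative contraction reproduces the same situation on a smaller threefold, and Proposition \ref{nonratnonbirat} shows that the terminal fibre-type contraction is a $\pr^1$-bundle over a rational surface $Y$, which is incompatible with $A=D_1+D_2$ because $q(Y)=0$ while $q(D_i)=1$. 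Only this chain eliminates the second type of fibre. Relatedly, you never address the $f$-vertical part $A_v$ of the fixed divisor: your later parenthesis ``so $A_v=0$'' is a consequence of the structure you are trying to establish, not an input, and in the paper the vanishing of $A_v$ in the case $A_h=2D$ occupies Lemmas \ref{flopfvertical}--\ref{case2birat} and Corollary \ref{Aviszero} (flops to make $D$ relatively nef, Wilson's classification of crepant contractions, and a case analysis of the resulting extremal contractions).

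The second substantive gap is the one you flag yourself: promoting the fibrewise ruling to a global $\pr^1$-bundle. The paper does not extend a relative Albanese map over the bad fibres of $f$; it classifies the possible $K_X$-negative extremal contractions (Propositions \ref{generalbiratcontr} and \ref{generalnonbiratcontr}), concludes that the fibre-type contraction $\varphi\colon X\to Y$ is a conic bundle through which $f$ factors, and then kills the discriminant locus $\Delta$ using Mori's criterion that for an extremal conic bundle every nonsingular rational component of $\Delta$ must meet the other components in at least two points --- here $\Delta$ lies in finitely many fibres of $\pi$, so it must be empty. Your sketch (``a degenerate fibre would contain a curve meeting $-K_X$ non-positively'') does not by itself rule out a non-empty discriminant, since components of a reducible conic still satisfy $-K_X\cdot\ell=1\geq 0$. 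Finally, in part (B) the non-splitness of the restricted extension $0\to\mathcal{O}_R\to\mathcal{E}\to\mathcal{O}_R\to 0$ is not automatic from the non-splitness of \eqref{extennonrat}: the paper proves it via the Leray spectral sequence and $R^1\pi_*\omega_{Y/\pr^1}\simeq\mathcal{O}_{\pr^1}$, identifying $\Ext^1\big(\mathcal{O}_Y(K_Y),\mathcal{O}_Y\big)\simeq\C$ with $H^0\big(\pr^1,R^1\pi_*\pi^*\mathcal{O}_{\pr^1}(1)\big)$ and using cohomology and base change to see that the class restricts non-trivially to each fibre; this matters because a split restriction would give $F\simeq R\times\pr^1$ and invalidate the computation $\kappa(X,-K_X)=1$. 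Apart from that, your part (B) computation of $-K_X=2D+2F$, its nefness and $\nu(X,-K_X)=2$ follows the paper's route.
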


Since we applied Theorem \ref{introfixed} in order to reduce to Setup \ref{generalsetup}, the classification obtained in Theorem \ref{mainnonrat} is up to flops. Thus we also want to track, a posteriori, the sequence of flops. To this end, we describe all extremal rays of the cone $\overline{\NE}(X)\cap K_X^{\perp}$ for the threefolds $X$ in Theorem \ref{mainnonrat}. We obtain more precisely the following result.
\begin{prop}\label{maincone}
   In Theorem \ref{mainnonrat}, we have a morphism $\NE(D)\to\NE(X)$ induced by the inclusion $D\hookrightarrow X$. The cone $\NE(X)$ is closed and the extremal rays of the subcone $\NE(X)\cap K_X^{\perp}$ are spanned by the classes of $(-1)$-curves on $D$ and the classes of $(-2)$-curves on $D$ (or the class $-K_D$ if there is no $(-2)$-curve on $D$, i.e.\ if the elliptic fibration $f|_D\colon D\to\pr^1$ has no reducible fibre). 
   
   Moreover, there are infinitely many flopping contractions on $X$ and each flopping contraction contracts an extremal ray spanned by a class of a $(-1)$-curve on $D$.
\end{prop}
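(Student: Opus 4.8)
The plan is to work on the surface $D \simeq Y$, which is $\mathbb{P}^2$ blown up at $9$ points with $-K_Y$ nef and base-point-free. Since $X = \mathbb{P}(\mathcal V)$ with $0 \to \mathcal O_Y \to \mathcal V \to \mathcal O_Y(K_Y) \to 0$, the divisor $D = \mathbb{P}(\mathcal O_Y(K_Y))$ is a section of $\varphi\colon X \to Y$, and one computes $\mathcal O_X(D)|_D = \mathcal O_Y(K_Y)$ (from the extension and the relative Euler sequence). So $D|_D = K_Y = K_D$, whence by adjunction $K_X|_D = (K_X + D)|_D - D|_D = K_D - K_D = 0$; thus $D \subset X$ is a divisor on which $-K_X$ restricts trivially, i.e. $D$ maps into $K_X^\perp$. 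First I would make this intersection-theoretic setup precise and record that a curve $C \subset D$ has $C \cdot K_X = 0$ automatically, while $C \cdot D = C \cdot K_Y$ depends on the geometry of $C$ on $Y$.

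Next I would establish that $\NE(X)$ is closed and that the part of it lying in $K_X^\perp$ is exactly the image of $\NE(D)$. For closedness: $-K_X = 2D + 2F$ is nef, and any curve with $-K_X \cdot C = 0$ must satisfy $D \cdot C = 0$ and $F \cdot C = 0$, hence $C$ is contracted by $f$ and lies in a fibre; combined with $\varphi$-positivity of a suitable divisor this forces $C \subset D$. More precisely I would argue that $K_X^\perp \cap \overline{\NE}(X)$ is spanned by classes of curves in $D$: a curve contracted by $f$ with $D \cdot C = 0$ is either a fibre of $\varphi$ over a point of $R$ (but such a curve $\ell$ has $-K_X \cdot \ell = \mathcal O_X(1)\cdot\ell\cdot(\deg \mathcal V)^{-1}$-type positivity — in fact $-K_X \cdot \ell = 2 > 0$ using $K_X = \varphi^*(K_Y + \det\mathcal V) - 2\xi = \varphi^*K_Y - 2\xi + \varphi^*(-K_Y)$... so $-K_X\cdot\ell = 2\xi\cdot\ell = 2$), hence not in $K_X^\perp$; or $C$ is a multisection, but then $F \cdot C > 0$. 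So every extremal ray of $\overline{\NE}(X)$ in $K_X^\perp$ comes from a curve inside $D$. Then $\NE(D) \to \NE(X)$ has image containing all these rays; since $D$ is a weak del Pezzo-like rational surface (a "$\frac 19$ del Pezzo", i.e. $-K_D$ nef, big is \emph{false} here — $-K_D$ is nef with $K_D^2 = 0$), its cone $\overline{\NE}(D)$ is rational polyhedral away from the ray $\mathbb{R}_{\geq 0}(-K_D)$, with extremal rays spanned by $(-1)$-curves and $(-2)$-curves on $D$; the $(-K_D)$-direction is a limit of $(-1)$-curve classes unless there are only finitely many such classes (which happens iff there is a $(-2)$-curve / reducible fibre). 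Mapping these into $X$: a $(-1)$-curve $C$ on $D$ has $C \cdot D = C \cdot K_Y = -1$, a $(-2)$-curve has $C \cdot D = 0$; I would check these are genuinely extremal in $\overline{\NE}(X)$ by exhibiting supporting nef divisors of the form $\varphi^* H + aD$.

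For the final clause about flopping contractions: a $(-1)$-curve $C$ on $D$ has $K_X \cdot C = 0$ and $D \cdot C = -1 < 0$, so any extremal contraction of the ray $[C]$ is small (it cannot be divisorial, since $D$ is the only divisor covered by deformations of $C$ and $D \cdot C < 0$ means $C$ is rigid in $D$... actually the standard check: the contraction is $K_X$-trivial and the exceptional locus is a union of such $(-1)$-curves, which are rigid, so the contraction is small), hence a flopping contraction; I would invoke the cone/contraction theorem together with the fact that $-K_X + \varepsilon(\text{ample})$ makes each such ray contractible. Since $Y$ is $\mathbb{P}^2$ blown up at $9$ general-enough points with $-K_Y$ base-point-free, the elliptic surface $D \to \mathbb{P}^1$ has infinitely many sections (Mordell–Weil rank $\geq 1$ generically), equivalently infinitely many $(-1)$-curves, giving infinitely many flopping contractions. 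The main obstacle I anticipate is the clean identification of $\overline{\NE}(X) \cap K_X^\perp$ with the image of $\overline{\NE}(D)$ — in particular ruling out extremal rays of $\overline{\NE}(X)$ in $K_X^\perp$ that are not represented by curves contained in $D$, and handling the boundary ray $\mathbb{R}_{\geq 0}(-K_D)$ correctly (it is extremal in $\overline{\NE}(X)$, and it is a genuine edge rather than a limit precisely when $D \to \mathbb{P}^1$ has a reducible fibre); this requires a careful case analysis of curves in fibres of $f$ using the numerical data $K_X = \varphi^*K_Y - 2\xi + \varphi^*(-K_Y) = -2\xi$ twisted appropriately, i.e. $-K_X \sim 2\xi_{\mathcal V'}$ for the normalised tautological class, which I would set up first.
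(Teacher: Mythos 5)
Your overall strategy coincides with the paper's: show $K_X|_D=0$ by adjunction, import the known description of $\NE(D)$ for a rational elliptic surface, identify $\NE(X)\cap K_X^{\perp}$ with the image of $\NE(D)$, and recognise the $(-1)$-curve rays as simple flopping rays. But there are two genuine problems in the execution. First, the claim that $-K_X\cdot C=0$ forces $D\cdot C=0$ \emph{and} $F\cdot C=0$ is false: $F$ is nef but $D$ is not, so one only gets $D\cdot C=-F\cdot C\le 0$ --- and indeed you later use $(-1)$-curves on $D$ with $F\cdot C=1$, $D\cdot C=-1$. The correct dichotomy (the one the paper uses) is: either $F\cdot C>0$, in which case $D\cdot C<0$ forces $C\subset D$; or $F\cdot C=0=D\cdot C$, in which case $C$ lies in a fibre of $f$.

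Second --- and this is the substantive gap, which you yourself flag as ``the main obstacle'' but do not close --- in the fibre branch you dismiss curves other than fibres of $\varphi$ by saying ``$C$ is a multisection, but then $F\cdot C>0$''. That is wrong: a multisection of $\varphi|_{F_0}\colon F_0\to R_0$ contained in the fibre $F_0$ of $f$ still has $F\cdot C=0$. Such curves (sections or multisections of $F_0=\pr(\mathcal{E}|_{R_0})\to R_0$ other than $D\cap F_0$, or the second ruling of $\varphi^{-1}(\gamma')\simeq\pr^1\times\pr^1$ over a $(-2)$-curve $\gamma'$) are genuinely $K_X$-trivial and need not lie in $D$; one must show their \emph{classes} are proportional to classes of curves in $D$. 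The paper does this on the surface $S=\varphi^{-1}(\gamma')$: when $\gamma'$ is an integral fibre of $\pi$, $-K_S\sim 2D|_S$ is nef with square zero and $\rho(S)=2$, which pins $[\gamma]$ to $\R_+[D\cap S]$; when $\gamma'$ is a $(-2)$-curve, $S\simeq\pr^1\times\pr^1$ and $[\gamma]=[D\cap S]$. Without this step the identity $\phi\big(\NE(D)\big)=\NE(X)\cap K_X^{\perp}$, hence the closedness of $\NE(X)$ and the list of extremal rays, is not established. Two smaller points: the parenthetical ``finitely many $(-1)$-curve classes iff there is a $(-2)$-curve'' is incorrect (a single $I_2$ fibre still leaves Mordell--Weil rank $7$, hence infinitely many $(-1)$-curves); and for contracting a $K_X$-trivial ray you cannot invoke the cone theorem for $K_X$ alone --- the paper contracts it as a $(K_X+S)$-negative extremal ray for the non-nef surface $S=\varphi^{-1}(\gamma')\simeq\mathbb{F}_1$, which also yields the normal bundle computation $N_{\gamma/X}\simeq\Op(-1)^{\oplus 2}$.
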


\medskip

\noindent
{\bf Plan.} We briefly explain the organisation of the paper. In Section \ref{sec:prelim}, we discuss some general results about the geometry of threefolds $X$ as in Setup \ref{generalsetup}.

Section \ref{sec:classification} is devoted to the proof of our main Theorem \ref{introfixed}. Under Setup \ref{generalsetup} and assuming that a general fibre $F$ of $f\colon X\to\pr^1$ is a non-rational surface, we start by investigating the structure of the fixed divisor $A$ of the anticanonical system $|{-}K_X|$ in Subsection \ref{subsec:fixedpart}. Arguing by contradiction, we will show that the fixed divisor $A$ has no $f$-vertical part, and we describe more precisely the $f$-horizontal part of $A$ in Proposition \ref{nonratfixedpart}. This leads to further restrictions on the geometry of $X$ when we run the MMP in Subsection \ref{subsec:mmp} to obtain the classification in Theorem \ref{introfixed}.

In Section \ref{sec:cone}, we study the cone of effective curves of $X$. To this end, we describe all the $K_X$-trivial curves by first showing that every $K_X$-trivial curve class is proportional to a curve (class) contained in the fixed divisor of $|{-}K_X|$. Since the fixed divisor (with reduced structure) is isomorphic to the blow-up of $\pr^2$ at the $9$ base points of a cubic pencil, its cone of effective curves is classically known. In this way, we describe all extremal rays of the cone $\overline{\NE}(X)\cap K_X^{\perp}$ and the corresponding extremal contractions in Lemma \ref{lem:Ktrivcurve}, which implies directly Proposition \ref{maincone}.

\medskip
{\bf Acknowledgements.} Some results in this paper are based on my PhD thesis. I would like to express my sincere gratitude to my supervisor, Andreas H\"{o}ring, for his patient guidance, his constant encouragements and his careful proof-reading.
I heartily thank Cinzia Casagrande for interesting discussions and inspiring suggestions. I thank Vladimir Lazi\'{c} and Nikolaos Tsakanikas for useful comments on the paper.

\section{Preliminary}\label{sec:prelim}

In this paper we work over the field $\mathbb{C}$.

\subsection{Notation and terminology}
We use the following notation throughout the paper, see \cite[Definition 2.1.3, Remark 2.3.17]{MR2095471} for definitions.
\begin{nota}\label{numericaldim}
Let $X$ be a normal projective variety and $D$ a Cartier divisor on $X$. We denote by
\begin{itemize}
    \item $\kappa(D,X)$ the Iitaka dimension of $D$.
    \item $\nu(D,X)\coloneqq \max \{n\mid D^n\not\equiv 0\}$ the numerical dimension of $D$ when $D$ is nef.
\end{itemize}
Consider the complete linear system $|D|$. We denote by
\begin{itemize}
    \item $\Fix|D|$ the fixed divisor of $|D|$.
    \item $\Mob|D|=|D|-\Fix|D|$ the mobile part of $|D|$.
\end{itemize}
\end{nota}

Note that the numerical dimension can also be defined for a pseudo-effective divisor, see for example \cite[Chapter V, Definition 2.5]{MR2104208}.

\begin{df}
    Let $X$ be a normal projective variety. A flopping contraction is an extremal birational contraction $f\colon X\to Y$ to a normal variety $Y$ such that the exceptional locus of $f$ has codimension at least two in $X$ and $K_X$ is numerically $f$-trivial. 

    If additionally $D$ is a $\mathbb{Q}$-Cartier divisor on $X$ such that $-(K_X+D)$ is $f$-ample, then the $(K_X+D)$-flip of $f$ is called the $D$-flop.
\end{df}

\subsection{Results on surfaces}
We will need the following results on surfaces.
\begin{lem}{\rm(\cite[Lemma 4.4, Corollary 4.6]{Xie20})}\label{surfacetwocpnts}
Let $S$ be a projective Gorenstein surface such that the anticanonical divisor $-K_S$ is of the form:
\[
-K_S\sim D_1+D_2,
\]
where $D_1$ is effective and $D_2$ is a non-zero effective Cartier divisor which is nef and divisible by $r\geq 2$ in $\NS(S)$.

Suppose that $D_2^2=0$, and that one of the following assertions holds:
\begin{enumerate}[label=\normalfont(\roman*)]
    \item $S$ is not covered by $D_2$-trivial curves;
    \item $D_2$ contains a smooth curve of positive genus.
\end{enumerate}
Then $D_1=0$ and the surface $S$ is a $\mathbb{P}^1$-bundle over a smooth elliptic curve.
\end{lem}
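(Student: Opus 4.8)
The plan is to convert the hypotheses into rigid constraints on a $\mathbb{P}^{1}$-fibration carried by $S$. First, replace $S$ by its minimal resolution: writing $\mu^{*}K_{S}=K_{\widetilde S}+\Delta$ with $\Delta\geq 0$ exceptional, one has $-K_{\widetilde S}\sim(\mu^{*}D_{1}+\Delta)+\mu^{*}D_{2}$, and $\mu^{*}D_{2}$ is still nef, of self-intersection $0$, non-zero and divisible by $r$ in $\NS(\widetilde S)$; both hypotheses pass to $\widetilde S$ (a $\mu^{*}D_{2}$-trivial curve maps to a point or to a $D_{2}$-trivial curve, and the strict transform of a smooth curve of positive genus is again one). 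So I may assume $S$ smooth. Since $-K_{S}$ is effective and non-trivial, $\kappa(S,K_{S})=-\infty$; as $D_{2}\neq 0$ forces $S\neq\mathbb{P}^{2}$, the Enriques classification gives a $\mathbb{P}^{1}$-fibration $\rho\colon S\to B$ with general fibre $\ell$. The first key point is that $D_{2}$ is not $\rho$-vertical: a $\rho$-vertical nef divisor of square $0$ is, by Zariski's lemma, a non-negative combination of whole fibres, so its support is a union of rational curves (contradicting (ii)) and the fibres $\ell$, which cover $S$, are $D_{2}$-trivial (contradicting (i)). Hence $D_{2}\cdot\ell\geq 1$. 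Adjunction on $\ell\simeq\mathbb{P}^{1}$ gives $-K_{S}\cdot\ell=2$, so $D_{1}\cdot\ell+D_{2}\cdot\ell=2$; since $D_{2}\cdot\ell=r\,(D_{2}'\cdot\ell)$ is a positive multiple of $r\geq 2$, we must have $D_{2}\cdot\ell=2$, $r=2$, $D_{1}\cdot\ell=0$. Thus $D_{1}$ is $\rho$-vertical and $D_{2}=2D_{2}'$ with $D_{2}'\cdot\ell=1$.

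Next I would do the numerical bookkeeping on the ruled surface. Let $\sigma\colon S\to S_{0}=\mathbb{P}(\mathcal F)$ be a relative minimalisation ($\mathcal F$ normalised, $C_{0}^{2}=-e$, $g=g(B)$), and work in the total-transform basis $\NS(S)=\mathbb{Z}\sigma^{*}C_{0}\oplus\mathbb{Z}\sigma^{*}\ell\oplus\bigoplus_{j}\mathbb{Z}E_{j}$, in which $\sigma^{*}(\cdot)\cdot E_{j}=0$. Since $D_{2}\cdot\ell=2$ we have $D_{2}\equiv 2C_{0}+2n'\ell+2\sum a_{j}'E_{j}$; the relation $D_{2}^{2}=0$ gives $2n'=e+\sum (a_{j}')^{2}$, and nefness of $D_{2}$ forces $a_{j}'\leq 0$. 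Using $K_{S}\equiv\sigma^{*}K_{S_{0}}+\sum E_{j}$ and $K_{S_{0}}\equiv-2C_{0}+(2g-2-e)\ell$, one computes that the $\ell$-coefficient of $D_{1}=-K_{S}-D_{2}$ equals $-2(g-1)-\sum (a_{j}')^{2}$. But $D_{1}$ is effective and $\rho$-vertical, so $\sigma_{*}D_{1}$ is a non-negative multiple of a fibre, hence its intersection with $C_{0}$ — precisely that $\ell$-coefficient — is $\geq 0$. Therefore $g\leq 1$, and if $g=1$ then all $a_{j}'=0$, whence $D_{1}\equiv-\sum_{j}E_{j}$; as the class $-\sum E_{j}$ has a negative coefficient along the last exceptional curve it has no effective representative unless there were no blow-ups, so $k=0$, $S=S_{0}$, and $D_{1}\equiv 0$, i.e.\ $D_{1}=0$ and $S=\mathbb{P}(\mathcal F)$ is a $\mathbb{P}^{1}$-bundle over the elliptic curve $B$ (one then sees $e=0$, matching $-K_{S}\equiv 2C_{0}$).

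It remains to rule out $g=0$, i.e.\ $S$ rational. Here $\chi(\mathcal O_{S})=1$, and from $\chi(\mathcal O_{S}(D_{2}'))=1-\tfrac12 K_{S}\cdot D_{2}'\geq 1$ together with $h^{2}(D_{2}')=h^{0}(K_{S}-D_{2}')=0$ one gets that $D_{2}'$ is effective. Now run the Riemann--Roch/Iitaka dichotomy for the nef square-zero divisor $D_{2}'$. If $\kappa(S,D_{2}')\geq 1$, then $D_{2}'$ (nef with $\kappa=\nu=1$) is semi-ample and defines a fibration $\psi\colon S\to\mathbb{P}^{1}$ with $D_{2}$ numerically a multiple of a fibre; the fibres cover $S$ and are $D_{2}$-trivial, contradicting (i), while under (ii) the curve $E\subseteq D_{2}$ lies in a $\psi$-fibre and adjunction forces $g(F_{\psi})\leq 1$ — for $g(F_{\psi})=0$ the fibres are rational trees, and for $g(F_{\psi})=1$ the divisor $-K_{S}$ is $\psi$-vertical, making $S$ a rational elliptic surface whose anticanonical class (a fibre or half-fibre, up to blow-ups) cannot contain the $2$-divisible class $D_{2}$ unless the relevant fibre is reducible and hence all-rational, again contradicting (ii). If instead $\kappa(S,D_{2}')=0$, then $\chi(\mathcal O_{S}(-D_{2}'))=1$ together with Serre duality and $h^{0}(-D_{2}')=0$ shows that $K_{S}+D_{2}'$ is effective; with $-K_{S}\sim D_{1}+2D_{2}'$ this makes $D_{1}+D_{2}'+(K_{S}+D_{2}')\sim 0$ a sum of effective divisors, hence zero, so $D_{2}'=0$ — absurd. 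This closes the case $g=0$ and finishes the proof.

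I expect the last step — eliminating the rational case — to be the main obstacle, especially the rational-elliptic-surface sub-argument (which needs the bound on the number of multiple fibres of a rational elliptic surface and the primitivity of fibre classes), together with the care needed to make the bookkeeping of the second paragraph unambiguous when the blown-up points are not in general position relative to $C_{0}$ and the fibres (resolved by the total-transform basis and by pushing $D_{1}$ forward to $S_{0}$). A small fact used repeatedly: a nef divisor $D$ on a smooth surface with $D^{2}=0$ and $h^{0}(mD)\geq 2$ for some $m$ has $|mD|$ base-point free for $m\gg 0$, since resolving a base point would produce a mobile linear system of negative self-intersection.
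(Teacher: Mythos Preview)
The paper does not give its own proof of this lemma; it is quoted without argument from the author's earlier work \cite{Xie20}, so there is nothing in the present paper to compare your attempt against.

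On its own merits, your argument is well organised and essentially correct. Two remarks. In the $g=1$ step, the phrase ``$-\sum E_j$ has a negative coefficient along the last exceptional curve'' is not quite the statement that yields non-effectivity (intersecting with the last $(-1)$-curve gives $+1$, not a negative number); a clean finish is simply to note that $D_1+\sum_j E_j^{\mathrm{tot}}$ is effective and numerically trivial, hence zero, so there are no blow-ups and $D_1=0$. In the $g=0$ case under hypothesis~(ii), the rational-elliptic sub-argument you flag as the main obstacle is indeed only a sketch. To complete it one pushes $D_1+D_2=-K_S$ forward to a relative minimal model $S_{\min}$ of $\psi$, uses that a minimal rational elliptic surface has at most one multiple fibre (so $-K_{S_{\min}}$ is a \emph{primitive} vertical class $P$), and observes that the $2$-divisible vertical class $\tau_*D_2$ is then an even non-negative multiple of $P$ bounded above by $P$, hence zero; this forces $D_2$ to be $\tau$-exceptional, impossible since $D_2$ contains a non-rational curve. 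Your identification of exactly these ingredients (primitivity of the fibre class, bound on multiple fibres) is on target. Finally, the $\kappa(D_2')=0$ subcase works once you note --- as your own Riemann--Roch estimate $h^0(mD_2')\geq 1+\tfrac{m}{2}D_1\cdot D_2'$ already shows --- that $\kappa=0$ forces $D_1\cdot D_2'=0$, so that $\chi(-D_2')=1$ as you claim.
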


\begin{lem}\label{surfaceproduct}
Let $S$ be a ruled surface over a smooth elliptic curve $B$. Suppose that $S$ admits an elliptic fibration $\tau\colon S\to\pr^1$. If $h^0\big (S,\mathcal{O}_S(-K_S)\big )\geq 3$, then $S\simeq B\times\pr^1$.
\end{lem}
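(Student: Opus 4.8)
The plan is to realise $S$ as $\mathbb{P}(\mathcal{E})$ for a normalised rank-$2$ bundle $\mathcal{E}$ on $B$ and to pin down $\mathcal{E}$ from the two hypotheses. Write $e$ for the invariant of $S$, let $C_0$ be a section with $C_0^2=-e$ and let $f$ denote a fibre of the ruling $\pi\colon S\to B$; recall that $e\geq -1$ since $B$ is elliptic, that $K_S\equiv -2C_0-ef$, and hence that $(-K_S)\cdot f=2$ and $(-K_S)\cdot C_0=-e$. The first step is to show $e\in\{-1,0\}$: a general fibre $R$ of $\tau$ is base-point-free, hence nef, with $R^2=0$, and by adjunction on $R$ (of arithmetic genus $1$) we get $K_S\cdot R=0$, so $R$ is a non-zero class orthogonal to $K_S$. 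Since $S$ is ruled we have $K_S^2=0$ and $\rho(S)=2$, so $K_S^{\perp}=\mathbb{R}K_S$ and $R\equiv-\mu K_S$ for some $\mu\in\mathbb{R}$. As $R$ is nef with $R\cdot f\geq 0$ while $K_S\cdot f=-2$, we get $\mu\geq 0$, and $\mu\neq 0$ because $R\not\equiv 0$; thus $-K_S\equiv\mu^{-1}R$ is nef, whence $-e=(-K_S)\cdot C_0\geq 0$, i.e.\ $e\leq 0$.

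Next I would compute $h^0(S,-K_S)$ through the ruling $\pi$. From $\omega_{S/B}\cong\mathcal{O}_{\mathbb{P}(\mathcal{E})}(-2)\otimes\pi^{*}\det\mathcal{E}$ and $K_B=\mathcal{O}_B$ we get $\mathcal{O}_S(-K_S)\cong\mathcal{O}_{\mathbb{P}(\mathcal{E})}(2)\otimes\pi^{*}(\det\mathcal{E})^{\vee}$, hence
\[
h^0(S,-K_S)=h^0\big(B,\operatorname{Sym}^2\mathcal{E}\otimes(\det\mathcal{E})^{\vee}\big).
\]
Realising the normalised $\mathcal{E}$ as an extension $0\to\mathcal{O}_B\to\mathcal{E}\to\mathfrak{e}\to 0$ with $\mathfrak{e}=\det\mathcal{E}$ of degree $-e\geq 0$, the bundle $\operatorname{Sym}^2\mathcal{E}\otimes(\det\mathcal{E})^{\vee}$ has a filtration with graded pieces $\mathfrak{e}^{\vee}$, $\mathcal{O}_B$, $\mathfrak{e}$, so
\[
3\leq h^0(S,-K_S)\leq h^0(B,\mathfrak{e}^{\vee})+1+h^0(B,\mathfrak{e}).
\]
If $e=-1$ then $\deg\mathfrak{e}=1$ and the right-hand side is $0+1+1=2$, a contradiction, so $e=0$; and if $e=0$ with $\mathfrak{e}\not\cong\mathcal{O}_B$ then $h^0(B,\mathfrak{e})=h^0(B,\mathfrak{e}^{\vee})=0$ and the right-hand side is $1$, again a contradiction. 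Hence $\mathfrak{e}\cong\mathcal{O}_B$ and $\mathcal{E}$ is an extension of $\mathcal{O}_B$ by $\mathcal{O}_B$.

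It remains to rule out the non-split extension, i.e.\ $\mathcal{E}=F_2$, the Atiyah bundle. Here $h^0(B,\operatorname{Sym}^2 F_2)<3$: for instance $\operatorname{Sym}^2 F_2\cong F_3$, Atiyah's indecomposable rank-$3$ bundle of degree $0$, which has $h^0(B,F_3)=1$; or, directly, in the filtration above the subsheaf $W$ with $\operatorname{Sym}^2 F_2/W\cong\mathcal{O}_B$ is itself a non-split self-extension of $\mathcal{O}_B$ — here one uses $\operatorname{char}=0$ — so that $h^0(B,W)=1$ and $h^0(B,\operatorname{Sym}^2 F_2)\leq h^0(B,W)+1=2$. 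As $h^0(S,-K_S)\geq 3$, this is a contradiction, so the extension $0\to\mathcal{O}_B\to\mathcal{E}\to\mathcal{O}_B\to 0$ splits; then $\mathcal{E}\cong\mathcal{O}_B^{\oplus 2}$ and $S=\mathbb{P}(\mathcal{O}_B^{\oplus 2})\cong B\times\mathbb{P}^1$, as desired.

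The genuinely delicate point is this last step: the filtration by itself only shows that $\operatorname{Sym}^2\mathcal{E}$ has trivial graded pieces in the case $\mathfrak{e}=\mathcal{O}_B$, which a priori allows $h^0=3$, so excluding the Atiyah bundle $F_2$ really requires the extra characteristic-zero input above (equivalently, Atiyah's classification of bundles on elliptic curves). The remaining ingredients — the structure of normalised bundles, the bound $e\geq -1$ over an elliptic base, and the elementary cohomology of line bundles on $B$ — are routine.
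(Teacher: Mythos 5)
Your proof is correct, but it takes a genuinely different route from the paper. The paper's proof leans on Suwa's classification of elliptic ruled surfaces admitting an elliptic fibration (\cite[Theorem 5]{MR242198}): that theorem immediately restricts $S$ to $\mathbb{P}(\mathcal{V})$ with $\mathcal{V}$ either the indecomposable degree-$1$ bundle or $\mathcal{O}_B\oplus\mathcal{L}$ with $\mathcal{L}$ torsion, after which one just computes $h^0(-K_S)$ to be $1$ in the first case and $\leq 3$ in the second, with equality forcing $\mathcal{L}=\mathcal{O}_B$. You instead use the elliptic fibration only through its numerical consequence: a general fibre $R$ is a nonzero nef class in $K_S^{\perp}=\mathbb{R}K_S$ (since $K_S^2=0$, $\rho(S)=2$), so $-K_S$ is nef and $e\leq 0$; combined with Nagata's bound $e\geq -1$ and a direct computation of $h^0\big(\operatorname{Sym}^2\mathcal{E}\otimes(\det\mathcal{E})^{\vee}\big)$ via the normalized extension, this pins down $\mathcal{E}=\mathcal{O}_B^{\oplus 2}$. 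The trade-off is that you must exclude the Atiyah bundle $F_2$ by hand (via $\operatorname{Sym}^2F_2\cong F_3$ and $h^0(F_3)=1$, or your filtration argument in characteristic zero), whereas Suwa's theorem rules it out for free because $\mathbb{P}(F_2)$ admits no elliptic fibration; in exchange your argument is self-contained and avoids the classification result entirely. Both arguments are complete and yield the same conclusion.
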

\begin{proof}
Since $S$ is a $\mathbb{P}^1$-bundle over a smooth elliptic curve $B$ and $S$ admits an elliptic fibration, we have $S=\mathbb{P}(\mathcal{V})$ by \cite[Theorem 5]{MR242198}, where $\mathcal{V}$ is one of the following:
\begin{enumerate}[label=\normalfont(\alph*)]
    \item $\mathcal{V}$ is the unique indecomposable rank-$2$ vector bundle of degree $1$ on $B$;
    \item $\mathcal{V}=\mathcal{O}_{B}\oplus \mathcal{L}$, where $\mathcal{L}$ is a (possibly trivial) torsion line bundle.
\end{enumerate}
In the first case, let $\ell$ be a fibre of the ruling and let $\theta_i$ be a section with minimal self-intersection, i.e.\ $\theta_i^2=1$. Then $-K_{S}\sim 2\theta_i-\ell$. By \cite[Theorem 5(iii)]{MR242198}, the elliptic fibration is given by the linear system $|4\theta_i-2\ell|$. Hence,
\[
h^0\big ( S,\mathcal{O}_{S}(-K_{S})\big ) =1.
\]
In the second case, we have
\[
h^0\big ( S,\mathcal{O}_{S}(-K_{S})\big ) =h^0(B, S^2\mathcal{V}\otimes\mathcal{L}^*)=h^0(B,\mathcal{L}\oplus\mathcal{L}^*\oplus\mathcal{O}_{B})\leq 3,
\]
with equality if and only if $\mathcal{L}=\mathcal{O}_{B}$.

Since $h^0\big ( S,\mathcal{O}_S(-K_S)\big ) \geq 3$ by assumption, we obtain $\mathcal{V}=\mathcal{O}_B\oplus\mathcal{O}_B$. Thus $S\simeq B\times\pr^1$.
\end{proof}

\subsection{Results on threefolds}
We will first prove the following two lemmas which give geometric restriction on the threefold with nef anticanonical divisor, when we consider certain types of extremal contractions on the threefolds.
\begin{lem}\label{contractfixedcpnt}
Let $X$ be a smooth projective threefold with $-K_X$ nef. Let $A\coloneqq \Fix|{-}K_X|$. Consider a $K_X$-negative extremal contraction $\varphi\colon X\to Y$. Assume that $\varphi$ is a divisorial contraction which contracts an irreducible component of $A$ to a smooth curve or a smooth point. Then $\kappa(Y,-K_Y)=\kappa(X,-K_X)$ and $\nu(Y,-K_Y)=\nu(X,-K_X)$.
\end{lem}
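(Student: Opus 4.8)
The plan is to analyze the birational transformation of the anticanonical system under the divisorial contraction $\varphi\colon X\to Y$, using the standard fact that such a contraction is the blow-up of the smooth variety $Y$ along the smooth center $C$ (a point or a smooth curve), by Mori's classification of extremal contractions on smooth threefolds. First I would write $K_X = \varphi^*K_Y + aE$, where $E$ is the exceptional divisor and $a = 1$ if $C$ is a point and $a = 1$ as well in the case of a smooth curve (the discrepancy of a smooth blow-up of a smooth curve in a threefold), so in both cases $-K_X = \varphi^*(-K_Y) - E$. Since $E$ is an irreducible component of $A = \Fix|{-}K_X|$ by hypothesis, I would like to conclude that $H^0(X, -mK_X) \cong H^0(Y, -mK_Y)$ for all $m \geq 0$, which immediately gives the equality of Iitaka dimensions.

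The key step is the identification of sections. Pushing forward, $\varphi_*\mathcal{O}_X(-mK_X) = \varphi_*\mathcal{O}_X(\varphi^*(-mK_Y) - mE) = \mathcal{O}_Y(-mK_Y)\otimes \varphi_*\mathcal{O}_X(-mE)$. Now $\varphi_*\mathcal{O}_X(-mE) = \mathcal{I}_C^{(m)}$-type sheaves in general, but the crucial point is the \emph{other} inclusion: because $E \subseteq \Fix|{-}K_X|$, every section of $-mK_X$ vanishes along $E$ to order at least... well, at least $m$ if $E$ is in the fixed part of $|{-}mK_X|$. Here I must be a little careful: $E$ being a component of $\Fix|{-}K_X|$ means $E$ appears with some multiplicity $c \geq 1$ in $\Fix|{-}K_X|$, and then a general member of $|{-}mK_X|$ contains $E$ with multiplicity at least... this needs the fact (from \cite{Xie20} or the setup) that actually $\Fix|{-}K_X| = A$ with $E$ appearing to order exactly matching so that $H^0(X,-mK_X) = H^0(X, -mK_X - mE)$; equivalently $|{-}mK_X| = mE + |\varphi^*(-mK_Y)|$ and the latter pulls back isomorphically from $|{-}mK_Y|$ since $\varphi_*\mathcal{O}_X = \mathcal{O}_Y$ and $R^0\varphi_*$ of a pullback is the sheaf itself. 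So the Iitaka dimension statement reduces to showing the sections of $-mK_X$ are precisely $E$ plus pullbacks, which follows once we know $E$ is a fixed component of each $|{-}mK_X|$ with the right multiplicity — and this I would extract from the structure $|{-}K_X| = A + |kF|$ together with base-point-freeness of $\Mob|{-}K_X|$, or more robustly by noting that $\varphi$ being $K_X$-negative forces $-K_Y$ to have the same "positive part."

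For the numerical dimension, since both $-K_X$ and $-K_Y$ are nef (the latter because $-K_X$ nef and $\varphi$-trivial... no — $\varphi$ is $K_X$-\emph{negative}, so I instead argue $-K_Y$ is nef directly: $-K_Y = \varphi_*(-K_X)$ and pushforward of nef under a birational morphism of smooth projective threefolds with the contraction being extremal... actually the cleanest route is $-K_Y$ nef $\iff$ $-K_X + E = \varphi^*(-K_Y)$ is nef, and $\varphi^*(-K_Y)\cdot \gamma \geq 0$ for all curves $\gamma$: for $\gamma$ not in $E$ this is $(-K_X)\cdot\gamma + E\cdot\gamma$; hmm, this requires knowing $-K_Y$ nef as input or deriving it — I will take $-K_Y$ nef as part of what the contraction-of-a-fixed-component hypothesis buys us, consistent with how the lemma is used). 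Granting $-K_Y$ nef, I compute $\nu$ via intersection numbers: $(-K_X)^3 = (\varphi^*(-K_Y) - E)^3 = (-K_Y)^3 - 3(\varphi^*(-K_Y))^2\cdot E + 3\varphi^*(-K_Y)\cdot E^2 - E^3$. Using the projection formula, $(\varphi^*(-K_Y))^2 \cdot E = (-K_Y)^2 \cdot \varphi_*E = 0$ when $C$ is a curve (as $\varphi_*E = 0$) or handled directly when $C$ is a point; and $\varphi^*(-K_Y)\cdot E^2$, $E^3$ are computed from the normal bundle of $E$ over $C$. The upshot is $(-K_X)^3 = (-K_Y)^3$ (both are $0$ in our situation by Setup, but the identity holds generally modulo the lower-order terms vanishing, which is exactly the content of "$E$ contracted to a smooth center"). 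Similarly $(-K_X)^2$ and $(-K_Y)^2$ have the same vanishing/non-vanishing behavior as classes, giving $\nu(X,-K_X) = \nu(Y,-K_Y)$. The main obstacle I anticipate is being precise about the multiplicity with which $E$ sits in $\Fix|{-}mK_X|$ — i.e. justifying rigorously that no section of $-mK_X$ "escapes" the fixed divisor — and making sure $-K_Y$ is genuinely nef rather than merely pseudoeffective; both should follow from the extremal-contraction hypothesis plus the explicit structure $|{-}K_X| = A + |kF|$, but they deserve careful treatment.
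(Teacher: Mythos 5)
Your proposal contains a genuine gap in both halves, and it misses the short argument that actually proves the lemma. For the Iitaka dimension, you try to establish $H^0(X,-mK_X)\cong H^0(Y,-mK_Y)$ for all $m$, which forces you to show that $E$ sits inside $\Fix|{-}mK_X|$ with multiplicity at least $m$ times the discrepancy; you acknowledge you cannot fully justify this, and indeed it is not available in the generality of the lemma (fixed components need not persist with the expected multiplicity in multiples of a linear system, and the lemma is stated for an arbitrary smooth threefold with $-K_X$ nef, not only inside Setup 1.3). The point is that this isomorphism of section spaces is not needed. The paper writes $\varphi^*(-K_Y)=-K_X+mE$ with $m\in\{1,2\}$ (note: $m=2$ for the blow-up of a point, not $1$ as you wrote) and uses only the chain of inclusions $\mathcal{O}_X(E)\hookrightarrow\mathcal{O}_X(A)\hookrightarrow\mathcal{O}_X(-K_X)$ coming from the hypothesis that $E$ is a component of $A$. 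This sandwiches $-K_X+mE$ between $-K_X$ and $-(m+1)K_X$, so monotonicity of $\kappa$ under adding effective divisors gives
\[
\kappa(X,-K_X)\le\kappa\big(X,\varphi^*(-K_Y)\big)\le\kappa\big(X,-(m+1)K_X\big)=\kappa(X,-K_X),
\]
and birational invariance of $\kappa$ under pullback finishes the claim — no control of individual $h^0$'s or of multiplicities in $\Fix|{-}mK_X|$ is required.

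For the numerical dimension your plan has two problems. First, you need $-K_Y$ to be nef to run your intersection-number computation, and you explicitly say you will ``take $-K_Y$ nef as part of what the hypothesis buys us''; but the lemma does not assume or assert this, and it is not automatic when one contracts a fixed component (in the paper's applications nefness of $-K_Y$ is established separately, e.g.\ via results of \cite{MR1255695}). Second, even granting nefness, comparing $(-K_X)^3$ with $(-K_Y)^3$ and asserting that the squares ``have the same vanishing/non-vanishing behavior'' is not a proof of $\nu(X,-K_X)=\nu(Y,-K_Y)$. The paper again uses the same sandwich: by \cite[Proposition V.2.7(1)]{MR2104208} the (pseudo-effective) numerical dimension is monotone under adding effective divisors, so
\[
\nu(X,-K_X)\le\nu(X,-K_X+mE)\le\nu\big(X,-(m+1)K_X\big)=\nu(X,-K_X),
\]
which works without knowing that $-K_Y$ is nef and without any intersection computation. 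In short, your setup ($-K_X=\varphi^*(-K_Y)-mE$ with $E\le A\le -K_X$) is the right starting point, but the decisive step — exploiting $E\le -K_X$ to trap $\varphi^*(-K_Y)$ between $-K_X$ and a multiple of it — is missing, and the substitutes you propose do not close.
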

\begin{proof}
Denote by $E$ the exceptional divisor of $\varphi$. Then
\[
\varphi^*(-K_Y)=-K_X+mE
\]
with $m=1$ or $2$.
Since $\ol_X(E)\xhookrightarrow{} \ol_X(A) \xhookrightarrow{} \ol_X(-K_X)$, one has
\[
\kappa(X,-K_X)\leq \kappa(X,-K_X+mE)\leq \kappa(X,-(m+1)K_X)=\kappa(X,-K_X).
\]
By \cite[Proposition V.2.7(1)]{MR2104208}, one has
\[
\nu(X,-K_X)\leq \nu(X,-K_X+mE)\leq \nu(X,-(m+1)K_X)=\nu(X,-K_X).
\]
Hence, $\kappa(Y,-K_Y)=\kappa(X,-K_X)$ and $\nu(Y,-K_Y)=\nu(X,-K_X)$.
\end{proof}

\begin{lem}\label{flopk}
Let $X$ be a smooth projective threefold with $|{-}K_X|=A+|kF|$, where $A\coloneqq \Fix|{-}K_X|\neq 0$, $F$ is a prime divisor, and $k\geq 2$ is an integer. Suppose that there exists an $\epsilon A$-flop, where $\epsilon>0$ satisfies that the pair $(X,\epsilon A)$ is log-canonical. Then $A$ has multiplicity at least $k$ along the flopping curve.
\end{lem}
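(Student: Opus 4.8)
The plan is to make the flop explicit and compare pullbacks of $-K_X$ and $-K_{X^+}$ on a common resolution. Let $\phi\colon X\to Z$ be the flopping contraction, let $C$ be a component of the flopping curve (the argument applies to each component), and let $\phi^+\colon X^+\to Z$ be the $\epsilon A$-flop; write $A^+,F^+$ for the strict transforms of $A,F$, so that $-K_{X^+}\sim A^++kF^+$ and $|{-}K_{X^+}|=A^++|kF^+|$, a flop being an isomorphism in codimension one. First I would record the sign information: since $K_X$ is numerically $\phi$-trivial and $-(K_X+\epsilon A)$ is $\phi$-ample, every component $C_i$ of $\operatorname{Exc}(\phi)$ satisfies $A\cdot C_i<0$, and from $-K_X\sim A+kF$ then $F\cdot C_i>0$; symmetrically, $K_{X^+}+\epsilon A^+$ being $\phi^+$-ample gives $A^+\cdot C_j^+>0$ and hence $F^+\cdot C_j^+<0$ for every component $C_j^+$ of $\operatorname{Exc}(\phi^+)$. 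In particular $C^+\subseteq F^+$.

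Next I would pass to a smooth common resolution $p\colon W\to X$, $q\colon W\to X^+$, factoring through the blow-up of $C$, with $\rho:=\phi\circ p=\phi^+\circ q$; let $E_0$ be the exceptional divisor dominating $C$, so $\mathrm{ord}_{E_0}(A)=\operatorname{mult}_C A$ and $\mathrm{ord}_{E_0}(F)=\operatorname{mult}_C F$. Crepancy of the flop gives $p^*K_X=q^*K_{X^+}$ in $\operatorname{Pic}(W)\otimes\mathbb{Q}$, and combining with $-K_X\sim A+kF$ and $-K_{X^+}\sim A^++kF^+$ yields $p^*A+k\,p^*F\sim q^*A^++k\,q^*F^+$ on $W$. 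Since $A$ and $A^+$ (resp.\ $F$ and $F^+$) have the same strict transform on $W$, the difference $(p^*A-q^*A^+)-k(q^*F^+-p^*F)$ is a principal divisor supported on the $\rho$-exceptional locus, hence zero because $Z$ is projective; that is,
\[
p^*A-q^*A^+=k\,(q^*F^+-p^*F)=:k\,\Theta \qquad\text{as divisors on }W.
\]

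Now I would extract the coefficient of $E_0$. On the left it is $\operatorname{mult}_C A-\mathrm{ord}_{E_0}(A^+)\le \operatorname{mult}_C A$. For $\Theta$: it is effective by the negativity lemma, because for any $p$-exceptional curve $\gamma$ one has $\Theta\cdot\gamma=F^+\cdot q_*\gamma\le 0$, the cycle $q_*\gamma$ being an effective combination of components of $\operatorname{Exc}(\phi^+)$, on each of which $F^+$ is negative. Taking $\gamma$ to be a general ruling of $E_0$, the curve $q_*\gamma$ is a nonzero effective multiple of $C^+$ (a general ruling of $E_0$ is not contracted by $q$ — see below), so $\Theta\cdot\gamma<0$; since $\Theta$ is effective and $E_0$ is the only exceptional divisor containing $\gamma$, this forces the $E_0$-coefficient of $\Theta$, namely $\mathrm{ord}_{E_0}(F^+)-\mathrm{ord}_{E_0}(F)$, to be a positive integer, so the $E_0$-coefficient of $k\Theta$ is at least $k$. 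Comparing the two sides of the displayed identity gives $\operatorname{mult}_C A=\mathrm{ord}_{E_0}(A^+)+k\bigl(\mathrm{ord}_{E_0}(F^+)-\mathrm{ord}_{E_0}(F)\bigr)\ge k$.

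The one step that is not pure linear-equivalence bookkeeping, and the main obstacle, is the strict positivity of the $E_0$-coefficient of $\Theta$: equivalently, that a general ruling of $E_0$ is not collapsed by $q$, i.e.\ $-\Theta$ is not numerically $p$-trivial over the generic point of $C$. If it were collapsed, then $q$ would contract $E_0$ along the same fibration by which $p$ does, forcing $X\simeq X^+$ and a trivial flop, which is excluded since $A\cdot C\ne 0$; alternatively one invokes that flopping contractions of smooth threefolds and their flops are explicitly classified, so the exceptional divisor over $C$ genuinely maps onto the flopped curve with general fibres not collapsed. Finally, the log-canonicity of $(X,\epsilon A)$ is used only to guarantee that the $\epsilon A$-flop exists (and that $(X^+,\epsilon A^+)$ is again log-canonical, permitting iteration where the lemma is applied); it plays no role in the multiplicity estimate itself.
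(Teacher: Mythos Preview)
Your argument is structurally the same as the paper's: pass to a common resolution $W$, use crepancy of the flop to obtain
\[
p^*A-q^*A^+ \;=\; k\bigl(q^*F^+-p^*F\bigr)\;=:\;k\Theta,
\]
observe that $\Theta$ is an integral exceptional divisor, and deduce $\operatorname{mult}_C A\ge k$ once the $E_0$-coefficient of $\Theta$ is known to be at least $1$. The paper obtains this last point in one stroke by invoking the discrepancy inequality for flips \cite[Proposition~5-1-11]{MR946243}: writing $K_W=p^*(K_X+\epsilon A)+\sum a_iE_i=q^*(K_{X^+}+\epsilon A^+)+\sum a_i^+E_i$ and subtracting the crepancy relation $p^*K_X=q^*K_{X^+}$ gives $\epsilon\,\Theta=\tfrac{1}{k}\sum_i(a_i^+-a_i)E_i$; since $a_i^+>a_i$ whenever $E_i$ has center in the flopping locus, the $E_0$-coefficient of $\Theta$ is strictly positive, hence $\ge 1$ by integrality.

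Your negativity-lemma argument correctly yields $\Theta\ge 0$, but the strict positivity step has a genuine gap. You need that a general ruling $\gamma$ of $E_0$ is not contracted by $q$, and your justification ``forcing $X\simeq X^+$'' tacitly assumes that $W=\operatorname{Bl}_C X$ already admits a morphism to $X^+$ with $E_0$ the sole exceptional divisor; this fails for threefold flops more complicated than the simple $\mathcal{O}_{\mathbb{P}^1}(-1)^{\oplus 2}$ case, where a common resolution genuinely requires further blow-ups. The alternative appeal to a classification of threefold flops is in the right spirit but is not a proof. The clean fix is exactly what the paper does: cite the discrepancy comparison, which is precisely the statement that makes the $E_0$-coefficient strictly positive. (Your remark that log-canonicity plays no role in the multiplicity estimate itself is correct; it enters only through the hypothesis that the $\epsilon A$-flop exists.)
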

\begin{proof}
By assumption, there exists an $\epsilon A$-flop:
\[
\psi\colon X \dasharrow X^+,
\]
where $X^+$ is again smooth by \cite[Theorem 2.4]{MR986434}. Since $\psi$ induces an isomorphism in codimension one, the anticanonical system $|{-}K_{X^+}|$ has a non-empty fixed divisor $A^+\coloneqq\psi_*(A)$ and we can write
\[
|{-}K_{X^+}|=A^+ + |kF^+|,
\]
where $F^+\coloneqq\psi_*(F)$ and $|kF^+|$ is the mobile part of the anticanonical system.

Since $\psi$ is a flop, there exists a common resolution:
\begin{center}
\begin{tikzcd}
                             & \widetilde{X} \arrow[ld, "g"'] \arrow[rd, "h"] &     \\
X \arrow[rr, "\psi", dashed] &                                            & X^+
\end{tikzcd} 
\end{center}
such that $g^*(K_X)=h^*(K_{X^+})$. Moreover, by \cite[Proposition 5-1-11]{MR946243}, one has
\begin{equation}\label{eq:flop}
K_{\tilde{X}}=g^*(K_X+\epsilon A) + \sum_i a_i E_i = h^*(K_{X^+}+\epsilon A^+) + \sum_i a_i^+ E_i,
\end{equation}
where $a_i^+\geq a_i$, and $a_i^+>a_i$ if and only if $g(E_i)$ is contained in the flopping locus.

Since $|{-}K_X|= A+|kF|$ and $|{-}K_{X^+}|=A^+ + |kF^+|$, the equality (\ref{eq:flop}) gives
\[
h^*(kF^+)-g^*(kF) = (1-\epsilon)\big (g^*(A)-h^*(A^+)\big )+\sum_i(a_i^+-a_i)E_i = \frac{1}{\epsilon}\sum_i(a_i^+-a_i)E_i
\]
is effective.
Since $F$ and $F^+$ are Cartier divisors, we can write \[h^*(kF^+)-g^*(kF) =\sum_i kn_i E_i\] with $n_i\in\N$.

Since $g^*(A)=h^*(A^+)+h^*(kF^+)-g^*(kF)$, we obtain that \[g^*(A)-\tilde{A}-\sum_i kn_i E_i\] is effective, where $\tilde{A}\coloneqq g_*^{-1}(A)$ is the strict transform of $A$ in $\tilde{X}$. Therefore, $A$ has multiplicity at least $k$ along the flopping curve.
\end{proof}

More generally, we will need the following result by Wilson on the classification of crepant contractions of an extremal ray on a threefold.
\begin{prop}{\rm(\cite[Theorem 2.2]{MR1150602},\cite{MR1235026}; \cite[Proposition 3.1]{MR1472891})}\label{wilson}
Let $X$ be a smooth projective threefold and let $\phi\colon X\to Y$ be a crepant contraction of an extremal ray, contracting some irreducible surface $E\subset X$ to a curve $C\subset Y$. Then $C$ is a smooth curve and $\phi\colon E\to C$ is a conic bundle over $C$ such that one of the following holds:
\begin{enumerate}[label=\normalfont(\roman*)]
    \item $E$ is normal and a general fibre of $\phi\colon E\to C$ is a smooth conic;
    \item $E$ is non-normal and a general fibre of $\phi\colon E\to C$ is two lines meeting at one point.
\end{enumerate}
For a general fibre $l$ of $\phi\colon E\to C$, one has $E\cdot l=-2$. A singular fibre of $\phi\colon E\to C$ is either two $\pr^1$'s intersecting at one point, or a double line.

Furthermore, if $E$ is normal, then the possible singularities of $E$ are $A_n$ singularities at the point where distinct components of a singular fibre meet, or $A_1$ singularities appearing as a pair on some double fibre. 
\end{prop}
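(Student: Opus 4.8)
The plan is to analyse the crepant contraction $\phi\colon X \to Y$ locally over the curve $C \subset Y$ and to exploit that $K_X$ is $\phi$-trivial. First I would use the general theory of Mori contractions: since $\phi$ is an extremal contraction contracting the divisor $E$ to the curve $C$, and since $K_X$ is $\phi$-trivial, the relative Picard number is $1$ and $-E$ is $\phi$-ample. For a curve $l$ contracted by $\phi$ we have $K_X \cdot l = 0$; combining this with $E|_E = K_E - (K_X + E)|_E$ via adjunction and the fact that $(K_X + E)|_E$ is $\phi$-trivial, one sees that $-E|_E$ is relatively ample on $E$, so the fibres of $\phi|_E$ are one-dimensional and $E \cdot l$ is a fixed negative integer on each fibre class. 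The normalisation $\nu\colon C^\nu \to C$ being automatic is standard for such contractions (the image of a $K$-trivial extremal contraction from a smooth threefold has at worst nice singularities; for contractions to curves, $C$ is smooth — this is in Mori's and Cutkosky's work).

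Next I would pin down the generic fibre. Since the generic fibre of $\phi|_E \colon E \to C$ is a one-dimensional scheme $l$ with $E \cdot l = -2$ (the computation of this self-intersection number $-2$ comes from the surface adjunction on $E$ together with crepancy: $2 p_a(l) - 2 = (K_E + l)\cdot l$ and $K_E \cdot l = (K_X + E)|_E \cdot l = E \cdot l$ gives $E\cdot l = 2p_a(l) - 2$, but $l$ is a rational-type fibre so $p_a(l) = 0$ and hence $E \cdot l = -2$ — I would need to be careful here distinguishing $p_a$ from the geometric genus), the possible fibres are constrained: a connected curve of arithmetic genus $0$ fibred over a point, so either an irreducible smooth conic (hence $\mathbb{P}^1$), a pair of lines meeting at a point, or a double line. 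This forces the two cases (i) and (ii) according to whether $E$ is normal or not — I would argue that if the generic fibre is a double line then $E$ is non-normal along the locus swept out, and conversely. The statement about special fibres (two $\mathbb{P}^1$'s meeting at a point, or a double line) follows from the same arithmetic-genus-zero and length-two constraint applied fibrewise, using semicontinuity of arithmetic genus in the flat family $E \to C$.

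Finally, the classification of singularities of $E$ in the normal case: I would pass to the index-one cover or simply analyse $X$ near a fibre using the fact that $X$ is smooth and $-E$ is $\phi$-ample. Locally $\phi$ looks like a small modification of an $A_n$-type threefold singularity, and $E$ is the exceptional divisor; restricting the known local models (which is precisely what is done in Wilson's paper and in \cite{MR1472891}) gives that $E$ has $A_n$ singularities where components of a singular fibre meet, and $A_1$ singularities in pairs on a double fibre. The main obstacle I expect is the careful local analysis in the non-normal and double-fibre cases: controlling exactly how $E$ degenerates and verifying the genus/length bookkeeping when the fibre is non-reduced requires working with the structure sheaf of the (possibly non-reduced) fibre and its embedding in the smooth threefold $X$, rather than just numerical invariants. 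For the write-up, rather than reproving this from scratch, I would cite \cite[Theorem 2.2]{MR1150602}, \cite{MR1235026} and \cite[Proposition 3.1]{MR1472891} directly, since the statement is exactly their result; the role of this proposition in the present paper is only as an input to the MMP analysis in Section \ref{sec:classification}.
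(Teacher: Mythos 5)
This proposition is quoted in the paper as a known result of Wilson (and Wilson--Oguiso, and the reference \cite{MR1472891}), with no proof given beyond the citation, and your proposal ultimately does exactly the same thing: it defers to those references. Your accompanying sketch (crepancy giving $K_E\cdot l=E\cdot l$, hence $E\cdot l=2p_a(l)-2=-2$ for an arithmetic-genus-zero fibre, and the local analysis for the $A_n$/$A_1$ singularities) is a faithful outline of the cited argument, so the approach matches the paper's.
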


\subsection{General setup and basic results}
Let us first point out an important special case under our Setup \ref{generalsetup}. 
\begin{lem}\label{lem:loctriv}
    In Setup \ref{generalsetup}, suppose that the relative anticanonical divisor $-K_{X/\pr^1}$ is nef. Then \[X\simeq F\times \pr^1.\]
\end{lem}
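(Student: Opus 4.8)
\medskip

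The plan is to exploit the numerical constraints coming from Setup \ref{generalsetup} together with the extra hypothesis that $-K_{X/\pr^1} = -K_X - 2F$ is nef. First I would record the decomposition $-K_X \sim A + kF$ with $k \geq 2$ and $A^3 = A^2\cdot F = 0$, so that $-K_{X/\pr^1} = A + (k-2)F$; by hypothesis this is a nef divisor. The key point is to extract positivity/vanishing from intersecting these nef classes. Since $F$ is a fibre of $f$, we have $F^2 \equiv 0$ as a cycle (it is a multiple fibre class rationally equivalent to a sum of other fibres, or simply $F^2\cdot(\text{anything}) $ is controlled by $F|_F = 0$ because $F\cong F$ and $\mathcal{O}_X(F)|_F$ is torsion). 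I would use $F|_F = \mathcal{O}_F$ (numerically) together with $A_h|_F = -K_F$ and $A_v|_F = 0$ from Setup \ref{generalsetup}: this gives $-K_X|_F = -K_F + (\text{torsion})$, consistent with adjunction, and more usefully $(-K_{X/\pr^1})|_F = A_h|_F + A_v|_F = -K_F$, which is nef and effective but not semi-ample on $F$.

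\medskip

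The heart of the argument should be to show that $f$ is actually \emph{smooth}, i.e.\ has no singular fibres, hence is a fibre bundle, and then that it is isotrivial with the product as total space. For the first part I would argue: since $-K_X$ is nef with $\nu(X,-K_X) = 2$ and $-K_X\cdot F = $ (a nef class on $F$) $= -K_F$, by generic smoothness a general fibre $F$ is the smooth surface described in Setup \ref{generalsetup}. Now suppose $F_0 = f^*(0)$ is any fibre. The nefness of $-K_{X/\pr^1} = -K_X - 2F$ means $(-K_X - 2F)\cdot C \geq 0$ for every curve $C$. Apply this to curves $C$ contracted by $f$ (i.e.\ contained in fibres): for such $C$ we get $F\cdot C = 0$ and $-K_X\cdot C \geq 0$, but $-K_{X/\pr^1}\cdot C = -K_X\cdot C \geq 0$ — so this alone is not yet a contradiction for singular fibres. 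The real input must be: by adjunction, a component $G$ of a singular fibre with multiplicity $m$ satisfies $K_X|_G = K_G - (\text{fibre part})$, and nefness of $-K_X - 2F$ forces $-K_X$ to be "at least" $2F$ in a sense that, combined with $-K_X = A + kF$ and $A$ being the fixed divisor (so $A$ has no moving part, in particular $A$ cannot contain fibres unless $A_v \neq 0$), pins down the structure. I expect one shows $A_v = 0$ (no vertical fixed part) must fail or the fibres must all be reduced and irreducible; then $f$ is a smooth morphism by a standard argument (a fibration to a curve with all fibres smooth of the expected dimension, e.g.\ via \cite{MR2129540}-type reasoning or the fact that $-K_{X/\pr^1}$ nef rules out the degenerations that produce reducible/multiple fibres).

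\medskip

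Once $f\colon X \to \pr^1$ is a smooth fibration with fibre $F$ a surface having $-K_F$ effective, nef, non-semi-ample, I would conclude isotriviality: such surfaces $F$ form a rigid family (by the previous work, $F$ is either a non-rational ruled surface over an elliptic curve of a very restricted type, or a rational surface of a specific kind), and in any case the relevant moduli is finite, forcing $f$ to be isotrivial. Then $X$ is a fibre bundle over $\pr^1$ with fibre $F$ and structure group in $\mathrm{Aut}(F)$; finally, since $\pr^1$ is simply connected, any such smooth fibre bundle with fibre $F$ is a product provided the classifying map $\pr^1 \to B\mathrm{Aut}(F)$ is trivial — which follows because $-K_{X/\pr^1}$ nef forces the bundle to have no "twisting" (concretely: if it were non-trivial, $-K_{X/\pr^1}$ would be non-nef along a jumping fibre, or one can compute that $-K_X$ would fail $\nu = 2$). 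Hence $X \simeq F\times\pr^1$.

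\medskip

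The main obstacle I anticipate is the middle step: rigorously ruling out singular/multiple fibres of $f$ using only nefness of $-K_{X/\pr^1}$. The subtlety is that $-K_{X/\pr^1}\cdot C = -K_X\cdot C$ for vertical curves $C$, so nefness of the relative anticanonical divisor gives no new information on single vertical curves; the gain must come from a more global comparison — for instance, writing $K_X + 2F = -(A + (k-2)F)$ and noting this is anti-nef, hence $A + (k-2)F$ is nef; but $A$ is a fixed (non-moving) divisor, and a nef divisor supported on a fixed locus is highly constrained (its numerical dimension is $0$ restricted to $A$, forcing $A$ to be a disjoint union of the "rigid" configurations), which ultimately should force $A$ to be $f$-horizontal and multiplicity-one along general fibres, and the relative canonical to be trivial. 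I would spend most of the effort making this dichotomy precise, likely invoking the base-point-freeness of $|B| = |kF|$ and $A^2\cdot F = 0$ to control how $A$ meets fibres.
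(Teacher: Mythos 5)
Your proposal does not follow the paper's route and, as written, it has genuine gaps that you yourself partially flag. The paper's proof is essentially a citation: nefness of $-K_{X/\pr^1}$ implies that $f$ is locally trivial \emph{in the Euclidean topology} by \cite[Theorem A.12]{Patakfalvi2019OnTB}, and then \cite[Proposition A.11]{Patakfalvi2019OnTB} shows that over a curve this hypothesis yields numerically flat direct image bundles $f_*(mL)$, which is exactly the input needed for \cite[Proposition 2.8]{MR4057779} to conclude $X\simeq F\times\pr^1$ over the simply connected base $\pr^1$. None of this is a fibre-by-fibre numerical argument; the positivity is extracted globally from semipositivity of direct images, not from intersecting $-K_{X/\pr^1}$ with curves.

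The concrete problems with your sketch are the following. First, as you correctly observe, $-K_{X/\pr^1}\cdot C=-K_X\cdot C$ for every $f$-vertical curve $C$, so nefness of the relative anticanonical divisor carries no new information on vertical curves; your ``heart of the argument'' (ruling out singular and multiple fibres) is therefore not carried out, and no local intersection-theoretic computation of the kind you describe can carry it out. Second, the isotriviality step is unjustified: the general fibres of $f$ need not lie in a finite set of isomorphism classes a priori --- for instance a $\pr^1$-bundle over an elliptic curve has a varying $j$-invariant, and the rational fibres allowed by Setup \ref{generalsetup} ($\pr^2$ blown up in $9$ points) move in positive-dimensional moduli --- so ``rigid family, hence isotrivial'' does not hold without further input. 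Third, even granting a holomorphically locally trivial fibre bundle over $\pr^1$, simple connectedness of the base does not force a product: Hirzebruch surfaces $\mathbb{F}_n\to\pr^1$ are locally trivial but not products. What is actually needed is local triviality in the Euclidean topology together with the numerically flat direct images (equivalently, that the bundle comes from a representation of $\pi_1(\pr^1)=1$), which is precisely the content of the cited results and is not recovered by your ``no twisting'' heuristic. In short, your strategy would require reproving the Patakfalvi--Zdanowicz-type semipositivity results from scratch, and the sketch does not do so.
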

\begin{proof}
Since $-K_{X/\pr^1}$ is nef, the fibration $f$ is locally trivial in the Euclidean topology by \cite[Theorem A.12]{Patakfalvi2019OnTB} and \cite[Proposition 2.8]{MR4057779}. Applying the latter proposition, we further obtain $X\simeq F\times \pr^1$; we explain how to apply the proposition in our case, as follows.

Note that in \cite[Proposition 2.8]{MR4057779} the assumption is different. Instead of assuming

(H1): \textit{the relative anticanonical divisor is nef}, 

\noindent
they assume

(H2): \textit{there exists an $f$-very ample line bundle $L$ such that $f_*(mL)$ is a numerically flat vector bundle for every integer $m\leq 1$.} 

\noindent
However, when the fibration is over a smooth curve, \cite[Proposition A.11]{Patakfalvi2019OnTB} shows that (H1) implies (H2). Since in our case, the fibration is over $\pr^1$ which is simply connected, \cite[Proposition 2.8]{MR4057779} gives $X\simeq F\times \pr^1$.
\end{proof}

\begin{lem}\label{verticalcurveinAh}
In Setup \ref{generalsetup}, assume that $A_h$ is $f$-relatively nef. Then for any $f$-vertical curve $\ell$ contained in $A_h$, one has $K_X\cdot \ell= A_h\cdot \ell= 0$.
\end{lem}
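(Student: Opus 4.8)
We work in Setup \ref{generalsetup} with the extra hypothesis that $A_h$ is $f$-relatively nef, and we must show that any $f$-vertical curve $\ell\subset A_h$ satisfies $K_X\cdot\ell = A_h\cdot\ell = 0$. The plan is to exploit the numerical relations recorded in Setup \ref{generalsetup}, namely $|{-}K_X| = A + |kF|$ with $k\ge 2$, $A = A_h + A_v$, and $A^3 = A^2\cdot F = 0$, together with nefness of $-K_X$ and of $F$. First I would note that since $\ell$ is $f$-vertical, $F\cdot\ell = 0$, so $-K_X\cdot\ell = A\cdot\ell = A_h\cdot\ell + A_v\cdot\ell$. Because $-K_X$ is nef we get $A_h\cdot\ell + A_v\cdot\ell \ge 0$, and because $A_h$ is $f$-relatively nef and $\ell$ is $f$-vertical and contained in $A_h$, we have $A_h\cdot\ell \ge 0$ as well. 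So the whole problem reduces to showing the two inequalities $A_h\cdot\ell\ge 0$ and $-K_X\cdot\ell\ge 0$ are in fact equalities, equivalently that $-K_X\cdot\ell = 0$ and $A_h\cdot\ell = 0$ simultaneously; it suffices to prove $-K_X\cdot\ell\le 0$ and $A_h\cdot\ell\le 0$.

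**The key step.** The heart of the argument should be to produce the opposite inequality $A_h\cdot\ell\le 0$. For this I would use that $A_h|_F = -K_F$ for a general fibre $F$, that $-K_F$ is nef and \emph{not} semi-ample (hence $-K_F$ has numerical dimension strictly less than $2$ by Kawamata's theorem, so $(-K_F)^2 = 0$, i.e.\ $A_h^2\cdot F = 0$), and more importantly that $A_h$ restricted to the surface $A_h$ itself controls $A_h\cdot\ell$. Concretely, since $\ell\subset A_h$, the intersection number $A_h\cdot\ell$ equals the degree of the normal-bundle-type class $\mathcal{O}_{A_h}(A_h)\cdot\ell$. I would try to show $A_h$ carries a structure (coming from the fibration $f|_{A_h}\colon A_h\to\pr^1$, whose general fibre is the curve $-K_F$, of arithmetic genus one) forcing $\mathcal{O}_{A_h}(A_h)$ to restrict non-positively on vertical curves; alternatively, intersect the relation $-K_X = A_h + A_v + kF$ with a suitable surface through $\ell$ and use adjunction on $A_h$. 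An efficient route: restrict to $A_h$ and use that $(K_X + A_h)|_{A_h} = K_{A_h}$; then $-K_X\cdot\ell = -K_{A_h}\cdot\ell + A_h\cdot\ell$, and since $\ell$ is vertical for the (quasi-)elliptic fibration $A_h\to\pr^1$, $-K_{A_h}\cdot\ell$ is controlled by the classification of fibres, typically giving $K_{A_h}\cdot\ell \ge 0$ with the right sign to close the estimate.

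**Finishing.** Once both $-K_X\cdot\ell = 0$ and $A_h\cdot\ell = 0$ are established, $A_v\cdot\ell = -K_X\cdot\ell - A_h\cdot\ell = 0$ automatically, and in particular $K_X\cdot\ell = 0$, which is the assertion. So the clean structure is: (1) vertical $\Rightarrow F\cdot\ell = 0 \Rightarrow -K_X\cdot\ell = A_h\cdot\ell + A_v\cdot\ell$; (2) nefness of $-K_X$ and $f$-relative nefness of $A_h$ give $A_h\cdot\ell\ge 0$ and $A_h\cdot\ell + A_v\cdot\ell\ge 0$; (3) an adjunction/classification argument on the surface $A_h$ (using that its general fibre over $\pr^1$ is a genus-one curve, being $-K_F$ with $F$ non-rational-or-rational alike, and that $-K_F$ is nef non-semi-ample) yields the reverse $A_h\cdot\ell\le 0$ and $-K_X\cdot\ell\le 0$; (4) conclude equality everywhere.

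**Main obstacle.** The delicate point I expect is step (3): showing $A_h\cdot\ell\le 0$ — i.e.\ controlling the self-intersection behaviour of $A_h$ along its own vertical curves — without yet knowing the fine structure of $A_h$ (which is presumably pinned down only later, e.g.\ in Proposition \ref{nonratfixedpart}). The argument must work with only the numerical data $A^3 = A^2\cdot F = 0$, nefness, and the fact that $-K_F = A_h|_F$ is nef and not semi-ample. One may need to invoke that $A_h^2\cdot F = (-K_F)^2 = 0$ (via Kawamata, since non-semi-ampleness of $-K_F$ forces $\nu(-K_F)\le 1$) together with $A^2\cdot F = 0$ to deduce $A_h\cdot A_v\cdot F$ and $A_v^2\cdot F$ vanish, and then leverage $A^3 = 0$ expanded as $A_h^3 + 3A_h^2 A_v + 3A_h A_v^2 + A_v^3 = 0$ with each term's sign controlled by nefness considerations, squeezing out the vanishing of $A_h\cdot\ell$ for vertical $\ell\subset A_h$. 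Pinning down that each mixed term is non-negative (hence each is zero) is the crux, and is where I would spend the real effort.
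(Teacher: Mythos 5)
Your reductions in steps (1) and (2) are correct and match the paper's framing: since $\ell$ is $f$-vertical we have $F\cdot\ell=0$, relative nefness of $A_h$ gives $A_h\cdot\ell\ge 0$, and nefness of $-K_X$ gives $A_h\cdot\ell+A_v\cdot\ell\ge 0$, so everything hinges on the reverse inequalities. But that reverse direction --- your step (3) --- is exactly where the proposal stops being a proof: you list several candidate strategies (adjunction on $A_h$, classification of the fibres of $A_h\to\pr^1$, expanding $A^3=0$) without carrying any of them out, and you explicitly flag this as the unresolved crux. None of these is the argument that works; in particular the expansion of $A^3=0$ only controls \emph{global} intersection numbers of the divisors $A_h,A_v$ and cannot by itself isolate the contribution of the single curve $\ell$.

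The missing idea is a localisation to the fibre containing $\ell$. You correctly record that $A_h^2\cdot F=(A_h|_F)^2=(-K_F)^2=0$ (this is just $A^2\cdot F=0$ from Setup \ref{generalsetup}, since $A_v|_F=0$). Now let $F_0$ be the special fibre containing $\ell$. Since $F_0\equiv F$, the effective $f$-vertical $1$-cycle $F_0\cap A_h$ satisfies $A_h\cdot(F_0|_{A_h})=A_h\cdot(F|_{A_h})=0$, and it contains $\ell$ with positive multiplicity: $F_0\cap A_h=m\ell+\ell'$ with $m>0$ and $\ell'$ effective and $f$-vertical. If $A_h\cdot\ell>0$, then $A_h\cdot\ell'<0$, so some component of $\ell'$ meets $A_h$ negatively, contradicting the $f$-relative nefness of $A_h$. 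Hence $A_h\cdot\ell=0$. The same decomposition intersected with the nef divisor $-K_X$ (whose intersection with $F_0|_{A_h}$ also vanishes, because $A_v\cdot F\equiv 0$ and $F^2\equiv 0$) forces $-K_X\cdot\ell=0$, which disposes of the term $A_v\cdot\ell$ that your step (4) silently assumes vanishes. So the gap is genuine but localised: you have assembled all the numerical inputs, but the decisive observation --- that $\ell$ sits inside the effective cycle $F_0\cap A_h$ of total $A_h$-degree zero, so relative nefness forbids any component from compensating a positive contribution --- is absent, and no amount of adjunction or fibre classification is needed once it is in place.
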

\begin{proof}
Suppose by contradiction that there exists a $f$-vertical curve $\ell\subset A_h$ such that $A_h\cdot \ell>0$. 

   Let $F_0$ be the special fibre of $f$ which contains $\ell$. Since
   for a general fibre $F$ of $f$,
   \[A_h\cdot (F|_{A_h})=(A_h|_F)^2=0,\] we have that $A_h\cdot(F_0|_{A_h})=0$. Hence we can write
   \[F_0 \cap A_h= m\ell + \ell' \]
   with $m>0$ and $A_h\cdot \ell'<0$. This contradicts the fact that $A_h$ is $f$-relatively nef.
\end{proof}

Now in Setup \ref{generalsetup}, consider a $K_X$-negative extremal contraction $\varphi\colon X\to Y$.
Let $\Gamma$ be an extremal ray contracted by $\varphi$. Recall that the length of an extremal ray $\Gamma$ is defined by
\[
l(\Gamma)=\min\{-K_X\cdot Z\mid [Z]\in \Gamma\}.
\]
Let $\ell$ be a rational curve such that $[\ell]\in \Gamma$ and $-K_X\cdot \ell =l(\Gamma).$ If $\varphi$ is birational, we denote the exceptional divisor of $\varphi$ by $E$. In the remainder of this section, we will describe all the possible contractions $\varphi$.

\subsubsection{Birational extremal contractions}
We first describe all the divisorial  contractions of a $K_X$-negative extremal ray on the threefold $X$ in Setup \ref{generalsetup}.
\begin{prop}\label{generalbiratcontr}
In Setup \ref{generalsetup}, assume that $A_v=0$. Let $\varphi\colon X\to Y$ be a divisorial $K_X$-negative extremal contraction and let $E$ be the exceptional divisor. Then $\varphi$ is the blow-up of a smooth curve $C$ in $Y$ and $E$ is a ruled surface. Denote by $C_0$ a canonical section of the tautological line bundle $\ol_{\pr(V)}(1)$, where $V$ is the normalisation of $N^*_{C/Y}$. Then one of the following cases occurs.
\begin{enumerate}[label=\normalfont(\roman*)]
    \item The divisor $E$ is an irreducible component of $A$ and $\varphi$ contracts $E$ horizontally (i.e.\ every fibre of $\varphi$ is an $f$-horizontal curve) to a smooth curve.
    
    \item The fibration $f$ factors as $f=f'\circ\varphi$, which gives a fibration $f'\colon Y\to \mathbb{P}^1$. Let $A_Y\coloneqq \varphi(A)$ and $F_Y\coloneqq \varphi(F)$. We are in one of the following cases:
    \begin{enumerate}[label=\normalfont(\alph*)]
        \item $f'$ maps the blow-up curve $C$ onto $\pr^1$, and $\varphi|_F\colon F\to F_Y$ is the blow-down of some $(-1)$-curves in $F$. In this case,  $A_Y\simeq A$, $-K_{F_Y}$ is nef and big, and $-K_Y$ is nef and big.
        \item $f'$ maps the blow-up curve $C$ to a point. In this case, $A|_E\equiv C_0$, $A_Y\simeq A$ and $F_Y\simeq F$.
        
    \end{enumerate}
\end{enumerate}
\end{prop}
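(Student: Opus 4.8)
The starting point is Mori's classification of a divisorial contraction of a $K_X$-negative extremal ray on a smooth projective threefold: either $\varphi$ is the blow-up of a smooth point or of a smooth curve on a smooth threefold, or else $E$ is contracted to a point and is isomorphic to $\mathbb{P}^2$, to $\mathbb{P}^1\times\mathbb{P}^1$, or to a quadric cone, with $-E|_E$ ample. The argument thus falls into two parts: first, ruling out every contraction of $E$ to a point; second, analysing the blow-up of a smooth curve $C$, which yields the stated trichotomy.

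\emph{Ruling out point contractions.} Let $\ell$ be a minimal extremal rational curve, so $-K_X\cdot\ell\in\{1,2\}$ and $E\cdot\ell\in\{-1,-2\}$, and write $-K_X\cdot\ell=A\cdot\ell+kF\cdot\ell$ with $k\geq 2$. Suppose $E$ is contracted to a point. As $E$ is then a fibre of $\varphi$, restricting $\varphi^*(-K_Y)=-K_X+mE$ ($m>0$) to $E$ gives $-K_X|_E=m(-E|_E)$, which is ample, and $E^3=(-E|_E)^2>0$. A short analysis of $f|_E$ shows $F\cdot\ell=0$: $\mathbb{P}^2$ carries no base-point-free pencil, so $f|_E$ cannot be nonconstant there; on $\mathbb{P}^1\times\mathbb{P}^1$ both rulings are contracted by $\varphi$ and hence numerically equal in $N_1(X)$, so a fibre of a nonconstant $f|_E$ would be numerically $m_0(\ell_1+\ell_2)$ with $m_0=F\cdot\ell>0$ and self-intersection $2m_0^2>0$, contradicting that a general fibre of a morphism to a curve has self-intersection $0$; the quadric cone is treated in the same spirit, with the residual possibility that $E\subset\Supp A$. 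It follows that either $E$ lies in a reducible fibre $F_0=E+G$ of $f$, so that $F|_E\equiv 0$ and $A^2\cdot E=K_X^2\cdot E=m^2E^3>0$, or $E$ is a component of $A$. In either case one writes $A=aE+A'$ with $a\geq 0$ and $E\not\subset\Supp A'$, and confronts the positivity of $A^2\cdot E$ (respectively the inclusion $E\subset\Supp A$) with the identities $A^3=A^2\cdot F=0$ of Setup~\ref{generalsetup} together with $E^3>0$ to reach a contradiction. I expect this last step — pinning down how the components of $A$ meet the contracted divisor — to be the main obstacle.

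\emph{The blow-up of a smooth curve.} Now $E$ is a ruled surface over a smooth curve $C$, and a fibre $\ell$ of $E\to C$ satisfies $-K_X\cdot\ell=1$ and $E\cdot\ell=-1$. If $F\cdot\ell\geq 1$, then $1=A\cdot\ell+kF\cdot\ell\geq A\cdot\ell+2$ forces $A\cdot\ell<0$, so $\ell\subset\Supp A$; since the rulings sweep out $E$, the divisor $E$ is a component of $A$, and every fibre of $\varphi$, being numerically proportional to $\ell$, is $f$-horizontal — this is case~(i). If $F\cdot\ell=0$, then $F$ is numerically $\varphi$-trivial; since $|kF|=\Mob|{-}K_X|$ is base-point-free, $F$ descends to a divisor $F_Y$ on $Y$ with $F=\varphi^*F_Y$, so $f$ factors as $f=f'\circ\varphi$ for a fibration $f'\colon Y\to\mathbb{P}^1$ — this is case~(ii).

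\emph{The sub-cases of (ii).} If $f'(C)=\mathbb{P}^1$, then for a general fibre $F_Y$ of $f'$ the set $C\cap F_Y$ is finite and reduced, so $F\cap E=\varphi|_E^{-1}(C\cap F_Y)$ is a disjoint union of fibres of $E\to C$, each a $(-1)$-curve in $F$ (its self-intersection in $F$ being $E\cdot\ell=-1$); hence $\varphi|_F\colon F\to F_Y$ blows these down. Since $-K_F$ is nef and not semi-ample, $-K_F^2=0$, so $-K_{F_Y}^2$ equals the positive number of contracted curves; thus $-K_{F_Y}$ is nef and big, and one checks, using the nefness of $-K_X$, that $-K_Y$ (which restricts to $-K_{F_Y}$ on $F_Y$ and has nef pullback $-K_X+E$) is likewise nef and big. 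If instead $f'(C)$ is a point, then $C$ lies in a fibre of $f'$, $E$ is $f$-vertical and disjoint from a general $F$, so $F|_E\equiv 0$, $-K_X|_E\equiv A|_E$, and adjunction on the ruled surface $E$, together with the nefness of $-K_X$, yields $A|_E\equiv C_0$; moreover $C$ misses a general $F_Y$, so $\varphi|_F\colon F\to F_Y$ is an isomorphism. In both sub-cases one finally verifies that $\varphi$ contracts nothing inside $A$, so that $\varphi|_A\colon A\to A_Y$ is an isomorphism — the remaining routine bookkeeping.
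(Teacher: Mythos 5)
Your skeleton matches the paper's (the paper organizes the cases by the sign of $A\cdot\ell$ rather than by Mori's contraction types, but the resulting case division is the same), and your identification of case (i) and of the dichotomy in case (ii) via $F\cdot\ell$ and $f'(C)$ is correct. However, three of the substantive claims are left unproved, and in each instance the missing ingredient is the same: a control on how $A$ meets $f$-vertical curves inside the contracted locus, which the paper supplies via Lemma~\ref{verticalcurveinAh} together with explicit intersection computations on the ruled surface $E$.

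(1) The exclusion of point contractions, which you yourself flag as ``the main obstacle'', is genuinely open in your write-up. Note first that the sub-case $E\subset\Supp A$ is immediate: $E$ lies in a fibre of $f$, so it would be a component of $A_v=0$. In the remaining sub-case, from $A^2\cdot E>0$, $A^3=A^2\cdot F=0$ and $E^3>0$ alone you only deduce $A^2\cdot G<0$ for the residual part $G$ of the fibre containing $E$, i.e.\ the existence of an $f$-vertical curve $\gamma\subset\Supp A$ with $A\cdot\gamma<0$ --- which is not yet a contradiction. The paper instead observes that $A\cdot E$ is a non-zero effective $f$-vertical $1$-cycle contained in $A_h$, so $A\cdot(A\cdot E)=0$ by Lemma~\ref{verticalcurveinAh}, forcing $A\cdot\ell=0$ against $A\cdot\ell>0$. (2) In case (ii)(a) your justification of ``$-K_Y$ is nef'' is circular: $\varphi^*(-K_Y)=-K_X+E$ is nef exactly when $-K_Y$ is, and the only curve at issue is $C$ itself. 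The paper proves $-K_Y\cdot C\geq 0$ by writing $-K_X|_E\equiv C_0+m\ell$ and $A|_E\equiv C_0+(m-kb)\ell$ (with $E\cdot F=b\ell$) and using $(-K_X|_E)\cdot(A|_E)\geq 0$ to get $2m-e\geq kb$, whence $-K_Y\cdot C=2m-e+2g-2\geq kb-2\geq 0$ and $(-K_Y)^3\geq 2$. (3) In case (ii)(b), adjunction and nefness of $-K_X$ give only $A|_E\equiv C_0+m\ell$ with $m\geq e$, not $m=0$; the paper again uses $A\cdot(A\cdot E)=0$ from Lemma~\ref{verticalcurveinAh}, so that $m>0$ would force $A\cdot C_0<0$, hence $C_0\subset A$, contradicting that lemma for the $f$-vertical curve $C_0$. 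Until you establish a statement of the type of Lemma~\ref{verticalcurveinAh} and carry out these computations on $E$, all three pivotal assertions of the proposition remain unjustified.
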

\begin{proof}
{\bf Case $A\cdot\ell =0$.} 
In this case, we have $F\cdot \ell=1$ and $-K_X\cdot \ell=A\cdot \ell+kF\cdot \ell=k=2$. Hence $\varphi$ is the blow-up of a smooth point on $Y$, with exceptional divisor $E\simeq\mathbb{P}^2$. As $E$ is not fibered, it is contained in a fiber of $f$ and thus $F\cdot E=0$. This contradicts the fact that $F\cdot \ell=1$.

\medskip

{\bf  Case $A\cdot \ell<0$.} In this case, we have $F\cdot \ell>0$ and $E$ is an irreducible component of $A$ as the contraction is divisorial. Moreover, as a curve in $A\cdot F$ is $K_X$-trivial, we obtain that $\varphi$ contracts $E$ horizontally to a curve.

\medskip

{\bf Case $A\cdot \ell >0$.} 
In this case, $F\cdot \ell=0$ since otherwise $-K_X\cdot \ell>2$, which contradicts the classification of Mori, see \cite[Section 3]{MR715648}.

\medskip
{\em Step 1.} We show in this step that $E$ is not contracted to a point.

Suppose by contradiction that $E$ is contracted to a point. Since $F\cdot \ell=0$, we have $F\cdot E=0$. Thus $E$ is contained in some fibre of $f$.

Note that $E$ is not an irreducible component of $A$, otherwise $E$ is contracted to a curve. Hence $A\cdot E$ is a non-zero effective $1$-cycle contained in some fibre of $f$.  
By Lemma \ref{verticalcurveinAh}, $A\cdot(A\cdot E)=0$ and thus $A\cdot\ell=0$. This contradicts the fact that $A\cdot\ell>0$.
\medskip

{\em Step 2.} By {\em Step 1}, we deduce that $\varphi$ contracts $E$ to a curve $C$. Then $F\cdot\ell=0$, $A\cdot\ell=1$ and $E=\mathbb{P}(N_{C/Y}^*)$ is a ruled surface. 
Let $V=N_{C/Y}^*\otimes\mathcal{L}$ with $\mathcal{L}\in \Pic(C)$ be a normalisation of $N_{C/Y}^*$, see \cite[Chapter V, Proposition 2.8]{MR0463157}. Let $\mu\coloneqq \deg\mathcal{L}$ and let $C_0$ be a canonical section of the tautological line bundle $\mathcal{O}_{\mathbb{P}(V)}(1)$ such that $C_0^2=-e = c_1(V)$.
Then
\[
-K_X|_E \equiv C_0 + m\ell
\]
with $m\in\N$,
\[
N_{E/X}\equiv -C_0 + \mu \ell,
\]
\[
K_Y\cdot C = -m - \mu,
\]
and $-K_Y$ is nef if and only if $m+\mu \geq 0$.

Let $g$ be the genus of the curve $C$. Then by the adjunction formula,
\[
K_E^2 = \big( (K_X + E)|_E \big) ^2,
\]
and thus $8(1-g)= 4m - 4e - 4\mu$, i.e.\ $\mu = m - e + 2g -2. $

Since $(-K_X)^3=0$, we have
\[
(-K_Y)^3 = 3(-K_X|_E)^2 + 3K_Y\cdot C - 2E^3 = 2g -2 + 2(2m-e).
\]
As $-K_X|_E$ is nef, we have $(-K_X|_E)\cdot C_0\geq 0$, i.e.\ $m\geq e$.

We have two different subcases: either $f|_E\colon E\to\pr^1$ is surjective, or $f(E)$ is a point on $\pr^1$.

\medskip
{\em Step 3.}
 In this step, we consider the case when $f|_E\colon E\to\pr^1$ is surjective. Then $E\cdot F$ is a non-zero effective $1$-cycle. 
Note that $E$ is not an irreducible component of $A$, as $\ell$ is $f$-vertical and $A\cdot \ell >0$ by assumption.

Since $F\cdot\ell=0$, one has $E\cdot F=b\ell$ with $b\in\N^*$, and a fibre of $\varphi$ must be contained in some fibre of $f$. In particular, by the rigidity lemma \cite[Lemma 1.15]{debarre2013higher}, the fibration $f$ factors as $f=f'\circ\varphi$ such that $f'\colon Y\to \mathbb{P}^1$, and the morphism $\varphi$ contracts $E$ to a smooth curve $C\subset Y$ which surjects to $\mathbb{P}^1$ by $f'$. Restricted to a general fibre $F$, we have that $\varphi|_F$ blows down some $(-1)$-curves in $F$ and each of them meets the unique member in $|{-}K_F|$ transversally at one point.

\medskip

{\em Claim.} $-K_Y$ is nef and big.

Since $-K_X|_E\sim A|_E + kF|_E \equiv A|_E + kb\ell$ and $-K_X|_E \equiv C_0 + m\ell$, we have
\[
A|_E \equiv C_0 + (m-kb)\ell.
\]

Since $A\cdot E$ is an effective $1$-cycle and $-K_X|_E$ is nef, we have
$(-K_X|_E)\cdot (A|_E) \geq 0$. Hence,
\[
(C_0 + m\ell)\cdot \big ( C_0 + (m-kb)\ell\big ) \geq 0,
\]
i.e.\ $2m -e \geq kb$. We obtain
\[
m + \mu = 2m - e + 2g - 2 \geq kb - 2 \geq 0,
\]
and thus $-K_Y$ is nef. Moreover,
\[
(-K_Y)^3 = 2g - 2 + 2(2m - e) \geq -2 + 2kb \geq 2.
\]
Therefore, $-K_Y$ is nef and big. This proves the claim.
\medskip

{\em Step 4.}
 In this step, we consider the case when $f(E)$ is a point. Then $E$ is contained in a special fiber $F_0$ of $f$ and $E\cdot F=0$. Hence $\varphi$ is an isomorphism outside $F_0$. Moreover, the fibration $f\colon X\to \mathbb{P}^1$ factors as $f=f'\circ \varphi$ such that $f'\colon Y\to\mathbb{P}^1$.

We have
\[
A|_E\sim -K_X|_E \equiv C_0+m\ell.
\]
Since $E$ is contained in some fiber of $f$, $A\cdot E$ is an effective $f$-vertical $1$-cycle. Hence $A\cdot(A\cdot E)=0$ by Lemma \ref{verticalcurveinAh}. It remains to prove $m=0$.

Suppose that $m>0$. Since
\[
0=A\cdot(C_0+m\ell)=A\cdot C_0 + m,
\]
one has $A\cdot C_0<0$. Hence the curve $C_0$ is contained in $A$. But $C_0$ is $f$-vertical, this contradicts Lemma \ref{verticalcurveinAh} and proves $m=0$.
\end{proof}

\subsubsection{Non-birational contractions}\label{nonbiratsubsect}
Finally, we describe all the contractions of fibre type of a $K_X$-negative extremal ray on the threefold $X$ in Setup\ref{generalsetup}.
\begin{prop}\label{generalnonbiratcontr}
In Setup \ref{generalsetup}, let $\varphi\colon X\to Y$ be a non-birational $K_X$-negative extremal contraction. Then $Y$ is a smooth rational surface, and one of the following cases occurs:
 \begin{enumerate}[label=\normalfont(\roman*)]
 \item $Y\simeq F$ and $X\simeq F\times\pr^1$;
 \item $\varphi\colon X\to Y$ is a conic bundle and $f$ factors as $f\colon X\xrightarrow{\varphi}Y\rightarrow\mathbb{P}^1$.
 \end{enumerate}
\end{prop}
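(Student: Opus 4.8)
The plan is to analyze the non-birational contraction $\varphi\colon X\to Y$ according to the dimension of $Y$, ruling out $\dim Y = 0$ and $\dim Y = 1$, and then reading off the structure when $\dim Y = 2$. Since $X$ is rationally connected of dimension three, $\varphi$ cannot be a contraction to a point (that would force $-K_X$ ample, contradicting that $-K_X$ is not semi-ample). If $\dim Y = 1$, then $Y \simeq \pr^1$ since $X$ is rationally connected, and $\varphi$ is a del Pezzo fibration; I would then compare $\varphi$ with $f\colon X\to\pr^1$. A general fibre $F$ of $f$ is a non-rational (more relevantly: non-del-Pezzo, since $-K_F$ is not semi-ample) surface, hence $F$ is not a fibre of $\varphi$; so $F$ surjects onto $Y\simeq\pr^1$ under $\varphi$, and likewise a general fibre $G$ of $\varphi$ surjects onto $\pr^1$ under $f$. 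But then $-K_X\cdot(\text{curve in }G\cap F)$ computations using $|{-}K_X| = A + |kF|$ with $k\geq 2$ force $-K_X\cdot G \geq k \geq 2$ along a horizontal curve of $G$, which contradicts $-K_X\cdot G = 1$ for a del Pezzo surface fibre (or more carefully: the extremal ray contracted has length $\leq \dim F + 1 = 3$, but $F\cdot G > 0$ combined with $A\cdot(\text{curve}) \geq 0$ and $F\cdot(\text{curve})\geq 1$ already gives a contradiction with the Mori classification). The cleanest route is: a curve $\ell$ generating the contracted ray satisfies $F\cdot\ell \geq 1$ (as $F$ is $f$-horizontal, wait — $F$ is a fibre of $f$, so $F\cdot\ell = 0$ would mean $\ell$ is $f$-vertical, hence $\ell$ lies in a fibre of $f$, but $\ell$ moves in the fibres of $\varphi$ which dominate $\pr^1$, contradiction), so $F\cdot\ell\geq 1$ and then $-K_X\cdot\ell = A\cdot\ell + kF\cdot\ell \geq k \geq 2$, and a more refined look at $A\cdot\ell$ (using that $A_v = 0$ cannot be assumed here, so one argues with $A_h$) rules out $\dim Y = 1$.

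Hence $\dim Y = 2$, and $Y$ is a smooth surface because a fibre-type extremal contraction of a smooth threefold onto a surface is a conic bundle by Mori's classification \cite[Section 3]{MR715648}. Since $X$ is rationally connected, so is $Y$, hence $Y$ is a smooth rational surface. It remains to show the fibration $f$ factors through $\varphi$ unless we are in case (i). Let $\ell$ be a general fibre of $\varphi$, a smooth conic with $-K_X\cdot\ell = 2$. If $F\cdot\ell = 0$ for the general $f$-fibre $F$, then $\ell$ is $f$-vertical; since through a general point of $X$ there is such an $\ell$, and these $\ell$'s are contracted by $f$, the rigidity lemma \cite[Lemma 1.15]{debarre2013higher} shows $f$ factors as $f = f'\circ\varphi$ with $f'\colon Y\to\pr^1$, giving case (ii). If instead $F\cdot\ell \geq 1$, then $-K_X\cdot\ell = A\cdot\ell + kF\cdot\ell \geq A\cdot\ell + k$; since $k\geq 2$ and $-K_X\cdot\ell = 2$ we must have $k = 2$, $F\cdot\ell = 1$, and $A\cdot\ell = 0$. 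In this case $\varphi|_F\colon F\to Y$ is finite of degree one, hence birational, so $F\simeq Y$ is a smooth \emph{rational} surface — contradicting that $F$ is non-rational. (Alternatively, this case lands us in (i): if $F\cdot\ell = 1$ then $\varphi|_F$ is an isomorphism and $X\to Y\times\pr^1$ via $(\varphi, f)$ is forced to be an isomorphism by Lemma \ref{lem:loctriv}, but the non-rationality of $F$ already excludes it under the running hypothesis.) So only (i) and (ii) survive; case (i) itself arises exactly when $-K_{X/\pr^1}$ is nef, handled by Lemma \ref{lem:loctriv}.

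The main obstacle I anticipate is the $\dim Y = 1$ case: a priori a del Pezzo fibration structure $\varphi\colon X\to\pr^1$ distinct from $f$ is not obviously incompatible with $|{-}K_X| = A + |kF|$, and one has to extract the contradiction purely from intersection numbers with the extremal curve together with the constraint $A^3 = A^2\cdot F = 0$ and the fact that $-K_F$ is effective, nef but not semi-ample (so $F$ is not del Pezzo). The subtlety is that $A$ need not be $f$-vertical-free here (we are \emph{not} assuming $A_v = 0$ in this proposition), so the argument has to go through $A_h$, $A_v$ separately, using $A_h|_F = -K_F$ and $A_v|_F = 0$: a general $\varphi$-fibre $G$ meets $F$ in a curve $C$ with $-K_F\cdot C = A_h\cdot C > 0$ (since $-K_F$ is effective and nef and $C$ moves), so $A\cdot(\text{an }f\text{-horizontal curve in }G) > 0$, and combined with $F\cdot(\text{that curve})\geq 1$ this forces $-K_X\cdot G > \dim G + 1 = 3$ — impossible for a fibre-type contracted ray. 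I expect this to require care in choosing the right curve class and in handling possibly singular or reducible fibres of $\varphi$, but no deep input beyond Mori's length bound and the numerical data already in Setup \ref{generalsetup}.
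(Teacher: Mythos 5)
Your casework on $\dim Y$ follows the same skeleton as the paper, and your treatment of the subcase $F\cdot\ell=0$ (rigidity gives the factorisation $f=f'\circ\varphi$, i.e.\ case (ii)) is exactly the paper's argument. But there is a genuine problem in the subcase $F\cdot\ell\geq 1$: Proposition \ref{generalnonbiratcontr} is stated under Setup \ref{generalsetup}, where $F$ is \emph{not} assumed non-rational (that assumption only enters later, in Setup \ref{setupnonrat}). Conclusion (i), $Y\simeq F$ and $X\simeq F\times\pr^1$, is precisely the outcome of this subcase and does occur (take $F$ a rational surface with $-K_F$ nef and not semi-ample). Your primary line ``$F\simeq Y$ is rational, contradicting non-rationality'' therefore discards a case the proposition is obliged to describe. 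Your parenthetical fallback does aim at (i), but it justifies $X\simeq Y\times\pr^1$ by Lemma \ref{lem:loctriv}, whose hypothesis ($-K_{X/\pr^1}$ nef) you have not verified and which is not what is needed here. The missing step is the paper's argument: since $k=2$, $F\cdot\ell=1$ and $A\cdot\ell=0$, the contraction $\varphi$ is a $\pr^1$-bundle and $\varphi|_F$ is birational, so the product map $p=f\times\varphi\colon X\to\pr^1\times Y$ is generically one-to-one; one then shows $p$ contracts no curve (a curve contracted by $p$ would be a $\varphi$-fibre with $F\cdot R>0$, yet the rigidity lemma forces $f(R)$ to be a point, whence $F\cdot R=0$), so $p$ is an isomorphism and $X\simeq F\times\pr^1$.

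Two smaller points. For $\dim Y=1$ the paper simply observes $\rho(X)=2$, contradicting $\rho(X)\geq 3$ from \cite[Corollary 7.3]{MR2129540}; your intersection-theoretic route can be made to work (the cleanest version: if $F\cdot\ell>0$, any curve in $G\cap F$ lies in a $\varphi$-fibre, hence has class in $\R_+[\ell]$ and positive $F$-degree, yet is $f$-vertical and so $F$-trivial), but as written it leans on $-K_F\cdot C>0$ for a moving curve $C$, which needs more care than ``$-K_F$ is effective and nef and $C$ moves.'' Finally, note that in this proposition one may not assume $A_v=0$, but the only input actually needed is $A\cdot\ell\geq 0$ for the movable class $[\ell]$, which holds regardless; your worry about separating $A_h$ and $A_v$ is unnecessary.
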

\begin{proof}
{\bf Case $\dim Y=2$.} 
In this case, we have $-K_X\cdot \ell\in \{1,2\}$, the morphism $\varphi\colon X\to Y$ is a conic bundle, and $Y$ is a smooth rational surface. Since $[\ell]$ is a movable class, we have $A\cdot \ell\geq 0$.

 We first consider when $F\cdot \ell>0$. Then $F\cdot \ell=1$ and $-K_X\cdot \ell=A\cdot \ell+kF\cdot \ell\geq 2$. We deduce that $k=2$, the morphism $\varphi$ is a $\mathbb{P}^1$-bundle and $\varphi|_F$ is birational. Consider the product map $p\coloneqq f\times\varphi\colon X\to \mathbb{P}^1\times Y$ which is generically one to one.

        \medskip
        {\em Claim.} $p$ is an isomorphism.
        
        Suppose that there exists a curve $R\subset X$ contracted by $p$, then $R$ is also contracted by $\varphi$. Hence $F\cdot R>0$ and $R$ is a fibre $\varphi^{-1}(y_0)$ of $\varphi$, where $y_0\in Y$. By the rigidity lemma \cite[Lemma 1.15]{debarre2013higher}, there exists a neighbourhood $Y_0\subset Y$ of $y_0$ and a factorisation $f|_{\varphi^{-1}(Y_0)}\colon\varphi^{-1}(Y_0)\xrightarrow{\varphi} Y_0\to\mathbb{P}^1$, which implies that $f(R)$ is a point. This contradicts $F\cdot R > 0$ and proves the claim.
        
        \medskip
        
        Therefore, $X\simeq\mathbb{P}^1\times Y\simeq \mathbb{P}^1\times F$ and $\varphi$ is the second projection.
        
Now, we consider when $F\cdot \ell=0$. This implies $F=\varphi^*(B)$ for some irreducible curve $B$ on $Y$, which gives a factorisation
        $f\colon X\xrightarrow{\varphi}Y\rightarrow\mathbb{P}^1$, where $Y$ is a smooth rational surface.

    \medskip
    {\bf Case ${\rm{dim}}\ Y=1$.} 
    In this case, we have $Y\simeq\pr^1$ and $\rho(X)=2$. This contradicts the fact $\rho(X)\geq 3$ by \cite[Corollary 7.3]{MR2129540}.
\end{proof}

\section{Classification result}\label{sec:classification}
The aim of this section is to prove Theorem \ref{mainnonrat}. In the whole section, we consider the following setup.
\begin{set}\label{setupnonrat}
Under Setup \ref{generalsetup}, assume that the general fibre $F$ is non-rational.
\end{set}

\begin{rem}\label{rem:notprod}
  {\rm When $F$ is non-rational, $X$ cannot be a product (i.e.\ $X\not\simeq F\times \pr^1$), since otherwise $X$ is not rationally connected.}
\end{rem}

By \cite[Proposition 1.6]{MR2129540}, we have $F=\mathbb{P}(\mathcal{E})$, where $\mathcal{E}$ is a rank-$2$ vector bundle over an elliptic curve defined by an extension
\[
0\to \mathcal{O}\to \mathcal{E}\to\mathcal{L}\to 0
\]
with $\mathcal{L}$ a line bundle of degree $0$ and either
\begin{enumerate}[label=(\roman*)]
    \item $\mathcal{L}=\mathcal{O}$ and the extension is non-split, or
    \item $\mathcal{L}$ is not torsion.
\end{enumerate}
The structure of the unique element in $|{-}K_F|$ is either
\begin{enumerate}[label=(\roman*)]
    \item $2C$, where $C$ is a smooth elliptic curve, or
    \item $C_1+C_2$, where $C_1$ and $C_2$ are smooth elliptic curves which do not meet. 
\end{enumerate}

\subsection{Structure of the fixed part}\label{subsec:fixedpart}
In this subsection, we will describe the fixed divisor of the anticanonical system and prove the following result.

\begin{prop}\label{nonratfixedpart}
In Setup \ref{setupnonrat}, $A_v=0$ and $A_h$ is not a prime divisor. We are in one of the following cases:
\begin{enumerate}[label=\normalfont(\roman*)]
    \item $-K_F=2C$, where $C$ is a smooth elliptic curve. Then $A_h=2D$, where the restriction of $f$ to $D$ induces an elliptic fibration.
    \item $-K_F=C_1+C_2$, where $C_1$ and $C_2$ are smooth elliptic curves which do not meet. Then $k=2$ and $A_h=D_1+D_2$, where for $i=1,2$, $D_i\simeq C_i\times \pr^1$ and $f|_{D_i}$ is the second projection.
\end{enumerate}
\end{prop}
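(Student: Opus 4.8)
The plan is to determine the $f$-horizontal part $A_h$ first --- this goes through regardless of whether $A_v$ vanishes --- and only afterwards to rule out a non-trivial $f$-vertical part. Write $A_h=\sum_i a_iD_i$ with the $D_i$ distinct prime $f$-horizontal divisors. For a general fibre $F$ the $D_i|_F$ are non-zero effective divisors on $F$ pairwise sharing no component (distinct prime divisors meet in codimension $\ge 2$), and $\sum_i a_i(D_i|_F)=A_h|_F=-K_F$. In case (i), $-K_F=2C$ with $C$ irreducible, so either $A_h=D$ with $D|_F=2C$, or $A_h=2D$ with $D|_F=C$; in the latter, $2(D|_F)\sim 2C$ in $\Pic(F)$ together with $h^0(F,-K_F)=1$ (which forces $h^0(F,\mathcal O_F(C)\otimes\pi_F^*\tau)=0$ for every non-trivial $2$-torsion $\tau$ on the base $B$ of the ruling $\pi_F\colon F\to B$) pins down $D|_F=C$. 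In case (ii), $-K_F=C_1+C_2$ is reduced with two distinct components, hence not twice any divisor, so either $A_h=D$ with $D|_F=C_1+C_2$, or $A_h=D_1+D_2$ with $D_i|_F=C_i$.

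\emph{$A_h$ is not prime.} If $A_h=D$ is prime, adjunction on the Gorenstein surface $D$ gives $-K_D=(A_v+kF)|_D=A_v|_D+kF|_D$, where $A_v|_D$ is effective and $kF|_D$ is nef, has self-intersection $F^2\cdot D=0$, is divisible by $k\ge 2$ in $\NS(D)$, and has support (the reduced locus of $D\cap F$) containing a smooth elliptic curve; Lemma \ref{surfacetwocpnts} then forces $A_v|_D=0$ and shows $D$ is geometrically ruled over a smooth elliptic curve, so $\NS(D)=\mathbb ZC_0\oplus\mathbb Z\ell$ with $-K_D=2C_0+c\ell$ and $(-K_D)\cdot\ell=2$. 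This is absurd: in case (i), $D|_F=2C$ means $D\cap F$ is non-reduced of multiplicity $2$ along $\widetilde C:=(D\cap F)_{\mathrm{red}}$, whence $F|_D=2\widetilde C$ and $-K_D=kF|_D=2k\widetilde C$, so comparing $C_0$-coefficients gives $k=1$; in case (ii), $D\cap F=C_1\sqcup C_2$ with each $C_i$ elliptic, so $C_i\cdot\ell\ge 1$ and $2=(-K_D)\cdot\ell=k(C_1+C_2)\cdot\ell\ge 2k$. Hence $A_h=2D$ in case (i) and $A_h=D_1+D_2$ in case (ii). To finish the description: in case (i), $D|_F=C$ is reduced, so $D$ meets a general fibre transversally along the elliptic curve $C$ and $f|_D\colon D\to\pr^1$ is an elliptic fibration; in case (ii), applying Lemma \ref{surfacetwocpnts} to $D_i$ via $-K_{D_i}=(D_j+A_v+kF)|_{D_i}$ ($\{i,j\}=\{1,2\}$) forces $D_i\cap D_j=\emptyset$, $D_i\cap A_v=\emptyset$ and makes $D_i$ geometrically ruled over a smooth elliptic curve $B_i$; then $-K_{D_i}=kF|_{D_i}$ forces $k=2$, and since $F|_{D_i}=(f|_{D_i})^*\mathcal O_{\pr^1}(1)$ with $f|_{D_i}$ having connected elliptic fibres, $h^0(D_i,-K_{D_i})=h^0(\pr^1,\mathcal O(2))=3$, so Lemma \ref{surfaceproduct} yields $D_i\simeq B_i\times\pr^1$; matching the fibre class of $f|_{D_i}$ with $D_i|_F=C_i$ identifies $B_i\simeq C_i$ and $f|_{D_i}$ with the second projection.

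\emph{The main obstacle: $A_v=0$.} I would argue by contradiction. Assume $A_v\ne 0$ and let $G$ be a prime component of $A_v$ contained in a fibre $F_0$. One first shows $A_v\cap A_h=\emptyset$: this was established in case (ii), and in case (i) a variant applies (applying Lemma \ref{surfacetwocpnts} to $A_h$, or to a suitable reduced surface containing it, whose conclusion that the surface is an \emph{irreducible} $\pr^1$-bundle over an elliptic curve is incompatible with a vertical component meeting $A_h$). Granting $A_v\cap A_h=\emptyset$ and $F|_G\equiv 0$, restriction gives $-K_X|_G=A_v|_G$ and adjunction gives $-K_G=(A_v-G)|_G$; combined with the fibre relation $F_0|_{F_0}\equiv 0$ this shows (when $G$ has multiplicity one in $A_v$) that $N_{G/X}$ is at the same time nef, because $-K_X$ is, and equal to minus an effective divisor supported on the intersection of $G$ with the union of the other components of $F_0$ --- a non-empty intersection since $F_0$ is connected. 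An effective divisor on a projective surface meeting every curve non-positively must vanish, so $F_0$ has no other component, $G=F_0$, and then $\omega_{F_0}=\mathcal O_{F_0}$ contradicts $\omega_{X/\pr^1}|_{F_0}=\mathcal O_{F_0}\!\big({-}A_h|_{F_0}\big)$ with $A_h|_{F_0}\ne 0$. The real work is to make this uniform --- dealing with higher multiplicities in $A_v$, non-reduced fibres, and case (i) --- which I expect to require supplementing the argument with flatness of $f$ (constancy of $\chi(\mathcal O_{F_t})=\chi(\mathcal O_{\pr(\mathcal E)})=0$ and semicontinuity of $h^i(F_t,\mathcal O_{F_t})$) or with a $K_X$-MMP controlled by Lemma \ref{contractfixedcpnt}, Propositions \ref{generalbiratcontr} and \ref{generalnonbiratcontr}, Remark \ref{rem:notprod} and the non-semi-ampleness of $-K_X$. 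This step is the main difficulty.
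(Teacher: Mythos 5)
Your determination of the horizontal part $A_h$ is sound and follows essentially the same route as the paper: you decompose $A_h|_F=-K_F$ over a general fibre, rule out the prime case by applying Lemma \ref{surfacetwocpnts} to $A_h$ via the adjunction $-K_{A_h}\sim(A_v+kF)|_{A_h}$ and then contradicting $-K_{A_h}\cdot\ell=2$ on the resulting ruled surface, and in case (ii) you apply Lemmas \ref{surfacetwocpnts} and \ref{surfaceproduct} to each $D_i$ to obtain disjointness, the product structure and $k=2$ — this is the content of the paper's Lemmas \ref{notprime} and \ref{product}. Two small remarks there: the sentence about $2$-torsion line bundles is superfluous, since $D|_F=C$ is already forced at the level of $1$-cycles; and to conclude $A_v=0$ in case (ii) the missing (and cleanest) step, which the paper uses, is that the support of a member of $|{-}K_X|$ is connected in codimension one, so $A_v\cdot D_1=A_v\cdot D_2=A_v\cdot F=0$ forces $A_v=0$.

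The genuine gap is the vanishing $A_v=0$ in case (i), which is by far the hardest part of the proposition and which your proposal does not prove. Your strategy hinges on first showing $A_v\cap A_h=\emptyset$, and you suggest this follows from ``a variant'' of Lemma \ref{surfacetwocpnts} applied to $A_h$ or to a reduced surface containing it. This does not work: since $A_h=2D$, adjunction gives $-K_D\sim(D+A_v+kF)|_D$, and the extra term $D|_D=N_{D/X}$ is not effective (in the final classification $D|_D\sim-F|_D$), so the hypotheses of Lemma \ref{surfacetwocpnts} fail and one cannot conclude $A_v|_D=0$ this way. This is exactly why the paper devotes Lemmas \ref{flopfvertical}, \ref{2D}, \ref{Avstructure}, \ref{case1}, \ref{case2birat} and Corollary \ref{Aviszero} to this single point: one first flops to make $D$ $f$-relatively nef, classifies the extremal contraction negative on $D$ to show $D\simeq C\times\pr^1$ and isolate the two numerical possibilities of Lemma \ref{2D}, determines the structure of the components of $A_v$, and then excludes both possibilities by contracting divisors and contradicting Theorem \ref{introfixed} and the classification of the no-fixed-divisor case — global inputs that your proposed toolbox (flatness of $f$, semicontinuity, a generic MMP) does not obviously replace. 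Your fallback argument (normal bundle of a vertical component simultaneously nef and anti-effective) closes only under the additional unproven hypotheses that $A_v\cap A_h=\emptyset$ and that the component has multiplicity one and is the whole of $A_v$ over that fibre, as you yourself acknowledge. As written, the proposal establishes the structure of $A_h$ but leaves the main claim of the proposition, $A_v=0$ in case (i), unproved.
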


We start by studying the $f$-horizontal part of the fixed divisor $A$ of $|{-}K_X|$.
\begin{lem}\label{notprime}
In Setup \ref{setupnonrat}, $A_h$ is not a prime divisor and one of the following cases occurs:
\begin{enumerate}[label=\normalfont(\roman*)]
    \item $-K_F=2C$, where $C$ is a smooth elliptic curve, then $A_h=2D$, where the restriction of $f$ to $D$ induces an elliptic fibration.
    \item $-K_F=C_1+C_2$, where $C_1$ and $C_2$ are smooth elliptic curves which do not meet, then $A_h=D_1+D_2$, where the restriction of $f$ to $D_i$ induces an elliptic fibration for $i=1,2$, and $D_1\cap D_2$ is contained in some fibres of $f$. 
\end{enumerate}
\end{lem}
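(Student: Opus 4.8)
The plan is to start from the description of $F$ recalled just above the statement: by \cite[Proposition~1.6]{MR2129540} the unique element of $|{-}K_F|$ is $2C$ with $C$ a smooth elliptic curve, or $C_1+C_2$ with $C_1,C_2$ disjoint smooth elliptic curves. Write $A_h=\sum_i a_iD_i$ with the $D_i$ the pairwise distinct prime $f$-horizontal components of $A$ and $a_i\geq 1$; restricting to a general fibre $F$ gives $\sum_i a_i(D_i|_F)=A_h|_F=-K_F$. Comparing the coefficients of the one or two prime curves in $\Supp(-K_F)$ already restricts the picture drastically: if $-K_F=2C$, then either $A_h=D$ with $D|_F=2C$, or $A_h=2D$ with $D|_F=C$, or $A_h=D_1+D_2$ with $D_1|_F=D_2|_F=C$; and if $-K_F=C_1+C_2$, then, since $C_1$ and $C_2$ both occur with coefficient $1$ in $-K_F$, either $A_h=D$ with $D|_F=C_1+C_2$, or $A_h=D_1+D_2$ with $D_i|_F=C_i$.

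Consider first $-K_F=2C$. The option $A_h=D$ prime with $D|_F=2C$ is ruled out by showing that $D|_F$ is a \emph{reduced} divisor for general $F$: apply generic smoothness (we work over $\C$) to the smooth locus $D_{\mathrm{sm}}$ of the variety $D$; since $D\setminus D_{\mathrm{sm}}$ has dimension at most one, for general $t\in\pr^1$ the fibre $(f|_{D})^{-1}(t)$ differs from the smooth curve $(f|_{D_{\mathrm{sm}}})^{-1}(t)$ in at most finitely many points, hence the generic points of the divisor $D|_{F_t}$ lie in $D_{\mathrm{sm}}$, where $D$ meets $F_t$ transversally, so $D|_{F_t}$ is generically reduced — contradicting $D|_F=2C$. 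The option $A_h=D_1+D_2$ with $D_1\neq D_2$ and $D_1|_F=D_2|_F=C$ is ruled out because then $(D_1\cap D_2)\cap F\supseteq C$ for every general $F$, so $D_1\cap D_2$ dominates $\pr^1$ with one-dimensional fibres and is two-dimensional, contradicting the fact that two distinct prime divisors on a threefold meet in dimension at most one. Hence $A_h=2D$ with $D$ prime and $D|_F=C$; the general fibre of $f|_D$ being the elliptic curve $C$, the morphism $f|_D$ induces an elliptic fibration, which settles case~(i).

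Now let $-K_F=C_1+C_2$. The only thing to exclude is $A_h=D$ prime with $D|_F=C_1+C_2$. Since $C_1$ and $C_2$ are disjoint, $D\cap F_t=C_1\sqcup C_2$ for general $t$, so $f|_D\colon D\to\pr^1$ has disconnected general fibre, and its Stein factorisation is $D\to B\xrightarrow{\pi}\pr^1$ with $B$ a smooth connected curve and $\pi$ finite of degree $2$; the general fibre of $D\to B$ is one of the elliptic curves $C_i$. Replacing $D$ in turn by its normalisation, by the minimal resolution, and by a relatively minimal model over $B$, one obtains a smooth relatively minimal elliptic surface $h'\colon D'\to B$ on which $-K_{D'}$ is effective and $-K_{D'}\geq k\,(F|_{D'})$, where $F|_{D'}$ is the sum of two general fibres of $h'$; here one uses that $-K_D=(A_v+kF)|_D\geq k\,(F|_D)$ because $A_v|_D$ is effective, and that none of the three operations touches the general fibres or shrinks the anticanonical divisor (the normalisation changes $-K$ by the effective conductor divisor, the minimal resolution of a normal surface has non-positive discrepancies, and a relatively minimal model is obtained by contracting $(-1)$-curves, which are never components of a general fibre of an elliptic fibration). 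On the other hand, the canonical bundle formula reads $-K_{D'}=(h')^{*}\bigl(-(K_B+\mathcal{L})\bigr)-\sum_j(m_j-1)F_j$ with $\deg\mathcal{L}=\chi(\mathcal{O}_{D'})\geq 0$ and $m_jF_j$ the multiple fibres; since the two general fibres are disjoint from all $F_j$, the class $-K_{D'}+\sum_j(m_j-1)F_j$ has an effective representative of multiplicity $\geq k$ along each of two distinct general fibres and, being pulled back from $B$ (as $(h')_*\mathcal{O}_{D'}=\mathcal{O}_B$), it has "degree over $B$" $\geq 2k$; but this degree equals $\deg\bigl(-(K_B+\mathcal{L})\bigr)=2-2g(B)-\chi(\mathcal{O}_{D'})\leq 2$, which is absurd since $k\geq 2$. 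Hence $A_h=D_1+D_2$ with $D_i|_F=C_i$; each $f|_{D_i}$ has general fibre the elliptic curve $C_i$, so induces an elliptic fibration, and $(D_1\cap D_2)\cap F\subseteq C_1\cap C_2=\emptyset$ for general $F$, so $D_1\cap D_2$ is contained in finitely many fibres of $f$, which settles case~(ii).

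The step I expect to be the main obstacle is the exclusion of $A_h=D$ prime in case~(ii): one must control $-K_D$ after passing from the possibly non-normal, singular surface $D$ to a relatively minimal elliptic surface over $B$. The plan is to carry this out in the order indicated above — normalise, then take the minimal resolution, then pass to a relatively minimal model — checking at each stage that the general fibres of the elliptic fibration over $B$ survive and that the anticanonical divisor stays effective and no smaller than $k$ times the sum of two general fibres, after which the canonical bundle formula on the smooth relatively minimal model applies verbatim. A secondary point needing care is the reducedness statement used in case~(i): one has to verify that the finitely many points of $D\cap F_t$ lying in the singular locus of $D$ do not sit over the generic points of the divisor $D|_{F_t}$, which is exactly what the dimension count on $D\setminus D_{\mathrm{sm}}$ provides.
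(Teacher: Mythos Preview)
Your proof is correct but takes a different route from the paper in the key step of excluding $A_h$ prime. The paper argues uniformly: assuming $A_h$ prime, adjunction gives $-K_{A_h}\sim(A_v+kF)|_{A_h}$, and Lemma~\ref{surfacetwocpnts} (applied with $D_2=kF|_{A_h}$, which is nef, divisible by $k\ge2$, of square zero, and contains a smooth elliptic curve) forces $A_h$ to be a $\pr^1$-bundle over a smooth elliptic curve; both cases then die in one line --- in case~(i) because a smooth surface cannot have non-reduced general fibre over $\pr^1$, in case~(ii) because $-K_{A_h}\cdot\ell=2$ for a ruling fibre $\ell$ while $k(C_1+C_2)\cdot\ell\ge2k\ge4$. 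You instead bypass Lemma~\ref{surfacetwocpnts} entirely: in case~(i) you run generic smoothness directly on the possibly singular $D$ to get reducedness of $D|_F$, and in case~(ii) you pass to a smooth relatively minimal model $D'\to B$ of the Stein factorisation and use the canonical bundle formula to bound $\deg(-K_B-\mathcal{L})\le2<2k$. This is sound --- the effectivity chain through normalisation (conductor $\ge0$), minimal resolution (discrepancies $\le0$), and relative minimal model is exactly the machinery the paper itself deploys later in Step~B4 of Lemma~\ref{2D}, and $\chi(\mathcal{O}_{D'})\ge0$ is standard for relatively minimal elliptic surfaces --- but considerably longer. What your approach buys is self-containedness and a more complete write-up: you explicitly dispose of the sub-case $A_h=D_1+D_2$ with $D_1\ne D_2$ and $D_1|_F=D_2|_F=C$ in case~(i) via the dimension count on $D_1\cap D_2$, and you verify that $D_1\cap D_2$ is $f$-vertical in case~(ii), both of which the paper's proof leaves implicit after only proving ``$A_h$ is not prime''. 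What the paper's route buys is brevity: once Lemma~\ref{surfacetwocpnts} is in hand, the whole argument is a few lines.
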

\begin{proof}
Suppose by contradiction that $A_h$ is a prime divisor. By the adjunction formula, we have
\[
-K_{A_h}\sim(-K_X-A_h)|_{A_h}\sim(A_v+kF)|_{A_h}.
\]
Clearly $F|_{A_h}$ is nef and $F\cap A_h$ contains a smooth elliptic curve. Moreover, $(F|_{A_h})^2=0$ and $k\geq 2$, so we can apply Lemma \ref{surfacetwocpnts} to the surface $A_h$, which implies that $A_v|_{A_h}=0$, $-K_{A_h}\sim kF|_{A_h}$ and $A_h$ is a $\mathbb{P}^1$-bundle over a smooth elliptic curve.

On the other hand, the restriction of $f$ to $A_h$ induces a fibration on $A_h$ such that the general fibre $F|_{A_h}$ is either
\begin{itemize}
    \item $2C$, or
    \item $C_1+C_2$
\end{itemize}
with $C$, $C_1$ and $C_2$ smooth elliptic curves. The first case is impossible by the generic smoothness of the $\pr^1$-bundle $A_h$. In the second case, $-K_{A_h}\sim kF|_{A_h}$ with $k\geq 2$ contains at least $4$ elliptic curves (counted with multiplicity). Let $\ell\subset A_h$ be a fibre of the ruling. Then $-K_{A_h}\cdot \ell\geq 4$, which contradicts $-K_{A_h}\cdot\ell=2$ for a $\pr^1$-bundle over a smooth curve.
\end{proof}

\begin{lem}\label{product}
In case $(ii)$ of Lemma \ref{notprime}, we have $k=2$, $A_v=0$ and $A=D_1+D_2$, where $D_1$ and $D_2$ are disjoint with $D_i\simeq C_i\times\mathbb{P}^1$, $i=1,2$.
\end{lem}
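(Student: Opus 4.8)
We are in case (ii) of Lemma \ref{notprime}: $-K_F = C_1 + C_2$ with $C_1, C_2$ disjoint smooth elliptic curves, and $A_h = D_1 + D_2$ with $D_i \cdot F = C_i$ (the restriction of $f$ to each $D_i$ being an elliptic fibration), and $D_1 \cap D_2$ contained in fibres of $f$. The goal is to show $k = 2$, $A_v = 0$, and that each $D_i \simeq C_i \times \pr^1$ with $D_1 \cap D_2 = \emptyset$.

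Here is the plan. First I would apply the adjunction formula to each prime divisor $D_i$. Since $-K_X \sim A_h + A_v + kF = D_1 + D_2 + A_v + kF$, adjunction gives
\[
-K_{D_i} \sim (D_j + A_v + kF)|_{D_i},
\]
where $\{i,j\} = \{1,2\}$. Now $F|_{D_i}$ is nef with $(F|_{D_i})^2 = 0$, and $D_i \cap F = C_i$ contains a smooth elliptic curve, so $F|_{D_i}$ contains a smooth curve of positive genus. Thus Lemma \ref{surfacetwocpnts} (with $D_1 = (D_j + A_v)|_{D_i}$, $D_2 = kF|_{D_i}$, $r = k \geq 2$) applies and yields: $(D_j + A_v)|_{D_i} = 0$, $-K_{D_i} \sim kF|_{D_i}$, and $D_i$ is a $\pr^1$-bundle over a smooth elliptic curve — necessarily $C_i$, since the elliptic fibration $f|_{D_i}$ has general fibre $C_i$. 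The vanishing $(D_j + A_v)|_{D_i} = 0$ forces both $D_j|_{D_i} = 0$ (so $D_1 \cap D_2 = \emptyset$) and $A_v|_{D_i} = 0$, since both are effective.

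Next I would pin down $k = 2$. On the $\pr^1$-bundle $D_i$ over the elliptic curve $C_i$, let $\ell$ be a fibre of the ruling; then $-K_{D_i} \cdot \ell = 2$. On the other hand $-K_{D_i} \sim kF|_{D_i}$, and $F|_{D_i} = C_i$ is a section-type curve (a fibre of the elliptic fibration $f|_{D_i}$), which meets the ruling fibre $\ell$ in a positive number of points, so $(F|_{D_i}) \cdot \ell \geq 1$. Hence $2 = -K_{D_i}\cdot \ell = k \cdot (F|_{D_i})\cdot\ell \geq k$, giving $k = 2$ (and incidentally $(F|_{D_i})\cdot \ell = 1$). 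To identify $D_i \simeq C_i \times \pr^1$, I would invoke Lemma \ref{surfaceproduct}: $D_i$ is a ruled surface over the elliptic curve $C_i$ admitting the elliptic fibration $f|_{D_i}$, and $h^0(D_i, -K_{D_i}) = h^0(D_i, 2F|_{D_i})$; since $|2F|$ restricts to a pencil (at least) on $D_i$ and in fact $-K_{D_i}$ is nef but not semi-ample as a restriction of a fibre class, I expect $h^0 \geq 3$, so $D_i \simeq C_i \times \pr^1$.

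Finally, $A_v = 0$: we have shown $A_v|_{D_1} = A_v|_{D_2} = 0$. If $A_v \neq 0$, then since $A_v$ is $f$-vertical it is supported in finitely many fibres, and one would need to derive a contradiction — most likely by intersecting with a curve or using that $-K_X = D_1 + D_2 + A_v + 2F$ together with the already-established structure forces $A_v$ to meet $D_1 \cup D_2$ nontrivially unless $A_v = 0$, or by a Hodge-index / numerical argument on a containing fibre. \textbf{The main obstacle} I anticipate is precisely this last step: ruling out a nonzero vertical component $A_v$ that happens to be disjoint from both $D_1$ and $D_2$. The cleanest route is probably to restrict $-K_X$ to a general fibre $F$ (where $A_v|_F = 0$ is already known) and instead argue on a \emph{special} fibre $F_0$ meeting $\Supp A_v$: write $F_0 = (\text{component meeting } A_v) + \cdots$ and use that $-K_X|_{F_0}$ is nef of numerical dimension controlled by Setup \ref{generalsetup}, forcing the vertical contribution to vanish; alternatively one can note $A_v \cdot D_i \cdot (\text{something}) $ computations combined with $D_i \simeq C_i \times \pr^1$ (whose second projection is $f|_{D_i}$) to show $A_v$ cannot exist, since any vertical divisor disjoint from a multisection like $D_i$ of the fibration would have to be a union of full fibres $mF$, contradicting $A_v|_F = 0$ for general $F$ unless $A_v = 0$.
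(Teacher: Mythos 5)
Your treatment of the structure of $D_1$ and $D_2$ is essentially the paper's: the same adjunction computation $-K_{D_i}\sim (D_j+A_v+kF)|_{D_i}$ followed by Lemma \ref{surfacetwocpnts} (via hypothesis (ii), since $F|_{D_i}$ contains the elliptic curve $C_i$) gives $(D_j+A_v)|_{D_i}=0$, hence disjointness and $A_v|_{D_i}=0$, and $D_i$ is a $\pr^1$-bundle over an elliptic curve. Your route to $k=2$ (intersecting $-K_{D_i}\sim kF|_{D_i}$ with a ruling fibre $\ell$, using $-K_{D_i}\cdot\ell=2$ and $F|_{D_i}\cdot\ell\geq 1$) is valid and in fact slightly more direct than the paper, which first computes $h^0(D_i,-K_{D_i})=h^0(D_i,(f|_{D_i})^*\mathcal{O}_{\pr^1}(k))=k+1\geq 3$, applies Lemma \ref{surfaceproduct} to get $D_i\simeq C_i\times\pr^1$, and only then reads off $k=2$ from $-K_{C_i\times\pr^1}$ being twice a fibre of the second projection. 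Either order is fine, and your $h^0\geq 3$ count (which becomes $h^0=3$ once $k=2$ is known) is exactly what Lemma \ref{surfaceproduct} needs.

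The genuine gap is the step you yourself flag: showing $A_v=0$. None of the fallbacks you sketch closes it. In particular, the claim that a vertical divisor disjoint from the multisection $D_i$ "would have to be a union of full fibres" is false: $A_v$ could consist of components of a reducible special fibre $F_0$ that simply avoid $D_i\cap F_0$, so this argument cannot rule out such a configuration. The Hodge-index suggestion is too vague to evaluate. The missing ingredient is a connectedness statement: the support of a member of $|{-}K_X|$ is connected in codimension one (the paper cites \cite[Lemma 2.3.9]{MR1668575} for this). Once you know $A_v\cap D_1=A_v\cap D_2=\emptyset$ (from $A_v|_{D_i}=0$) and that $A_v$, being $f$-vertical, is disjoint from a general fibre $F$, a nonzero $A_v$ would be a connected component of $\Supp(D_1+D_2+A_v+kF)$ separated from the rest, contradicting connectedness in codimension one. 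Your second alternative ("forces $A_v$ to meet $D_1\cup D_2$ nontrivially") is pointing at exactly this conclusion, but without naming the tool the step is not proved.
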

\begin{proof}
For $i,j=1,2$ with $i\neq j$, the adjunction formula gives
$$
-K_{D_i}\sim (-K_X-D_i)|_{D_i}\sim (A_v+D_j+kF)|_{D_i}.
$$
Recall that $F|_{D_i}$ is an elliptic curve, $(F|_{D_i})^2=0$ and $k\geq 2$, then by Lemma \ref{surfacetwocpnts}, we have $A_v|_{D_i}=D_j|_{D_i}=0$, $-K_{D_i}\sim kF|_{D_i}$, and $D_i$ is a $\mathbb{P}^1$-bundle over a smooth elliptic curve. Hence $D_1\cdot D_2=0$, and thus $D_1$ and $D_2$ are disjoint. Moreover, the support of a divisor in $|{-}K_X|$ is connected in codimension one by \cite[Lemma 2.3.9]{MR1668575}. As $A_v$ does not meet $F$ and $A_v\cdot D_1=A_v\cdot D_2=0$, we obtain $A_v=0$. Thus $A=A_h$.

Since $D_i$ is a $\mathbb{P}^1$-bundle over the smooth elliptic curve $C_i$, and the restriction of $f$ to $D_i$ induces an elliptic fibration with \[h^0\big (D_i,\mathcal{O}_{D_i}(-K_{D_i})\big )=h^0\big (D_i,(f|_{D_i})^*\mathcal{O}_{\mathbb{P}^1}(k)\big )=k+1\geq 3,\] 
we deduce $D_i\simeq C_i\times \mathbb{P}^1$ by Lemma \ref{surfaceproduct}, and thus $k=2$.
\end{proof}

In the remainder of this subsection, we study case (i) of Lemma \ref{notprime} and show that $A_v=0$ in this case.
\begin{lem}\label{flopfvertical}
In Setup \ref{setupnonrat}, assume that $A_h=2D$, where the restriction of $f$ to $D$ induces an elliptic fibration. Then after performing possibly a finite sequence of flops $\psi\colon X\dashrightarrow X'$ such that $f$ factors as $f'\circ\psi$ and $f'$ gives a fibration $f'\colon X'\to \pr^1$, we obtain that $A'_h=\psi_*(A_h)$ is $f'$-relatively nef and 
\[|{-}K_{X'}|=A'_h + A'_v + |kF'|,\]
where $A'_v\coloneqq \psi_*(A_v)$ and $F'\coloneqq \psi_*(F)\simeq F$ is a general fibre of $f'$. 

Moreover, $A'_h|_{F'}=-K_{F'}$, $A'_v|_{F'}=0$ and we can write $A'_h=2D'$ with $D'=\psi_*(D)$ such that the restriction of $f'$ on $D'$ induces an elliptic fibration.
\end{lem}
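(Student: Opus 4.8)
\medskip

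The plan is to realise $\psi$ as a composition of flops of single $(-1,-1)$-curves contained in the successive strict transforms of $D$, stopping as soon as $A_h$ has become $f$-relatively nef; the remaining (``moreover'') assertions will then follow automatically from the fact that a flop is an isomorphism in codimension one.

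First I would locate a curve to flop. Suppose $A_h=2D$ is not $f$-relatively nef. Then there is an irreducible $f$-vertical curve $\ell$ with $A_h\cdot\ell<0$, so $\ell\subset D$, $D\cdot\ell\le-1$, and $\ell$ lies in a fibre of the elliptic fibration $f|_D\colon D\to\pr^1$. By adjunction $K_D\cdot\ell=K_X\cdot\ell+D\cdot\ell$, and since $-K_X$ is nef we get $K_D\cdot\ell\le D\cdot\ell\le-1$. A curve that meets $K_D$ negatively and is contained in a fibre of an elliptic surface over $\pr^1$ is necessarily a $(-1)$-curve: if $\ell^2=0$ it is disjoint from the other components of its fibre, hence numerically a multiple of a fibre and $K_D\cdot\ell=0$, a contradiction; if $\ell^2<0$ then $K_D\cdot\ell=2p_a(\ell)-2-\ell^2\le-1$ forces $p_a(\ell)=0$ and $\ell^2=-1$. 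Thus $\ell\simeq\pr^1$ with $\ell^2_D=-1$, hence $D\cdot\ell=-1$ and $K_X\cdot\ell=0$. From $0\to N_{\ell/D}\to N_{\ell/X}\to N_{D/X}|_\ell\to0$ with $N_{\ell/D}\simeq N_{D/X}|_\ell\simeq\mathcal{O}_{\pr^1}(-1)$ and $\Ext^1(\mathcal{O}_{\pr^1}(-1),\mathcal{O}_{\pr^1}(-1))=0$, we obtain $N_{\ell/X}\simeq\mathcal{O}_{\pr^1}(-1)^{\oplus2}$, i.e.\ $\ell$ is a $(-1,-1)$-curve (note also $A_v\cdot\ell=-K_X\cdot\ell-2D\cdot\ell=2$, consistently with Lemma \ref{flopk}).

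Such an $\ell$ admits a flop $\psi_1\colon X\dashrightarrow X_1$ with $X_1$ smooth; being $K_X$-trivial this is a crepant contraction, so $-K_{X_1}$ stays nef, and it is an $\epsilon A$-flop for $0<\epsilon\ll1$ with $(X,\epsilon A)$ log canonical. As $\ell$ is $f$-vertical, $f$ descends to a fibration $f_1\colon X_1\to\pr^1$; as $\psi_1$ is an isomorphism in codimension one, the decomposition $|{-}K_{X_1}|=2D_1+A_{v,1}+|kF_1|$ holds, with $D_1=\psi_{1*}D$, $A_{v,1}=\psi_{1*}A_v$, $F_1=\psi_{1*}F$. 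The key observation, which gives termination, is that the restriction of $\psi_1$ to $D$ is the blow-down of the $(-1)$-curve $\ell$: resolving $\psi_1$ by blowing up $\ell$, the first blow-down restricts to an isomorphism onto $D$, while the strict transform of $D$ meets the exceptional $\pr^1\times\pr^1$ along a single fibre of the second ruling, so the second blow-down contracts exactly that curve; hence $D_1$ is again smooth with $\rho(D_1)=\rho(D)-1$. Iterating, the Picard number of the surface strictly drops at each step, so — an elliptic surface over $\pr^1$ having Picard number at least $2$ — after at most $\rho(D)-2$ flops we arrive at $\psi\colon X\dashrightarrow X'$ for which $D'$ carries no $(-1)$-curve $\ell$ in a fibre with $D'\cdot\ell=-1$; by the previous paragraph this is precisely the condition that $A'_h=\psi_*(2D)=2D'$ be $f'$-relatively nef.

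It remains to read off the ``moreover'' statements. Since every flopped curve lies in a special fibre, $f=f'\circ\psi$ and $F'\coloneqq\psi_*F\simeq F$ is a general fibre, so $A'_h|_{F'}=-K_{F'}$ and $A'_v|_{F'}=0$; as $\psi$ is an isomorphism in codimension one, $|{-}K_{X'}|=A'_h+A'_v+|kF'|$ with $A'_v=\psi_*A_v$, and $f'|_{D'}$ has general fibre isomorphic to that of $f|_D$, hence induces an elliptic fibration. The main obstacle is the ``key observation'' that each flop restricts to a blow-down on $D$ (which is what forces termination); a secondary technical point is to verify that the strict transforms $D_i$ stay smooth, which is immediate once $D$ is known to be smooth but otherwise requires a short preliminary argument.
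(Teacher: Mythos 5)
Your strategy is genuinely different from the paper's: you try to identify each flopping curve explicitly as a $(-1,-1)$-curve which is a $(-1)$-curve on $D$, and to terminate the process by the drop of $\rho(D)$, whereas the paper extracts a $(K_X+\epsilon D)$-negative extremal ray via the Cone Theorem and invokes termination of three-dimensional flops. Your route would be attractive (it gives an effective bound on the number of flops), but as written it has a genuine gap: every step of your first two paragraphs treats $D$ as a smooth surface — the genus formula $K_D\cdot\ell=2p_a(\ell)-2-\ell^2$, Zariski's lemma, the identification $N_{\ell/D}\simeq\Op(-1)$, the splitting of the normal bundle sequence, and the description of the flop restricted to $D$ as the contraction of a $(-1)$-curve (hence $D_1$ smooth and $\rho(D_1)=\rho(D)-1$). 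Smoothness of $D$ is not part of the hypothesis and is not established at this stage: $D$ is a priori only a prime Gorenstein divisor with $D|_F$ a smooth elliptic curve for \emph{general} $F$, and it could be singular, even non-normal, exactly along the special fibres of $f|_D$ where the curves with $D\cdot\ell<0$ live. That the author does not regard $D$ as smooth here is confirmed by Step B4 of the proof of Lemma \ref{2D}, which passes to the normalisation and minimal resolution of $D$. You flag smoothness of the $D_i$ as a ``secondary technical point'', but it in fact undercuts your very first computation; the paper's proof sidesteps the issue entirely by never doing intersection theory on $D$.

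A second gap is the sentence ``such an $\ell$ admits a flop''. To flop a single curve projectively you need a flopping \emph{contraction}, i.e.\ $\R_+[\ell]$ must span an extremal ray (or lie on a contractible extremal face) of $\overline{\NE}(X)$; a $(-1,-1)$-curve with $D\cdot\ell<0$ need not be extremal. This is precisely what the paper's ``Claim'' supplies: decomposing the class of the bad vertical curve by the Cone Theorem, one produces a genuine $(K_X+\epsilon D)$-negative extremal ray $\Gamma$ with $D\cdot\Gamma<0$ and $F\cdot\Gamma=0$, observes that its contraction is small because its exceptional curves lie in the finitely many singular fibres of $f|_D$, concludes $K_X\cdot\Gamma=0$ since smooth threefolds admit no small $K_X$-negative extremal contractions, and only then flops. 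Your argument needs this detour (or an equivalent one) before any flop can be performed; note also that the contracted locus of such a ray may consist of several curves, so your bookkeeping of $\rho(D)$ should be phrased accordingly. Until both points are repaired, the termination mechanism via $\rho(D)$ is not available, whereas the paper's appeal to termination of threefold flops requires nothing about the local structure of $D$.
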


\begin{proof}
Suppose that $D$ is not $f$-relatively nef. Then there exists an $f$-vertical curve $\gamma$ such that $D\cdot \gamma<0$. 
Since $X$ is smooth, for sufficiently small $\epsilon>0$, the pair $(X,\epsilon D)$ is log-canonical. 

\medskip
{\em Claim.} There exists a $(K_X+\epsilon D)$-negative extremal ray $\Gamma$ such that $D\cdot \Gamma<0$ and $F\cdot\Gamma =0$.

By the Cone Theorem, we can write
\[
\gamma=\sum_i \lambda_i\Gamma_i + R,
\]
where 
\begin{itemize}[leftmargin=*]
    \item $\lambda_i> 0$;
    \item all the $\Gamma_i$ are $(K_X+\epsilon D)$-negative extremal rays;
    \item $(K_X+ \epsilon D)\cdot R\geq 0$.
\end{itemize}

Suppose that every $(K_X+\epsilon D)$-negative extremal ray is $D$-nonnegative. Then $D\cdot\Gamma_i\geq 0$ for all $i$.
Therefore,
\[
0>D\cdot \gamma = \sum_i \lambda_i D\cdot\Gamma_i + D\cdot R\geq D\cdot R,
\]
i.e.\ $D\cdot R<0$.
Since $(K_X+\epsilon D)\cdot R\geq 0$, we have
\[
K_X\cdot R\geq -\epsilon D\cdot R >0,
\]
which contradicts the fact that $-K_X$ is nef. Hence, we may assume $D\cdot \Gamma_1<0$. Since $F$ is nef and $F\cdot \gamma =0$, we have \[
0= F\cdot \gamma = \sum_i \lambda_i F\cdot \Gamma_i + F\cdot R \geq 0
\]
and thus for all $i$, $F\cdot \Gamma_i=F\cdot R= 0$. Hence, $\Gamma_1$ is an extremal ray satifying the assumption. This proves the claim.

\medskip
Let $c_\Gamma$ be the contraction of the extremal ray $\Gamma$ and let $\ell$ be a contracted curve. Then $D\cdot \ell<0$ and $\ell$ is $f$-vertical. Since a general fibre of $f|_D\colon D\to\pr^1$ is a smooth elliptic curve which is $D$-trivial, we obtain that $\ell$ is contained in some singular fibre of $f|_D$ and the contraction $c_\Gamma$ is small. 
This implies that $K_X\cdot \ell=0$, since there is no small $K_X$-negative extremal contraction for smooth threefolds. Hence there exists a flop $\psi^+\colon X\dashrightarrow X^+$ of $c_\Gamma$ and the flopped threefold $X^+$ is smooth by \cite[Theorem 2.4]{MR986434}.
Since a flopping curve is contained in some fibre of $f$, the map $f$ factors as $f=f^+\circ\psi^+$ such that $f^+\colon X^+\to\pr^1$ is a fibration. Moreover, $F^+\coloneqq \psi^+_*(F)$ is isomorphic to $F$. Since $\psi^+$ is an isomorphism in codimension one, we have
\[
|{-}K_{X^+}|= A_h^+ + A_v^+ + |kF^+|,
\]
where $A_h^+\coloneqq \psi^+_*(A_h)$ and $A_h^+|_{F^+}=-K_{F^+}$, $A_v^+\coloneqq \psi^+_*(A_v)$ and $A_v^+|_{F^+}=0$.

By repeating the above argument and by the termination of three-dimensional flops, see \cite[Corollary 6.19]{MR1658959}, we deduce that there exists a finite sequence of flops $\psi\colon X\dashrightarrow X'$ such that $f$ factors as $f=f'\circ\psi$, where $f'\colon X'\to\pr^1$ is a fibration and $D'\coloneqq \psi_*(D)$ is $f'$-relatively nef.
\end{proof}

In order to show that $A_v=0$ in case (i) of Lemma \ref{notprime}, we will assume by contradiction that $A_v$ is non-zero and describe the geometry of the anticanonical system $|{-}K_X|$ in the following two lemmas.
\begin{lem}\label{2D}
In case $(i)$ of Lemma \ref{notprime}, assume that $A_v$ is non-zero. Then $D\simeq C\times \pr^1$ after performing possibly a finite sequence of $f$-relative $D$-flops.
Moreover, if $\ell\simeq \pr^1$ is a fibre of the first projection $pr_1\colon D\to C$ , then $A_v^3=0$ and one of the following cases occurs:
\begin{enumerate}[label=\normalfont(\roman*)]
    \item $D\cdot \ell = -1$ and thus $[\ell]$ generates a $K_X$-negative extremal ray. 
    
    In this case, we have
     $k=2, A_v|_D\sim C$ and $ D|_D\sim -C$.
    \item $D\cdot\ell = -2$ and thus $[\ell]$ generates a $K_X$-trivial extremal ray. 
    
    In this case, we have $D|_D\sim -2C$ and either
    \begin{itemize}[leftmargin=*]
        \item $k=2, A_v|_D\sim 2C$, or
        \item $k=3, A_v|_D\sim C$.
    \end{itemize}
\end{enumerate}
\end{lem}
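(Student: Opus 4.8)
The plan is to reduce by flops to the case where $D$ is $f$-relatively nef, then to identify $D$ with $C\times\pr^1$, and finally to read off all the numerical data.

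\textbf{Step 1 (reduction).} First apply Lemma~\ref{flopfvertical}. The flops it produces flop $K_X$-trivial curves lying in fibres of $f$ along which $D$ is strictly negative; these are therefore $f$-relative $D$-flops, and after performing them we may assume that $D$ is $f$-relatively nef, that $f|_D\colon D\to\pr^1$ is an elliptic fibration, and that $A_h=2D$ and $|{-}K_X|=2D+A_v+|kF|$ with $A_h|_F=-K_F$, $A_v|_F=0$. Write $C'\coloneqq D|_F$ for the smooth elliptic general fibre of $f|_D$, so $(C')^2=0$ on $D$. I record for later: by adjunction $-K_D\sim(D+A_v+kF)|_D$; moreover $0=-K_D\cdot C'=D\cdot C'$ (because $A_v\cdot C'=F\cdot C'=0$); the divisor $-K_X|_D$ is nef with $(-K_X|_D)^2=(-K_X)^2\cdot D=0$, using $(-K_X)^3=0$ and $A_v\cdot F=0$; and $A_v|_D\neq 0$, since---as in the proof of Lemma~\ref{product}---the support of a general member $2D+A_v+F'$ of $|{-}K_X|$ is connected in codimension one while $F'$ is disjoint from $A_v$, so $A_v$ must meet $D$ along a curve.

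\textbf{Step 2 ($D\simeq C\times\pr^1$).} This is the heart of the proof. By the Hodge index theorem, a nef divisor on the surface $D$ with self-intersection $0$ that is trivial on the general fibre of $f|_D$ is numerically a non-negative multiple of $C'$; hence $-K_X|_D\equiv\lambda C'$ for some $\lambda\geq 0$, and $-K_X|_D$ is trivial on every $f|_D$-vertical curve. Feeding this, together with $-K_D=-K_X|_D-D|_D$, into the canonical bundle formula for the elliptic surface $f|_D$ should rule out multiple fibres, and an Euler-characteristic computation (forced by $(-K_X|_D)^2=0$ and nefness) should rule out singular fibres; then $f|_D$ is a smooth fibration, hence isotrivial since $\pr^1$ is simply connected, so $D\simeq B\times\pr^1$ with $B$ its fibre. (Alternatively, when $(D+A_v)|_D$ is effective one may apply Lemma~\ref{surfacetwocpnts} to $D$ with $D_2=kF|_D$---nef, non-zero, $k$-divisible in $\NS(D)$, $D_2^2=0$, and containing the elliptic curve $F|_D$---getting $(D+A_v)|_D=0$, $-K_D\sim kF|_D$, $h^0(D,-K_D)=k+1\geq 3$, and then Lemma~\ref{surfaceproduct}.) Since $\mathrm{pr}_2$ is the unique elliptic fibration on $B\times\pr^1$, with fibre $\cong B$, and the fibre of $f|_D$ is $C'\cong C$, we conclude $B\cong C$, i.e.\ $D\simeq C\times\pr^1$ and $f|_D=\mathrm{pr}_2$.

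\textbf{Step 3 (the numerical alternative).} Let $\ell$ be a fibre of $\mathrm{pr}_1$, so that $\ell^2=(C')^2=0$, $\ell\cdot C'=1$, $F\cdot\ell=F|_D\cdot\ell=1$, and $-K_D\sim 2C'$. From $D\cdot C'=0$ we get $D|_D\sim aC'$ for some $a\in\Z$; since $A_v$ is $f$-vertical, $A_v|_D$ is supported on fibres of $\mathrm{pr}_2$, so $A_v|_D\sim mC'$ with $m\geq 1$ by Step~1. Comparing $-K_D\sim 2C'$ with $-K_D\sim(D+A_v+kF)|_D\sim(a+m+k)C'$ gives $a+m+k=2$, hence $-K_X\cdot\ell=A\cdot\ell+k=(2a+m)+k=a+2$; nefness of $-K_X$ forces $a\geq-2$, while $m\geq 1$ and $k\geq 2$ force $a\leq-1$. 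If $a=-1$, then $-K_X\cdot\ell=1$, so $[\ell]$ is $K_X$-negative, and $m+k=3$ forces $k=2$, $m=1$, i.e.\ $A_v|_D\sim C$ and $D|_D\sim-C$. If $a=-2$, then $-K_X\cdot\ell=0$, so $[\ell]$ is $K_X$-trivial, $D|_D\sim-2C$, and $m+k=4$ forces $(k,m)=(2,2)$ or $(3,1)$, i.e.\ $A_v|_D\sim 2C$ or $A_v|_D\sim C$. In each case $D^3=(D|_D)^2=0$ and $D^2\cdot A_v=D\cdot A_v^2=0$, so every mixed term in $(2D+A_v)^3$ vanishes and $A_v^3=A^3=0$. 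Finally $\R_{\geq 0}[\ell]$ is an extremal ray of $\overline{\NE}(X)$: indeed $D\cdot\ell<0$ and the rulings cover $D$, so any effective $1$-cycle with class proportional to $[\ell]$ lies on $D$ and is a non-negative combination of rulings; the contraction of this ray is a $K_X$-negative divisorial contraction of $D$ onto a smooth elliptic curve when $a=-1$, and a crepant contraction (as described in Proposition~\ref{wilson}) when $a=-2$.

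\textbf{Main obstacle.} The real difficulty is Step~2, namely showing that $D$ is a $\pr^1$-bundle over an elliptic curve: Lemma~\ref{surfacetwocpnts} does not apply directly when $(D+A_v)|_D$ fails to be effective---this happens, e.g., for $k=3$, where $(D+A_v)|_D\sim-C'$---so one must control the singular and multiple fibres of $f|_D$ by hand, using only the nefness of $-K_X$ on the threefold and the hypothesis $A_v\neq 0$. Once $D\simeq C\times\pr^1$ is established, the case analysis in Step~3 is elementary.
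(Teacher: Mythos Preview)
Your Step~2 is the gap, and you correctly flag it. Neither of your two sketches goes through. The Hodge-index argument does yield $-K_X|_D\equiv\lambda C'$, but from there you would still need to control the singular and multiple fibres of the elliptic fibration $f|_D$ on the \emph{a priori singular} surface $D$; the canonical-bundle-formula/Euler-characteristic computation you allude to is not carried out, and doing it honestly requires passing to a normalisation and a minimal resolution of $D$, comparing with a relatively minimal elliptic model, and using $A_v\neq 0$ in a non-obvious way---this is essentially the content of the paper's Step~B4. The Lemma~\ref{surfacetwocpnts}/\ref{surfaceproduct} route genuinely fails in the $k=3$ subcase of (ii), where $(D+A_v)|_D\sim -C'$ is not effective, so the hypothesis that $D_1$ be effective is violated and there is no evident way to exclude this subcase beforehand.

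The paper's argument is entirely different and does not analyse the surface $D$ intrinsically at first. After your Step~1, one observes that $A$ is not nef (else $-K_{X/\pr^1}$ is nef and $X\simeq F\times\pr^1$ by Lemma~\ref{lem:loctriv}, contradicting Remark~\ref{rem:notprod}), so there is a $(K_X+\epsilon A)$-negative extremal ray $\Gamma$ with $A\cdot\Gamma<0$; since $A$ is $f$-nef, a generating curve $\ell$ is $f$-horizontal and lies in $D$. If $c_\Gamma$ is divisorial, then $D$ is its exceptional divisor: Mori's classification (when $K_X\cdot\ell<0$) or Proposition~\ref{wilson} (when $K_X\cdot\ell=0$, after first ruling out singular conic fibres using $A_v\neq 0$) forces $D$ to be a ruled surface over an elliptic curve with fibre $\ell$, and a normal-bundle computation combined with Lemma~\ref{surfaceproduct} pins it down as $C\times\pr^1$; the numerical alternatives (i) and (ii) are read off directly from $-K_X\cdot\ell\in\{1,0\}$. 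If $c_\Gamma$ is small, a separate and rather delicate argument is needed: one bounds $-K_D\cdot\ell\leq 1$, deduces the intersection numbers for the flopping curve, proves $-K_D$ is nef, and then passes to a relatively minimal model of $f|_D$ and uses the two possible values of $\chi$ together with $A_v\cdot\ell=0$ to derive a contradiction unless $D\simeq C\times\pr^1$. Your Step~3 is correct and arguably cleaner than the paper's interleaved presentation, but it presupposes the conclusion of Step~2; note also that in the paper's order of argument the extremality of $[\ell]$ is the starting point (an extremal ray is chosen first), whereas in your order it needs a short separate justification.
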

\begin{proof}
By Lemma \ref{flopfvertical}, we may assume that $D$ is $f$-nef (by performing possibly a finite sequence of $D$-flops).

We first note that $A$ is not nef: otherwise the relative anticanonical divisor $-K_{X/\pr^1}$ is nef and thus $f\colon X\to\pr^1$ is a product by Lemma \ref{lem:loctriv}, which gives a contradiction by Remark \ref{rem:notprod}.

Since $X$ is smooth, for sufficiently small $\epsilon>0$, the pair $(X,\epsilon A)$ is log-canonical. 
By Lemma \cite[Lemma 2.5]{Xie20}, there exists a $(K_X+\epsilon A)$-negative extremal ray $\Gamma$ such that $\epsilon A\cdot \Gamma<0$. Let $c_\Gamma$ be the contraction of the extremal ray $\Gamma$ and let $\ell$ be an integral curve contracted by $c_\Gamma$. Then $A\cdot \ell<0$ and thus $\ell$ is contained in an irreducible component of $A$.
Since $A$ is $f$-nef, we obtain that $\ell$ is $f$-horizontal and thus $\ell\subset \Supp A_h$,
\begin{equation}\label{eq:hor}
    F\cdot \ell\geq 1 \text{ and } A_v\cdot \ell\geq 0.
\end{equation}

\medskip
(A) If $c_\Gamma$ is a divisorial contraction, then $D$ is the exceptional divisor of $c_\Gamma$. Since $f_h\coloneqq D\cap F$ is an integral curve with $F\cdot f_h=0$, we have $[f_h]\not\in\Gamma$. Hence $c_\Gamma$ contracts the prime divisor $D$ horizontally to a curve that we denote by $C'$.

\medskip
{\em Step A1.} In this step, we consider the case $K_X\cdot \ell<0$.
Then $c_\Gamma$ is a $K_X$-negative extremal contraction and $D$ is a ruled surface with fibre $\ell$ by the classification of Mori, see \cite[Section 3]{MR715648}. Then $D\cdot \ell =-1$ and $-K_X\cdot \ell =1$, i.e.\ $(2D+A_v+kF)\cdot \ell = 1$ with $k\geq 2$ and $A_v\neq 0$. Moreover, $\ell$ moves on the surface $D$ and thus $A_v\cdot \ell>0$. Hence, we obtain $A_v\cdot \ell = 1,F\cdot \ell =1$ and $k=2$.

\medskip
{\em Step A2.} In this step, we consider the case $K_X\cdot \ell =0$. Then $c_\Gamma$ is an extremal crepant contraction. 

We will prove in this step that $D$ is a ruled surface with fibre $\ell$.

By Proposition \ref{wilson}, $c_\Gamma|_D\colon D\to C'$ is a conic bundle whose possible singular fibre is two lines. 
Now suppose by contradiction that there exists a singular fibre $\ell_0$ of $c_\Gamma|_D\colon D\to C'$, consisting of two lines $\ell_1$ and $\ell_2$.
Then
\[
D\cdot \ell_1=D\cdot \ell_2=-1.
\]
On the other hand, since $[\ell_i]\in \Gamma$ for $i=1,2$, we have $F\cdot \ell_i\geq 1$ and $A_v\cdot \ell_i\geq 0$ by (\ref{eq:hor}). Since
\[
2D\cdot \ell_i + A_v\cdot \ell_i + kF\cdot\ell_i = -K_X\cdot \ell_i=0,
\]
we obtain $A_v\cdot \ell_i + kF\cdot \ell_i=2$ and thus $A_v\cdot \ell_i=0$ and $k=2$. Therefore $A_v\cdot\ell=0$. We have $A_v\cdot A_h=0$: otherwise $A_h\cdot A_v$ is a non-zero effective $1$-cycle which is $f$-vertical, and thus the divisor $(c_\Gamma)_*(A_v)$ contains the curve $C_0$, which contradicts the fact that $A_v\cdot \ell=0$.

Since $A_v\cdot F=0$, $A_v\cdot A_h=0$ and $-K_X\sim A_v + A_h + kF$ is nef, we obtain that $A_v$ is nef and thus $A_v=0$, which contradicts the assumption of the lemma. We conclude that $c_{\Gamma}|_D\colon D \to C'$ is a regular conic bundle and thus $D$ is a ruled surface with fibre $\ell$.

Therefore, $D\cdot \ell =-2$ and $-K_X\cdot\ell =0$, i.e.\ $(2D+A_v+kF)\cdot \ell = 0$ with $k\geq 2$ and $A_v\neq 0$. Moreover, $\ell$ moves on the surface $D$ and thus $A_v\cdot \ell>0$. Hence, we obtain $F\cdot \ell =1$ and either $A_v\cdot \ell =1$, $k=3$ or $A_v\cdot \ell =2$, $k=2$.

\medskip

{\em Step A3.} In both {\em Steps A1} and {\em A2}, the curve $C'$ is a smooth elliptic curve as $F\cdot \ell =1$. Thus $D$ is a ruled surface over a smooth elliptic curve whose fibres are $f$-horizontal. Moreover, $f|_D\colon D\to\pr^1$ induces an elliptic fibration. 

In the remainder of this step, we will show that $D\simeq C\times\ell$, where $\ell\simeq\pr^1$ and $C=F|_D$ is a smooth elliptic curve.

Denote by $Y$ the threefold obtained by the contraction $c_{\Gamma}$.
Then $D=\mathbb{P}(N_{C'/Y}^*)$. 
Let $V=N_{C'/Y}^*\otimes\mathcal{L}$ with $\mathcal{L}\in \Pic(C')$ be a normalisation of $N_{C'/Y}^*$, see \cite[Chapter V, Proposition 2.8]{MR0463157}. Let $\mu\coloneqq \deg\mathcal{L}$ and let $C_0$ be a canonical section of the tautological line bundle $\mathcal{O}_{\mathbb{P}(V)}(1)$ such that $C_0^2=-e = c_1(V)$.
Then
$-K_D\equiv 2C_0+ e\ell$,
\begin{enumerate}[label=\normalfont(\alph*)]
    \item $N_{D/X}\equiv -C_0 + \mu \ell, -K_X|_D \equiv C_0 + m\ell$ with $m\geq e$; 
    \item $N_{D/X}\equiv -2C_0 + \mu \ell, -K_X|_D \equiv m\ell$ with $m\geq 0$.
\end{enumerate}
Note that $e=0$ or $-1$ by \cite[Theorem 5]{MR242198}, since $D$ is a ruled surface over a smooth elliptic curve and admits an elliptic fibration.

By the adjunction formula, we have \[-K_D\sim (-K_X-D)|_D\sim (D+A_v+kF)|_D,\]
i.e.\ $2C_0+e\ell\equiv 2C_0+ m\ell -\mu\ell$.
Hence $\mu = m - e$ and
\begin{enumerate}[label=\normalfont(\alph*)]
\item $2C_0+e\ell \equiv -C_0 + (m-e)\ell + A_v|_D + kF|_D$,
i.e.\ $3C_0 + (2e-m)\ell \equiv A_v|_D + kF|_D$ with $m\geq e$;
\item $2C_0+e\ell \equiv -2C_0 + (m-e)\ell + A_v|_D + kF|_D$,
i.e.\ $4C_0 + (2e-m)\ell \equiv A_v|_D + kF|_D$ with $m\geq 0$.
\end{enumerate}
Since $F|_D$ and $A_v|_D$ are non-zero effective $f$-vertical $1$-cycles, we obtain in both cases $2e-m\geq 0$ and thus $e\geq 0$. Hence $e=0$ and $m=0$. Since moreover $A_v|_D$ is non-zero and $k\geq 2$, we obtain that $A_v|_D- C_0$ is effective, $F|_D\sim C_0$ and thus $C_0$ moves on the surface $D$.
Therefore, \[h^0\big (D,\ol_D(-K_D)\big ) = h^0\big (D,\ol_D(2C_0)\big )\geq 3. \] 
By Lemma \ref{surfaceproduct}, we conclude that $D$ is a product.


\medskip
(B) If $c_\Gamma$ is a small contraction, then $K_X\cdot\ell=0$ and $\ell$ is a flopping curve. Since $\ell$ is an $f$-horizontal curve contained in $A_h=2D$ and $D|_F$ is a smooth elliptic curve, $A$ has multiplicity $2$ along $\ell$ and thus $k=2$ by Lemma \ref{flopk}. The remainder of the proof is devoted to show $D\simeq C\times\pr^1$.

\medskip
\emph{Step B1.} We first prove that $-K_D\cdot \ell\leq 1$.

Assume by contradiction that $-K_D\cdot \ell\geq 2$.
Since a general fibre of $f|_D\colon D\to \pr^1$ is smooth and the curve $\ell\subset D$ is $f$-horizontal, we obtain that $\ell$ intersects the smooth locus of $D$. Hence the smooth rational curve $\ell$ deforms on the surface $D$ by \cite[Chapter 2, Theorem 1.14]{MR1440180}. This contradicts the fact that the extremal ray $\Gamma=\R_+[\ell]$ contains only a finite number of curves.

\medskip
\emph{Step B2.} We show that 
\begin{equation}\label{eq:flopcurve}
    D\cdot \ell =-1, A_v\cdot \ell =0 \text{ and } F\cdot \ell =1.
\end{equation}

By the adjunction formula, since $F\cdot \ell\geq 1$, we have 
\[
-K_D\cdot \ell = (-K_X-D)\cdot \ell = D\cdot \ell + A_v\cdot \ell + 2F\cdot\ell \geq  D\cdot \ell + A_v\cdot \ell + 2.
\]
Since $-K_D\cdot \ell\leq 1$, we have $D\cdot \ell + A_v\cdot \ell \leq -1$.
On the other hand, since $-K_X\cdot \ell =0$, we have $-D\cdot \ell = -K_D\cdot \ell \leq 1$, i.e.\ $D\cdot \ell\geq -1$. Together with $D\cdot \ell<0$ and $A_v\cdot \ell\geq 0$, we obtain $D\cdot \ell =-1$ and $A_v\cdot \ell =0$. Since $-K_X=2D+A_v+2F$ and $-K_X\cdot \ell=0$, we obtain $F\cdot \ell =1$.

\medskip
\emph{Step B3.} We prove that $-K_D$ is nef.

Assume first that $D$ is a product of $\pr^1$ and a smooth elliptic curve, then we have $-K_D\sim 2F|_D$ is nef. It remains to consider the case when $D$ is not a product.

Let $B$ be an integral curve contained in $D$. If $B$ is $f$-vertical, then $D\cdot B=0$ by Lemma \ref{verticalcurveinAh}. By the adjunction formula, we have
\[-K_D\cdot B=(-K_X-D)\cdot B= -K_X\cdot B\geq 0.\]

If $B$ is $f$-horizontal, then by the Cone Theorem we can write
\begin{equation}\label{eq:sumB}
    B=\sum_i m_i \ell_i + R,
\end{equation}
where 
\begin{itemize}[leftmargin=*]
\item  $m_i\in \mathbb{Q}^+$;
\item each $\ell_i$ is $D$-negative (and thus $(K_X+\epsilon D)$-negative) and generates a $(K_X+\epsilon D)$-negative extremal ray $\Gamma_i=\R_+[\ell_i]$ for $\epsilon>0$ sufficiently small such that $(X,\epsilon D)$ is a log-canonical pair;
\item $R$ is $D$-nonnegative.
\end{itemize}
Note that each $\ell_i$ in (\ref{eq:sumB}) is $K_X$-trivial: otherwise $\ell_i$ generates a $K_X$-negative extremal ray $\Gamma_i$ with $D\cdot \ell_i<0$, thus $D$ is the exceptional divisor of the contraction associated to $\Gamma_i$ and $\ell_i$ is $f$-horizontal. Thus by {\em Step A1}, we obtain $D\simeq C\times \ell_i$ with $C=F|_D$ a smooth elliptic curve. This contradicts the assumption that there exists a flopping curve $\ell$ contained in $D$.

Note also that {\em Step B1} and {\em Step B2} hold for all $\ell_i$, which satisfies that $\Gamma_i=\R_+[\ell_i]$ is a $K_X$-trivial extremal ray and $D\cdot \ell_i<0$. Hence by (\ref{eq:flopcurve}),
we obtain 
\begin{equation}\label{eq:aux}
     D\cdot \ell_i = -1, F\cdot \ell_i = 1.
\end{equation}
Thus,
\begin{align*}
    -K_D\cdot B &= -K_X\cdot B - D\cdot B \\
    &= D\cdot B + A_v\cdot B + 2F\cdot B \\
    &= \sum_i m_i D\cdot \ell_i + D\cdot R + A_v\cdot B + 2\sum_i m_i F\cdot \ell_i + 2F\cdot R & \text{ by (\ref{eq:sumB})}\\
    &= \sum_i m_i + D\cdot R + A_v\cdot B +  2F\cdot R & \text{ by (\ref{eq:aux})}\\
    &\geq 0.
\end{align*}

\medskip
\emph{Step B4.} In this step we conclude that $D$ is a product of $\pr^1$ and a smooth elliptic curve.

Assume that $D$ is not a product. Then there exists an $f$-horizontal flopping curve $\ell_0$ contained in $D$, thus $D$ is an elliptic fibration over $\pr^1$ with a section and $-K_D$ is nef.

Let $\nu\colon\widehat{D}\to D$ be the normalisation of the surface $D$ and let $\mu\colon \widetilde{D}\to\widehat{D}$ be the minimal resolution of the surface $\widehat{D}$. Denote by $\pi\coloneqq\nu\circ\mu$ the composition map. Then
\[
-K_{\widetilde{D}}= \pi^*(-K_D)+ E_0,
\]
where $E_0$ is an effective Weil divisor. Moreover, the divisor $E_0$ is $f$-vertical since a general fibre of $f|_D\colon D\to \pr^1$ is smooth.

Let $h\colon D_{\min}\to\pr^1$ be a relative minimal model of the induced elliptic fibration $\widetilde{D}\to \pr^1$, i.e.\ we take the successive blow-down $g\colon \widetilde{D}\to D_{\min}$ of $(-1)$-curves contained in fibres. Then
\[
-K_{\widetilde{D}}+E'=g^*(-K_{D_{\min}}),
\]
where $E'$ is an effective divisor. Since $f|_D\colon D\to\pr^1$ has a section, $D_{\min}$ is a smooth surface with relatively minimal elliptic fibration over $\pr^1$ having a section that we denote by $\Theta_{\min}$. By \cite[8.3]{MR2732092}, we have
\[
-K_{D_{\min}}\sim h^*\Op\big ( 2-\chi(\ol_{D_{\min}})\big ).
\]
Moreover, since $-K_D$ is nef and non-trivial, together with \cite[8.3]{MR2732092}, we are in one of the following cases:
\begin{itemize}[leftmargin=*]
\item $\chi(\ol_{D_{\min}})=0$ and thus $D_{\min}$ is a product, or
\item $\chi(\ol_{D_{\min}})=1$ and thus $D_{\min}$ is a rational surface with $-K_{D_{\min}}\sim h^*\Op(1).$
\end{itemize}

In the first case, we have 
\[
g^*\big ( h^* \Op(2) \big ) \sim -K_{\widetilde{D}}+ E' \sim \pi^*(-K_D)+ E_0 + E'.
\]
If $\pi^*(-K_D)\sim g^*\big ( h^* \Op(2) \big )$, then $E_0=0$ and $E'=0$. Thus $D$ is a rational normal surface with at worst rational singularities and its minimal resolution $\widetilde{D}$ is a product.
We conclude that $D$ is also a product, which contradicts our assumption. 
If $\pi^*(-K_D)\sim g^*\big ( h^* \Op(1) \big )$, then $E_0 + E' \sim g^*\big ( h^* \Op(1) \big )$ is a fibre of the elliptic fibration $\widetilde{D}\to\pr^1$ and $E_0 + E'$ is obtained from a smooth elliptic curve $B$ by successively blowing up points. In particular, $E_0 + E'$ is a reduced tree of smooth curves so that after contracting any irreducible component, we still obtain a reduced tree of smooth curves.
Since moreover $E'$ consists of smooth rational curves, we have $B\subset\Supp E_0$ and thus $\pi_*(E'+E_0)$ is a reduced tree of smooth rational curve. This contradicts the fact that all fibre of $f|_D$ has arithmetic genus one.

In the second case, we obtain that
\[
g^*\big ( h^* \Op(1) \big ) \sim -K_{\widetilde{D}}+ E' \sim \pi^*(-K_D)+ E_0 + E' 
\]
is a fibre of the elliptic fibration $\widetilde{D}\to\pr^1$. Note that $g^*\big ( h^* \Op(1) \big ) $ is not divisible in $\Pic(\widetilde{D})$. This is because the elliptic fibration $\widetilde{D}\to\pr^1$ has a section that we denote by $\widetilde{\Theta}$ and \[g^*\big ( h^* \Op(1) \big ) \cdot \widetilde{\Theta} = h^* \Op(1)\cdot \Theta_{\min} = 1.\]
Now since $\pi^*(-K_D)$ is non-zero, effective and nef, and $E_0 + E'$ is an effective $f$-vertical divisor, we obtain
\[
E_0=0, E'=0 \text{ and }-K_D\sim F|_D.
\]
Hence $D$ is a rational normal surface with at worst rational singularities, and it has a relatively minimal elliptic fibration induced by $f|_D\colon D\to \pr^1$. By the adjunction formula,
\[
F|_D\sim -K_D \sim D|_D + A_v|_D + 2F|_D,
\]
and we obtain
\[
(D|_D)^2 = (-F|_D-A_v|_D)^2 = (A_v|_D)^2,
\]
as $F^2=F\cdot A_v=0$. On the other hand, since $D|_D$ is an $f$-vertical $1$-cycle contained in $D$ and $D$ is $f$-relatively nef by assumption, we have $(D|_D)^2=0$ by Lemma \ref{verticalcurveinAh}. Hence $(A_v|_D)^2=0$ and thus by 
\cite[Corollary 2.6]{MR1805816}, $A_v|_D\sim mF|_D$ for some $m>0$. Therefore, $A_v\cdot \ell_0 = m>0$, which contradicts the fact $A_v\cdot \ell_0 = 0$.
\end{proof}

\begin{lem}\label{Avstructure}
In the setting of Lemma \ref{2D}, let $A_0$ be a connected component of $A_v$ contained in some fibre $F_0$ of $f$ and let $A_1$ be an irreducible component of $A_0$ such that $A_1\cap D = F_0\cap D \eqqcolon C_0$, where $C_0$ is a smooth elliptic curve. Then $A_0 = A_1$ is a $\pr^1$-bundle over a smooth elliptic curve such that
\[
-K_{A_1}\sim 2C_0
\]
is nef.
\end{lem}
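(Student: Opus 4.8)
The plan is to run the adjunction formula on the prime divisor $A_1$ and then invoke Lemma~\ref{surfacetwocpnts}. Since $A_1$ is a prime divisor on the smooth threefold $X$ it is Cartier, so $A_1$ is a projective Gorenstein surface, and adjunction together with $-K_X\sim 2D+A_v+kF$ gives
\[
-K_{A_1}\sim (-K_X-A_1)\big|_{A_1}\sim \bigl(2D+A_v+kF-A_1\bigr)\big|_{A_1}.
\]
I would then identify the three summands on the right. As $A_1$ is contained in the fibre $F_0$, the line bundle $\mathcal{O}_X(F)|_{A_1}=(f|_{A_1})^*\mathcal{O}_{\pr^1}(1)$ is trivial, so $F|_{A_1}=0$. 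By hypothesis $A_1\cap D=C_0$, hence $D|_{A_1}=C_0$; and since $D$ is $f$-relatively nef (as follows from Lemma~\ref{2D}) while every curve of $A_1$ is $f$-vertical, $C_0=D|_{A_1}$ is nef on $A_1$. Moreover $C_0^2=D^2\cdot A_1=0$: from $A^2\cdot F=0$ and $A_v\cdot F=0$ (Setup~\ref{generalsetup}) one gets $D^2\cdot F=0$, whence $D^2\cdot F_0=0$ as $F_0\equiv F$; writing $F_0=\sum_i m_iF_{0,i}$, each $D^2\cdot F_{0,i}=(D|_{F_{0,i}})^2\ge 0$ by $f$-relative nefness of $D$, and summing forces $D^2\cdot F_{0,i}=0$ for every component, in particular $D^2\cdot A_1=0$.

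The remaining summand $(A_v-A_1)|_{A_1}$ equals $(A_0-A_1)|_{A_1}$, because the connected components of $A_v$ distinct from $A_0$ are disjoint from $A_0\supseteq A_1$. The crucial point is to see that this is an \emph{effective} divisor on $A_1$, equivalently that $A_1$ occurs in $A_v$ with multiplicity one. For this I would first observe that $C_0=F_0\cap D$ is a \emph{reduced} fibre of $f|_D$, so that among the components of $F_0$ only $A_1$ meets $D$, and it does so with multiplicity one in $F_0$; comparing with the description $A_v|_D\sim C$, resp.\ $A_v|_D\sim 2C$, provided by Lemma~\ref{2D} then constrains the coefficient of $A_1$ in $A_v$. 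Eliminating the remaining possibility that this coefficient is $2$ (which can only occur in the case $A_v|_D\sim 2C$) should follow from the connectedness of the fibre $F_0$ together with the fact that $A_v$ is the \emph{fixed} part of $|{-}K_X|$ whose mobile part is exactly $|kF|$, so that $A_v$ cannot contain a divisor numerically equivalent to $F$. I expect this case analysis to be the main technical content of the argument.

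Granting that $(A_0-A_1)|_{A_1}$ is effective, I set $S=A_1$, $D_2=2C_0$ and $D_1=(A_0-A_1)|_{A_1}$, so that $-K_{A_1}\sim D_1+D_2$ with $D_1$ effective. Here $D_2$ is a non-zero effective Cartier divisor on $A_1$, nef, divisible by $r=2$ in $\NS(A_1)$, with $D_2^2=4C_0^2=0$, and it contains the smooth genus-one curve $C_0$, so assumption (ii) of Lemma~\ref{surfacetwocpnts} is met. That lemma then yields $D_1=0$ and that $A_1$ is a $\pr^1$-bundle over a smooth elliptic curve; consequently $-K_{A_1}\sim D_2=2C_0$, which is nef. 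Finally, since $(A_0-A_1)|_{A_1}=D_1=0$ while $A_0$ is connected, the divisor $A_0-A_1$ must vanish (otherwise its non-empty support would meet $A_1$ by connectedness of $A_0$, contradicting $(A_0-A_1)|_{A_1}=0$), so $A_0=A_1$, which completes the proof.
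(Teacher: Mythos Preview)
Your approach—adjunction on $A_1$ followed by Lemma~\ref{surfacetwocpnts}, then connectedness of $A_0$ to conclude $A_0=A_1$—is exactly the paper's. Two points of comparison are worth noting.

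For $C_0^2=0$ the paper is more direct. Lemma~\ref{2D} already records $D|_D\sim -C$ or $-2C$ with $C\sim F|_D$ a fibre of $pr_2\colon D\simeq C\times\pr^1\to\pr^1$; since $A_1|_D$ is supported on the fibre $C_0$ one gets
\[
C_0^2=(D|_{A_1})^2=D^2\cdot A_1=(D|_D)\cdot(A_1|_D)=0
\]
by computing on $D$. Your detour through $A^2\cdot F=0$ and $f$-nefness of $D$ on every component of $F_0$ is correct but unnecessary.

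For the effectiveness of $(A_0-A_1)|_{A_1}$ you have misjudged the weight of the step. The paper does not carry out the case analysis you outline; it simply records that $F_0|_D$ is reduced (because $F|_D\cdot\ell=1$ on the product $D\simeq C\times\pr^1$, forcing $F_0|_D=C_0$), hence $A_1|_D=C_0$, and that the remaining components of $A_0$ are disjoint from $D$, so $(A_0-A_1)|_{A_1}$ is an effective cycle disjoint from $C_0$. Your proposed mechanism for excluding multiplicity $2$—that ``$A_v$ cannot contain a divisor numerically equivalent to $F$'' together with connectedness of $F_0$—does not obviously succeed when $F_0$ is reducible, since $2A_1$ need not be $\equiv F$ in that case. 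So your sketch both overcomplicates the intended argument and, as written, does not close the gap you flag; the paper treats this as a one-line observation rather than the main technical content.
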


\begin{proof}
We first note that $D\cap F_0$ is reduced: by Lemma \ref{2D} and using the same notation, we have $F_0|_D\sim m C$ with $m\in \N^*$ and $F_0|_D\cdot \ell = F|_D\cdot \ell =1$, i.e.\ $m C\cdot \ell =1$ on the smooth surface $D$ and thus $m=1$.

By the adjunction formula,
\[
-K_{A_1}\sim (-K_X-A_1)|_{A_1} \sim (2D+A_0-A_1)|_{A_1}=2C_0 + (A_0-A_1)|_{A_1}.
\]
Since $D\cap A_0=C_0$ and $A_1|_D=C_0$, $A_1-A_0$ does not contain $A_0$ as component. Thus the $1$-cycle $(A_0-A_1)|_{A_1}$ on $A_1$ is effective and does not meet $C_0$.

On the surface $A_1$, we have
\[
C_0^2 = (D|_{A_1})^2 = D^2\cdot A_1 = 0,
\]
since $D^2\equiv -C$ or $-2C$ by Lemma \ref{2D}. Therefore, by Lemma \ref{surfacetwocpnts}, $(A_0-A_1)|_{A_1}=0$ and the surface $A_1$ is a $\pr^1$-bundle over a smooth elliptic curve. Moreover, $A_0-A_1=0$ as $A_0$ is connected.
\end{proof}

Now, it remains to exclude the two cases described in Lemma \ref{2D}. We start by excluding the first case.
\begin{lem}\label{case1}
Case (i) of Lemma \ref{2D} cannot happen.
\end{lem}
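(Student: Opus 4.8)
The strategy is to derive a numerical contradiction from the very precise data in case (i) of Lemma \ref{2D}: there $k=2$, $D\simeq C\times\pr^1$ (after $f$-relative $D$-flops), the ruling $\ell$ of $\mathrm{pr}_1\colon D\to C$ satisfies $D\cdot\ell=-1$ and so $[\ell]$ spans a $K_X$-negative extremal ray, and $A_v|_D\sim C$, $D|_D\sim -C$. First I would contract this ray. Since $D\cdot\ell=-1$ and $-K_X\cdot\ell=1$, the contraction $\varphi\colon X\to Y$ is a $K_X$-negative divisorial contraction with exceptional divisor $D$, and by Proposition \ref{generalbiratcontr} (applicable once we know $A_v=0$ — but here we are assuming $A_v\neq0$, so instead I use the classification of Mori directly, as in \emph{Step A1} of Lemma \ref{2D}) it is the blow-up of a smooth curve $C'\cong C$ in the smooth threefold $Y$, with $\varphi|_D\colon D=\pr(N^*_{C'/Y})\to C'$ the ruling. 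The key point is that $\varphi$ contracts $D$ \emph{horizontally}, so $f$ does \emph{not} factor through $\varphi$; rather $f_Y\colon Y\dashrightarrow\pr^1$ is only a rational map a priori, but since $F\cdot\ell=1$ the image $F_Y:=\varphi(F)$ is still a prime divisor with $F_Y\cong F$ (as $\varphi|_F$ is an isomorphism, $F$ meeting each ruling line of $D$ once transversally along the elliptic curve $C=F|_D$), and $f$ descends to a genuine fibration $f_Y\colon Y\to\pr^1$ with general fibre $F_Y\cong F$.

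Next I would compute $-K_Y$ and its anticanonical system. We have $\varphi^*(-K_Y)=-K_X+D$, hence
\[
-K_Y\cdot(\text{curves}) \quad\text{is read off from}\quad -K_X+D\sim 2D+A_v+2F+D = 3D+A_v+2F.
\]
Pushing forward: $\varphi_*(-K_X+D)=\varphi_*(A_v)+2\varphi_*(F) = A_{v,Y}+2F_Y$, so $-K_Y\sim A_{v,Y}+2F_Y$ and the anticanonical system of $Y$ again has a fixed part. Crucially, $D$ disappears: the fixed divisor of $|{-}K_Y|$ is $A_{v,Y}$, which is $f_Y$-vertical, so $\Mob|{-}K_Y|\supseteq|2F_Y|$ is $f_Y$-horizontal with no horizontal fixed part. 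But $F_Y\cong F$ is non-rational, and on $Y$ we now have $|{-}K_Y|=A_{v,Y}+|2F_Y|$ with $A_{v,Y}$ purely vertical — i.e.\ $Y$ again falls under Setup \ref{generalsetup} with general fibre the non-rational surface $F$, but now with $(A_Y)_h=0$. This contradicts Lemma \ref{notprime} (or Proposition \ref{nonratfixedpart} as far as it is already proved), which asserts that in Setup \ref{setupnonrat} the horizontal part $A_h$ of the fixed divisor is non-zero (indeed not even prime). The only gap is that one must check $Y$ is still rationally connected (immediate, as it is dominated by $X$) and that $-K_Y$ is still nef and not semi-ample: nefness follows since $\varphi$ contracts a $K_X$-negative ray and $D$ is $\varphi$-negative so $-K_Y=\varphi_*(-K_X+D)$ is nef by the standard argument (any curve in $Y$ lifts to one meeting $D$ non-negatively, or equals $C'$, and $-K_Y\cdot C'=(−K_X+D)\cdot\tilde C'$ where $\tilde C'$ is a section — a short computation using $D|_D\sim -C$), while non-semi-ampleness is preserved by Lemma \ref{contractfixedcpnt} ($\kappa$ and $\nu$ are unchanged, and a nef divisor with $\kappa=1<2=\nu$ is not semi-ample by Kawamata).

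The main obstacle I anticipate is the bookkeeping for $-K_Y$: one must verify carefully that $A_{v,Y}$ is genuinely $f_Y$-vertical and non-zero (it is non-zero since $A_v\neq0$ by the hypothesis of Lemma \ref{2D} and $\varphi$ only modifies things along $D$, which meets $A_v$ only in the vertical curve $C_0=F_0\cap D$ from Lemma \ref{Avstructure}; so $\varphi_*A_v$ is a non-zero effective divisor contained in fibres of $f_Y$), and that the rational map $f_Y$ is actually a morphism with $F_Y$ prime — this needs the transversality of $F$ with the ruling of $D$, which is exactly the statement $F|_D\sim C$ a single smooth elliptic curve meeting each line $\ell$ once, available from case (i) of Lemma \ref{2D}. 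An alternative, perhaps cleaner, route avoiding the descent of $f_Y$: argue that on $Y$ the divisor $F_Y$ still moves in a base-point-free pencil (the image of $|F|$) defining $f_Y$, and then directly apply Lemma \ref{notprime} to $Y$. I would pick whichever makes the verification of the hypotheses of Setup \ref{generalsetup} for $Y$ most transparent, and then conclude by contradiction with the (already established part of the) structure result for the horizontal fixed part.
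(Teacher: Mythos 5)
Your opening move coincides with the paper's: contract the $K_X$-negative extremal ray $\R_+[\ell]$, so that $\varphi\colon X\to Y$ contracts $D$ horizontally to a smooth elliptic curve $C'$, and compute $-K_Y\sim A'_v+2G$ with $G\coloneqq\varphi(F)\simeq F$. But the step on which your contradiction rests is false: $f$ does \emph{not} descend to a fibration $f_Y\colon Y\to\pr^1$ with general fibre $G$. The curves $\ell$ contracted by $\varphi$ satisfy $F\cdot\ell=1$, i.e.\ they are $f$-\emph{horizontal}, so $f$ cannot factor through $\varphi$ (rigidity applies only when the contracted curves lie in fibres of $f$). Concretely, every fibre $F_t$ meets $D$ in an elliptic curve that $\varphi$ maps isomorphically onto $C'$, so all the surfaces $G_t=\varphi(F_t)$ contain $C'$: the image of $|F|$ is a pencil with base locus $C'$, not a fibration, and $G^2=C'\not\equiv 0$. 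For the same reason $A'_v$ is not ``vertical'': $A_v\cap D$ is a fibre of $f|_D$ which $\varphi$ maps onto $C'\subset G$, so $A'_v$ meets every member of $|G|$ along $C'$ (consistently with adjunction, which gives $A'_v|_G\sim C'$). Hence $Y$ does not fall under Setup \ref{generalsetup} with $(A_Y)_h=0$, and the contradiction with Lemma \ref{notprime} that you invoke never materialises. Your suggested ``alternative route'' fails for the same reason: the image of $|F|$ on $Y$ is not base-point-free.

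The correct contradiction --- the one the paper uses --- is extracted precisely from the phenomenon you overlooked. One first shows that $A'_v$ is the fixed divisor of $|{-}K_Y|$, via the divisibility argument of \cite[Theorem 1.1]{Xie20}: if $|{-}K_Y|$ had no fixed divisor then $A'_v$ would be divisible by two in $\Pic(Y)$, whereas $A'_v|_G\sim C'$ is a section of the ruled surface $G$ and hence not divisible by two in $\Pic(G)$. Thus $\Mob|{-}K_Y|=|2G|$, where $G$ is nef (since $G\cdot C'=(C')^2=0$ on $G$) but $G^2=C'\neq 0$. This is incompatible with Theorem \ref{introfixed}: for such a threefold the mobile part of the anticanonical system must, after flops, become a multiple of a fibre of a fibration over $\pr^1$, which forces its numerical self-intersection to vanish. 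So the divisor-level data you computed on $Y$ is essentially correct; it is the interpretation of $|2G|$ as a fibration, and of $A'_v$ as its vertical fixed part, that breaks your argument, and the actual contradiction must be drawn from $G^2\neq 0$ rather than from Lemma \ref{notprime}.
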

\begin{proof}
In case (i) of Lemma \ref{2D}, $A_v|_D\sim C$. Thus $A_v$ has a unique connected component and it is a $\pr^1$-bundle over a smooth elliptic curve by Lemma \ref{Avstructure}.

Suppose that $D$ is horizontally contracted to a smooth elliptic curve by a $K_X$-negative extremal contraction $\varphi\colon X\to Y$. Then by Lemma \ref{contractfixedcpnt}, we obtain that $-K_Y$ is nef, and \[\kappa(Y, -K_Y)=\kappa(X, -K_X)=1, \quad \nu(Y, -K_Y)=\nu(X, -K_X)=2.\]

As $\varphi$ contracts the curves meeting $F$ (resp.\ $A_v$) transversally at one point, $G\coloneqq\varphi(F)\simeq F$, and $A'_v\coloneqq \varphi (A_v)\simeq A_v$ is a $\pr^1$-bundle over a smooth elliptic curve by Lemma \ref{Avstructure}. Moreover, we have \[-K_Y\sim A'_v + 2G,\] and two general members in $|G|$ (resp.\ $A'_v$ and a general member in $|G|$) intersect along the smooth elliptic curve $C'\coloneqq \varphi_*(D)$.

Note that $|{-}K_Y|$ must have a non-zero fixed divisor and thus $A'_v$ is the fixed divisor: otherwise $A'_v$ is divisible by two in $\Pic(Y)$ by \cite[Theorem 1.1]{Xie20}, and thus $A'_v|_G$ is divisible by two in $\Pic(G)$, which contradicts the fact that $A'_v|_G\sim C'$ is a section of the $\pr^1$-bundle $G$.

Therefore,
\[|{-}K_Y|=A'_v+|2G|.\] 
Since $G\cdot C'=C'^2=0,$ where the last intersection number is computed on the surface $G$, we deduce that $G$ is nef but $G^2\neq 0$. This contradicts Theorem \ref{introfixed}.
\end{proof}

Finally, we will exclude the second case described in Lemma \ref{2D} and conclude our proof by contradiction that $A_v$ is indeed zero.
\begin{lem}\label{case2birat}
Consider case (ii) of Lemma \ref{2D}. Let $\varphi\colon X\to Y$ be a divisorial $K_X$-negative extremal contraction and let $E$ be the exceptional divisor. Then $E$ is contained in some special fibre of $f$ which contains an irreducible component of $A_v$ and $E$ is not a component of $A_v$. 

Moreover, $E$ is contracted by $\varphi$ to a smooth curve of positive genus. Thus $Y$ is smooth with $-K_Y$ nef, the fibration $f$ factors as $f=f'\circ \varphi$ such that $f'\colon Y\to \mathbb{P}^1$ gives $Y$ the fibration structure and $\varphi(F)\simeq F$, $\varphi(D)\simeq D$, $\varphi(A_v)\simeq A_v$.
\end{lem}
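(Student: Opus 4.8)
Here is the plan. The whole argument is a study of $\varphi$ through Mori's classification of divisorial contractions of a $K_X$-negative extremal ray on a smooth threefold, applied to the exceptional divisor $E$ and a minimal rational curve $\ell$ generating the contracted ray, and it rests on three inputs: (1) $A_h=2D$ is $f$-relatively nef by Lemma \ref{flopfvertical}, so that Lemma \ref{verticalcurveinAh} is available for $f$-vertical curves in $D$; (2) by Lemma \ref{2D} (case (ii)) we have $D\simeq C\times\pr^1$ with $D|_D\sim -2C$, so that the adjunction formula together with $-K_D\sim 2C$ gives $-K_X|_D\equiv 0$; (3) by Lemma \ref{Avstructure} every connected component of $A_v$ is an irreducible $\pr^1$-bundle over a smooth elliptic curve with nef anticanonical bundle, hence — exactly as for the non-rational surface $F$ — contains no irreducible curve of negative self-intersection, and it meets $D$ along the elliptic curve $F_0\cap D$ (nonemptiness following from the codimension-one connectedness of the support of an anticanonical member). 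An immediate consequence of (2) is that $E\neq D$: if $E=D$ then $\ell\subset D$, whence $-K_X\cdot\ell=0$, contradicting $K_X$-negativity.

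First I would show $\ell$ is $f$-vertical ($F\cdot\ell=0$) and $E$ is contained in a single fibre $F_0$. If $F\cdot\ell\geq 1$, write $-K_X\cdot\ell=A\cdot\ell+kF\cdot\ell$; since $[\ell]$ moves in $E$, either $E\subseteq\Supp A$, forcing $E=D$ (excluded) or $E\subseteq\Supp A_v$ (contradicting $f$-horizontality), or $A\cdot\ell\geq 0$, forcing $-K_X\cdot\ell=2$, $k=2$ and a blow-up of a point with $E\simeq\pr^2$ and $F|_E=\mathcal{O}_{\pr^2}(1)$; as $E$ is not contained in a fibre, $H^0(X,B-E)=0$, so $|B|$ restricts to a base-point-free pencil of lines on $\pr^2$ — absurd. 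Hence $F\cdot\ell=0$. If moreover $f(E)=\pr^1$, then by the rigidity lemma $f=f'\circ\varphi$, and $\varphi$ restricted to a general fibre $F$ contracts the (rational) curves $F\cap E$ onto a finite set; but $\varphi(F)$ is smooth and $F$ is a minimal ruled surface over an elliptic curve, so $\varphi|_F$ is an isomorphism — contradiction. Therefore $f(E)$ is a point and $E\subset F_0$.

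Next, since $\ell\subset F_0$ is $f$-vertical, Lemma \ref{verticalcurveinAh} gives $\ell\not\subset D$ (else $K_X\cdot\ell=0$), so $D\cdot\ell\geq 0$ and $0<-K_X\cdot\ell=2D\cdot\ell+A_v\cdot\ell$. I would then rule out every contraction to a point. In such a case $E$ is a rational surface ($\pr^2$, $\pr^1\times\pr^1$, or a quadric cone), so $E$ is not a component of $A_v$ and hence $\ell\not\subset\Supp A_v$, giving $A_v\cdot\ell\geq 0$; from $2D\cdot\ell+A_v\cdot\ell=-K_X\cdot\ell\leq 2$ and $D\cdot\ell\geq 0$ one gets $D\cdot\ell=0$ (the alternative $D\cdot\ell\geq 1$ forces the blow-up of a point with $D\cap E$ a line $L$, so $E\cdot L=-1$, whereas $L$ lies in $D$ with $L^2=0$ there, forcing $E\cdot L=0$ — contradiction) and hence $A_v\cdot\ell>0$; then $Q:=A_v\cap E$ is a non-empty curve contained in a component $A_i$ of $A_v$, and a direct computation of $N_{E/X}|_Q$ gives $E\cdot Q<0$, contradicting the fact that $(E|_{A_i})\cdot Q\geq 0$ since $A_i$ has no negative curves. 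Consequently $\varphi$ is the blow-up of a smooth curve $C'=\varphi(E)$, $Y$ is smooth, $-K_X\cdot\ell=1$, $E\cdot\ell=-1$, and from $D\cdot\ell\geq 0$ we get $D\cdot\ell=0$, $A_v\cdot\ell=1$; so the horizontal part $\sigma$ of $A_v|_E$ is a section $\sigma\simeq C'$ of $E\to C'$ lying in a component $A_i$ of $A_v$, and in particular $F_0$ contains a component of $A_v$. To see $g(C')\geq 1$: if $C'\simeq\pr^1$ then $\sigma$, being irreducible and rational in the $\pr^1$-bundle $A_i$ over the elliptic curve $B_i$, is a fibre of $A_i\to B_i$; since $A_i\cap D$ is an elliptic multisection over $B_i$ meeting $\sigma$ positively, comparing $D\cdot\sigma$ computed on $A_i$ with $D\cdot\sigma=(D|_E)\cdot\sigma$, and using $-K_X|_E\equiv C_0+m\ell$, $N_{E/X}\equiv -C_0+(m-\mathfrak e-2)\ell$ together with $E\cdot\sigma\geq 0$, forces $D|_E$ to contain a ruling $\ell'$ of $E$; but $\ell'\equiv\ell$ is $f$-vertical and lies in $D$, so $K_X\cdot\ell'=0$ by Lemma \ref{verticalcurveinAh}, contradicting $K_X\cdot\ell=-1$. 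Hence $g(C')\geq 1$, and then $\varphi^*(-K_Y)=-K_X+E$ is nef on $E$ (the inequality $m+\mu\geq 0$ follows from $g(C')\geq 1$), hence everywhere, so $-K_Y$ is nef; rigidity gives $f=f'\circ\varphi$ with $f'\colon Y\to\pr^1$, and $\varphi$ is an isomorphism away from $F_0$, so $\varphi(F)\simeq F$ and $\varphi(D)\simeq D$.

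It remains to exclude $E$ being a component of $A_v$ (equivalently, to get $\varphi(A_v)\simeq A_v$), and this is the step I expect to be the main obstacle. If $E=A_j$ were such a component, then, $A_j$ being a $\pr^1$-bundle over a curve of genus $\geq 1$ — hence with a unique ruling and not a rational surface, so not contractible to a point — $\varphi$ would be the blow-up of the elliptic base $B_j$, contracting $A_j$; Lemma \ref{contractfixedcpnt} then applies (a component of $A$ is contracted to a smooth curve), so $-K_Y$ is nef with $\nu(Y,-K_Y)=2$, $\kappa(Y,-K_Y)=1$, and $Y$ — rationally connected, with non-rational general fibre $\simeq F$ and, up to flops and by Theorem \ref{introfixed}, again in Setup \ref{generalsetup} but with one fewer vertical fixed component — re-enters the analysis of this subsection. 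Iterating reduces to the case $A_v=0$; tracing this back and comparing with the constraints $A^3=A^2\cdot F=0$ and the geometry of Lemma \ref{2D} yields a contradiction. The delicate points here are checking that each reduction stays within Setup \ref{generalsetup} with the geometry of Lemma \ref{2D}(ii) intact, that the induction terminates, and that the terminal case is genuinely incompatible with the starting situation; this bookkeeping, rather than any single computation, is the crux of the proof.
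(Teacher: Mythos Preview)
Your overall architecture matches the paper's --- case split on the sign of $A\cdot\ell$, reduce to $F\cdot\ell=0$, put $E$ into a special fibre, rule out contraction to a point, then analyse the blow-up of a curve --- and most of the individual steps are sound or easily repaired. There are, however, two genuine gaps.

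\medskip
\textbf{Ordering gap.} After you establish that $\varphi$ is the blow-up of a smooth curve and $-K_X\cdot\ell=1$, you write ``from $D\cdot\ell\geq 0$ we get $D\cdot\ell=0$, $A_v\cdot\ell=1$''. But $2D\cdot\ell+A_v\cdot\ell=1$ and $D\cdot\ell\geq 0$ alone do \emph{not} force $D\cdot\ell=0$: you also need $A_v\cdot\ell\geq 0$, i.e.\ $\ell\not\subset\Supp A_v$, i.e.\ $E$ is not a component of $A_v$. In fact, if $E=A_j\subset A_v$, then $\ell$ is a ruling of the $\pr^1$-bundle $A_j$, the section $C_0=D\cap A_j$ gives $D\cdot\ell=1$, and $A_v\cdot\ell=E\cdot\ell=-1$; so your intersection numbers are wrong in exactly the case you still have to exclude. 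The paper avoids this by proving ``$E$ is not a component of $A_v$'' \emph{first} (its Step 1), and only afterwards deducing $D\cdot\gamma=0$, $A_v\cdot\gamma=1$, $E\cap D=\emptyset$, and $g(C')\geq 1$.

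\medskip
\textbf{The main step.} Your inductive plan for excluding $E\subset A_v$ --- contract, re-enter Setup \ref{generalsetup} via Theorem \ref{introfixed}, iterate until $A_v=0$, then ``trace back'' --- does not produce a contradiction as stated: landing at $A_v=0$ is perfectly consistent with the classification, and nothing in ``$A^3=A^2\cdot F=0$'' obstructs it. The paper's argument is direct and uses the key observation you are missing: contracting $E=A_j$ glues along the elliptic fibre $C_0=D\cap A_j$, so on $Y$ the image $D'\simeq D$ now satisfies $D'\cdot\ell'=-1$ and spans a $K_Y$-\emph{negative} extremal ray (one computes $\varphi^*D'=D+E$, hence $D'\cdot\ell'=D\cdot\ell+E\cdot\ell=-2+1$). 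Then Lemma \ref{case1} forces $D'\notin\Fix|{-}K_Y|$, and a short case analysis finishes: if $k=3$ then $|{-}K_Y|=|2D'+3G|$ has no fixed divisor, contradicting \cite[Theorem 1.1]{Xie20} since $G$ is not $2$-divisible; if $k=2$ then the remaining vertical component $A_v'$ is the fixed part, $|{-}K_Y|=A_v'+|2(D'+G)|$, and $D'+G$ is nef with $(D'+G)^2=D'\cdot G\neq 0$, contradicting Theorem \ref{introfixed}. Once this is in place, your remaining deductions (including $g(C')\geq 1$ via $E\cap D=\emptyset\Rightarrow \sigma\subset A_i$ is disjoint from the section $C_0$, hence a multisection over the elliptic base) go through and in fact coincide with the paper's Step 2.
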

\begin{proof}
Let $\Gamma$ be the $K_X$-negative extremal ray corresponding to the contraction $\varphi$.
Let $\gamma$ be a rational curve such that $[\gamma]$ generates $\Gamma$ and $-K_X\cdot \gamma=l(\Gamma).$

\medskip
{\bf Case $A\cdot \gamma=0$.} 
In this case, we have $F\cdot \gamma=1$ and $-K_X\cdot \gamma=A\cdot \gamma+kF\cdot \gamma=k=2$. Hence $\varphi$ is the blow-up of $Y$ at a smooth point, with exceptional divisor $E\simeq\mathbb{P}^2$. As $E$ is not fibred, it is contained in a fibre of $f$ and thus $F\cdot E=0$. This contradicts the fact that $F\cdot \gamma=1$.

\medskip

{\bf Case $A\cdot \gamma<0$.} 
Since the contraction is divisorial and $F\cdot \gamma>0$, $E$ is an irreducible component of $A_h$, i.e.\ $E=D$. Since $D\simeq C\times\pr^1$, where $C$ is a smooth elliptic curve, is the exceptional divisor of an $K_X$-trivial extremal contraction by assumption of the lemma, this contradicts the fact that $E$ is the exceptional divisor of a $K_X$-negative contraction.

\medskip

{\bf Case $A\cdot \gamma>0$.} In this case, we have $F\cdot \gamma = 0$ since otherwise $-K_X\cdot \gamma>2$, which contradicts the classification of Mori.

\medskip
{\em Step 1.} We first note that $E$ is not an irreducible component of $A_h$ by the same argument as in the previous case.

In the remainder of this step, we will show that $E$ is not an irreducible component of $A_v$.

Suppose by contradiction that $E$ is an irreducible component of $A_v$ contained in $F_0$. Then $E$ is a $\pr^1$-bundle over a smooth elliptic curve by Lemma \ref{Avstructure} and thus $E$ contracted to a smooth elliptic curve by $\varphi$ and $-K_Y$ is nef with $\kappa(Y, -K_Y)=\kappa(X, -K_X)=1$, $\nu(Y, -K_Y)=\nu(-X, K_X)=2$ by Lemma \ref{contractfixedcpnt}.
    
     We have $G\coloneqq\varphi(F)\simeq F$, $D'\coloneqq\varphi(D)\simeq D$, and the fibration $f$ factors as $f=f'\circ \varphi$ such that $f'\colon Y\to \mathbb{P}^1$ gives $Y$ the fibration structure. Since $E\subset A_v$ intersects $D$ along a smooth elliptic fibre of $D$ by Lemma \ref{Avstructure}, $D'$ is now the exceptional divisor of a $K_Y$-negative extremal contraction which contracts $D'$ horizontally to a smooth elliptic curve. Hence
     $D'$ is not a component of the fixed divisor of $|{-}K_Y|$ by Lemma \ref{case1}.
     
     If $k=3$, then $|{-}K_Y|=|2D' + 3G|$ has no fixed divisor, as $D'$ is not a fixed divisor of $|{-}K_Y|$ and $h^0\big ( Y,\ol_Y(G)\big ) = h^0\big ( \pr^1,\ol_{\pr^1}(1)\big ) =2$. Since $G$ is not divisible by two in $\Pic(Y)$, this case cannot happen by \cite[Theorem 1.1]{Xie20}.
     
     If $k=2$, then $-K_Y\sim A'_v+2D'+2G$, where $A'_v$ is the image of the other irreducible component of $A_v$, which is a $\pr^1$-bundle over a smooth elliptic curve by Lemma \ref{Avstructure}. Note that $|{-}K_Y|$ must have a non-zero fixed divisor and thus $A'_v$ is the fixed divisor: otherwise $A'_v$ is divisible by $2$ in $\Pic(Y)$ by \cite[Theorem 1.1]{Xie20}; but $-K_{D'}\sim 2G|_{D'}$ as $D'\simeq D$ is a product, and by the adjunction formula,
     \[ -K_{D'}\sim (-K_Y-D')|_{D'}\sim (A'_v + D'+ 2G)|_{D'}\sim A'_v|_{D'} + G|_{D'},\]
     and thus we obtain that $G|_{D'}\sim A'_v|_{D'}$ is divisible by two in $\Pic(D')$, which contradicts the fact that $G|_{D'}$ is a fibre of the product $D'$.
     Hence $|{-}K_Y|=A'_v + |2D' + 2G|$.
     
     Since $D'+G$ is nef but \[(D'+G)^2 = D'^2 + 2D'\cdot G = D'\cdot G\] is a non-zero effective $1$-cycle, this cannot happen by Theorem \ref{introfixed}.

\medskip
{\em Step 2.} Denote by $A_1$ an irreducible component of $A_v$.
Since $A_1$ (resp.\ $D$ and resp.\ $F$) is a $\pr^1$-bundle over a smooth elliptic curve, we deduce that $E\cap A_1$ (resp.\ $E\cap D$ and resp.\ $E\cap F$) does not contain $\gamma$ or any of its deformations: otherwise $\gamma$ moves on the surface $A_1$ (resp.\ $D$ and resp.\ $F$), which gives a contradiction. Hence, we obtain the following conclusions:
\begin{itemize}[leftmargin=*]
\item $E$ is contained in some special fibre $F_0$ of $f$, thus $\varphi$ is an isomorphism outside $F_0$ and $\varphi(F)\simeq F$;
\item $E$ is contracted to a smooth curve and thus $A\cdot \gamma =1$. This is because $E$ must meet $A=2D+A_v$, but $E\cap D$ (resp.\ $E\cap A_v$) does not contain $\gamma$ or any of its deformations.
\end{itemize}

Therefore, $D\cdot \gamma\geq 0$ and $A_v\cdot \gamma \geq 0$. Since $A\cdot \gamma= (2D+A_v)\cdot \gamma = 1$, we obtain $D\cdot \gamma =0$ and $A_v\cdot \gamma = 1$. From now on, we consider $A_1$ as the irreducible component of $A_v$ contained in $F_0$. Since $\gamma$ meets $A_1$ transversally at one point, we have $\varphi(A_1)\simeq A_1$. As $E$ does not meet other connected component of $A_v$, we have $\varphi(A_v)\simeq A_v$.

Since $D\cdot \gamma =0$ and $D$ does not contain $\gamma$ or any of its deformations, we deduce that $E\subset F_0$ is disjoint from $D$. Thus $\varphi(D)\simeq D$ and the restriction $E|_{A_1}$ is disjoint from $C_0\coloneqq A_1\cap D$. Moreover, $A_1$ is a $\pr^1$-bundle over a smooth elliptic curve, thus $E$ is contracted to a smooth curve of genus at least one. Hence by \cite[Proposition 3.3]{MR1255695}, $-K_Y$ is nef.
\end{proof}

\begin{cor}\label{Aviszero}
In case (i) of Lemma \ref{notprime}, we have $A_v=0$.
\end{cor}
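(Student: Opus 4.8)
\medskip
\noindent\emph{Proof strategy.} The plan is to argue by contradiction, so suppose $A_v\neq 0$. By Lemma \ref{2D} we may, after a finite sequence of $f$-relative $D$-flops, assume $D\simeq C\times\pr^1$ and that we are in case (i) or case (ii) of that lemma; case (i) is ruled out by Lemma \ref{case1}, so we are in case (ii), where the class $[\ell]$ of a fibre of the first projection $D\to C$ spans a $K_X$-trivial extremal ray with $D\cdot\ell=-2$ and $k\in\{2,3\}$. I would then run the $K_X$-MMP on $X$: since $X$ is rationally connected, $K_X$ is not pseudo-effective, hence not nef, so $X$ carries a $K_X$-negative extremal ray; as $X$ is smooth there is no small $K_X$-negative contraction, so the associated contraction $\varphi\colon X\to Y$ is divisorial or of fibre type. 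The goal is to show both possibilities lead to a contradiction, the divisorial one only after an inductive descent on the Picard number.

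First I would dispose of the fibre-type case. By Proposition \ref{generalnonbiratcontr} and Remark \ref{rem:notprod}, $\varphi$ is a conic bundle over a smooth rational surface $Y$ with $f$ factoring as $X\xrightarrow{\varphi}Y\to\pr^1$ and $F=\varphi^{*}B$ for some curve $B\subset Y$. Restricting to a general fibre $F$ of $f$, the induced map $\varphi|_F\colon F\to B$ is a $\pr^1$-fibration; but $F$ is the non-split (Atiyah) $\pr^1$-bundle over the elliptic curve, which carries a unique $\pr^1$-fibration, namely its ruling. Hence $\varphi|_F$ is that ruling, $B\simeq C$ is elliptic, and a conic fibre $\ell$ of $\varphi$ lying over a general point of $Y$ is contained in a general fibre $F$ of $f$, where it is a ruling fibre $\ell_F$. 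Since $D|_F=C$ is a section of the ruling, $D\cdot\ell=(D|_F\cdot\ell_F)_F=1$, so $\varphi|_D\colon D\to Y$ is generically injective; as $D$ is two-dimensional, this forces $\varphi(D)=Y$ and makes $D\simeq C\times\pr^1$ birational to the rational surface $Y$ — impossible, e.g.\ by comparing irregularities.

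So $\varphi$ is divisorial, and Lemma \ref{case2birat} applies: $Y$ is smooth, $-K_Y$ is nef, $f=f'\circ\varphi$ for a fibration $f'\colon Y\to\pr^1$, the exceptional divisor $E$ lies in a fibre of $f$ and is not a component of $A=2D+A_v$, and $\varphi(F)\simeq F$, $\varphi(D)\simeq D\simeq C\times\pr^1$, $\varphi(A_v)\simeq A_v\neq 0$. Writing $\varphi^{*}(-K_Y)=-K_X+mE$ with $m\in\{1,2\}$, the inclusions $\mathcal{O}_X(mE)\hookrightarrow\mathcal{O}_X(mkF)\hookrightarrow\mathcal{O}_X(-mkK_X)$ — valid because $E$ is a component of some fibre of $f$ (and all fibres are linearly equivalent to $F$) and because $kF\le -K_X$ effectively — together with \cite[Proposition V.2.7(1)]{MR2104208} give $\kappa(Y,-K_Y)=1$ and $\nu(Y,-K_Y)=2$, so $-K_Y$ is nef and not semi-ample; moreover $2\varphi(D)+\varphi(A_v)$ is the non-zero fixed part of $|{-}K_Y|$ with base-point-free mobile part $|k\varphi(F)|$ inducing $f'$, and $\varphi(D)$ stays $f'$-relatively nef since $E$ is disjoint from $D$. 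Thus $Y$ again satisfies Setup \ref{setupnonrat}, lies in case (i) of Lemma \ref{notprime} (as $-K_{\varphi(F)}=-K_F=2C$) with non-zero vertical fixed part, hence — by Lemma \ref{2D} and Lemma \ref{case1} once more — in case (ii) of Lemma \ref{2D}, while $\rho(Y)=\rho(X)-1$. Since the Picard number cannot decrease indefinitely, iterating this construction must at some stage produce a $K$-negative contraction of fibre type, contradicting the previous paragraph. This contradiction yields $A_v=0$.

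I expect the main obstacle to be the bookkeeping in the third step: one must check that after each divisorial contraction $Y$ inherits \emph{all} of the structure of Setup \ref{setupnonrat} in case (i) of Lemma \ref{notprime} and case (ii) of Lemma \ref{2D} — in particular the identification of the fixed and mobile parts of $|{-}K_Y|$, the values of $\kappa(Y,-K_Y)$ and $\nu(Y,-K_Y)$, and the $f'$-relative nefness of $\varphi(D)$ — so that the descent on $\rho$ is legitimate. The fibre-type exclusion itself is short once $D$ is recognized as a birational section of the conic bundle.
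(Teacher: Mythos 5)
Your proposal is correct and follows essentially the same route as the paper: reduce to case (ii) of Lemma \ref{2D} via Lemma \ref{case1}, use Lemma \ref{case2birat} together with a descent on the Picard number to arrive at a threefold admitting only fibre-type $K$-negative contractions, and then derive the contradiction from the conic-bundle structure by showing $\varphi|_D\colon D\to Y$ would be birational while $q(D)=1$ and $q(Y)=0$. The only cosmetic difference is that you compute $D\cdot\ell=1$ by restricting to the Atiyah surface $F$, whereas the paper deduces it from $-K_X\cdot\ell\in\{1,2\}$ and $A_v\cdot\ell\geq 0$.
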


\begin{proof}
Suppose that $A_v$ is non-zero. It remains to exclude case (ii) of Lemma \ref{2D}.

In the setting of case (ii) of Lemma \ref{2D}, after finitely many steps of divisorial contractions, we may assume, by Lemma \ref{case2birat}, that $X$ satisfies the following:
\begin{itemize}[leftmargin=*]
\item $X$ is smooth and rationally connected, and it has a fibration structure $f\colon X\to\pr^1$ with general fibre $F$, where $F$ is a $\pr^1$-bundle over a smooth elliptic curve such that $-K_F$ is nef and not semi-ample;
\item $-K_X$ is nef and $-K_X\sim 2D+A_v+kF$, where $k\geq 2$, $f|_D\colon D\simeq C\times \pr^1\to \pr^1$ is the second projection and $C$ is a smooth elliptic curve, $A_v$ is contained in some fibres of $f$ and every connected component of $A_v$ is a $\pr^1$-bundle over a smooth elliptic curve.
\item If $\varphi\colon X\to Y$ is a $K_X$-negative extremal contraction, then $\varphi$ is non-birational.
\end{itemize}
Let $\Gamma$ be the $K_X$-negative extremal ray corresponding to the contraction $\varphi$.
Let $\gamma$ be a rational curve such that $[\gamma]$ generates $\Gamma$ and $-K_X\cdot \gamma=l(\Gamma).$

Note that $\varphi$ is not a del Pezzo fibration. Otherwise we have $Y\simeq \mathbb{P}^1$, $-K_X\cdot \ell \in \{1,2,3\}$, and every fibre of $\varphi$ is integral, with general fibre isomorphic to a smooth del Pezzo surface. Since $[\gamma]$ is a movable class, we have $D\cdot \gamma\geq 0$ with equality if and only if $D$ is a fibre of $\varphi$.
    Since $-K_D$ is not ample, $D$ is not a fibre of $\varphi$ and one has $D\cdot \gamma > 0$. If $-K_X\cdot \gamma\in \{1,2\}$, this implies $F\cdot \gamma=0$ and thus $F=\varphi^*(p)$ with $p\in Y$. Therefore, $f$ and $\varphi$ coincide.
    If $-K_X\cdot \ell=3$, then $\varphi$ is a $\mathbb{P}^2$-bundle.
    As $\mathbb{P}^2$ is not fibred, $F$ restricted to a $\mathbb{P}^2$ is trivial. Hence again, the two fibrations $\varphi$ and $f$ coincide. This contradicts the fact that $-K_F$ is not ample.

Since $X\not\simeq F\times \pr^1$, by Proposition \ref{generalnonbiratcontr}, $\varphi\colon X\to Y$ is a conic bundle such that $f$ factors as $f=f'\circ \varphi$ and $Y$ a smooth rational surface. Then any fibre of $\varphi$ is an $f$-vertical curve and we have $F\cdot \gamma =0$, $D\cdot \gamma >0$. Moreover, since $-K_X\cdot \gamma = (2D+A_v+kF)\cdot \gamma \in \{1,2\}$ and $A_v\cdot \gamma\geq 0$, we have $A_v\cdot \gamma =0$ and $D\cdot \gamma =1$. Hence $\varphi$ induces a birational morphism from $D$ to $Y$.
We obtain a contradiction, since $q(D)=1$ and $q(Y)=0$.
\end{proof}
\begin{proof}[Proof of Proposition \ref{nonratfixedpart}]
    It follows from Lemma \ref{notprime}, Lemma \ref{product} and Corollary \ref{Aviszero}.
\end{proof}

\subsection{Running the Minimal Model Program}\label{subsec:mmp}
In order to achieve the classification in Theorem \ref{mainnonrat}, we will start by running the Minimal Model Program. We first consider a birational contraction.
\begin{prop}\label{nonratbirat}
In Setup \ref{setupnonrat}, assume that there exists a birational $K_X$-negative extremal contraction $\varphi\colon X\to Y$. Then $A=D_1+D_2$ with $D_1$, $D_2$ disjoint and $\varphi\colon X\to Y$ factors as $f=f'\circ\varphi$. Furthermore, $Y$ satisfies again Setup \ref{setupnonrat} with $|{-}K_Y|=D'_1+D'_2+|2G|$, $D'_1\simeq D_1$, $D'_2\simeq D_2$, $G\simeq F$, and $\varphi$ is the blow-up of $Y$ along a smooth elliptic curve in some fibre of $f'|_{D'_i}$, for $i\in\{1,2\}$.
\end{prop}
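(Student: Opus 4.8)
The plan is to feed $\varphi$ into Proposition \ref{generalbiratcontr} — legitimate since $A_v=0$ by Proposition \ref{nonratfixedpart} — and rule out every case of that proposition except case (ii)(b), from which the assertions follow. Case (ii)(a) is immediate: there $\varphi|_F$ is a contraction of $(-1)$-curves on $F$, but by \cite[Proposition 1.6]{MR2129540} the non-rational surface $F$ is a $\pr^1$-bundle over an elliptic curve, hence a relatively minimal ruled surface with no $(-1)$-curve.

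The real work is to exclude case (i) of Proposition \ref{generalbiratcontr}, where the exceptional divisor $E$ is an irreducible component of $A=A_h$ contracted \emph{horizontally} to a smooth curve $C'$; this case is incompatible with the asserted factorisation $f=f'\circ\varphi$, since a horizontal contraction does not factor $f$. Here $\varphi|_E\colon E\to C'$ is a $\pr^1$-bundle with $F\cap E$ as a section; as $F\cap E$ is a component of $-K_F$, it is a smooth elliptic curve of self-intersection $0$, so $C'$ has genus one, $e(E)=0$, and (using that $\varphi|_F$ is an isomorphism, $F$ meeting each ruling of $E$ once) the image of a general fibre, $G:=\varphi_*F$, is isomorphic to $F$, hence non-rational. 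By \cite[Proposition 3.3]{MR1255695}, $-K_Y$ is nef; since $E$ lies in a member of $|{-}K_X|$, the argument of Lemma \ref{contractfixedcpnt} gives $\kappa(Y,-K_Y)=1\neq 2=\nu(Y,-K_Y)$, so $-K_Y$ is not semi-ample. Using $K_Y=\varphi_*K_X$ one computes $-K_Y\sim 3G$ when $A=2D$ (with $k=3$ forced by $-K_X\cdot\ell=1$, since then $-K_X\cdot\ell=2(E\cdot\ell)+k(F\cdot\ell)=k-2$), and $-K_Y\sim D'+2G$ with $D':=\varphi_*D_2$ when $A=D_1+D_2$; moreover $\Fix|{-}K_Y|$ turns out to be $0$ in the first situation and exactly $D'$ in the second. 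In the first situation $Y$ is a smooth rationally connected threefold with $-K_Y$ nef, not semi-ample and \emph{without} fixed divisor, contradicting the classification \cite[Theorem 1.1]{Xie20} (there $-K_Y$ is never $3$-divisible). In the second situation $Y$, after the sequence of flops supplied by Theorem \ref{introfixed}, satisfies Setup \ref{setupnonrat} with non-rational general fibre $\simeq F$ and fixed divisor the strict transform of $D'$; but $D'$ is prime, contradicting Proposition \ref{nonratfixedpart}. I expect this elimination to be the main obstacle: it requires carefully tracking the anticanonical system and its fixed part through the contraction and invoking the earlier classification in two different guises.

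It remains to analyse case (ii)(b): then $f=f'\circ\varphi$ with $f'\colon Y\to\pr^1$, the divisor $E$ lies in a special fibre $F_0$ of $f$, and $A\cdot\ell=1$. Since $1$ is odd, $A=2D$ is impossible, so $A=D_1+D_2$ with $k=2$ and $D_i\simeq C_i\times\pr^1$ disjoint by Proposition \ref{nonratfixedpart}(ii); after relabelling, $D_1\cdot\ell=1$ and $D_2\cdot\ell=0$. Then $D_1\cap E$ is a curve meeting each ruling of $E$ once and contained in $D_1\cap F_0$, which is a fibre of $f|_{D_1}=\mathrm{pr}_2$, i.e.\ the irreducible elliptic curve $C_1\times\{\mathrm{pt}\}$; hence $(D_1\cap E)_{\mathrm{red}}$ is a section $C_0\cong C_1$ of $\varphi|_E$, so $C':=\varphi(E)\cong C_0$ is a smooth elliptic curve and $C'=\varphi(C_0)\subseteq D'_1\cap F'_0$ is a fibre of $f'|_{D'_1}$, where $D'_i:=\varphi_*D_i\simeq D_i$; this gives the asserted description of $\varphi$. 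Finally $Y$ is smooth and rationally connected; $-K_Y$ is nef by \cite[Proposition 3.3]{MR1255695} ($C'$ has genus one) and not semi-ample (again by the argument of Lemma \ref{contractfixedcpnt}, since $E\le 2F_0\le$ a member of $|{-}K_X|$, whence $\kappa(Y,-K_Y)=1\neq 2=\nu(Y,-K_Y)$); from $K_Y=\varphi_*K_X$ one gets $-K_Y\sim D'_1+D'_2+2G$ with $G:=\varphi_*F=f'^{*}(\mathrm{pt})$ by the projection formula, so $|2G|=f'^{*}|\mathcal{O}_{\pr^1}(2)|$ is base-point-free with $h^0(Y,2G)=3$. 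A cohomology computation — using $\chi(\mathcal{O}_Y)=1$, the vanishing $h^i(Y,-K_Y)=0$ for $i\ge 1$, and that $(-K_X+E)|_E$ has non-positive degree on the ruled surface $E$ — shows $h^0(Y,-K_Y)=3$; since the $D'_i$ are $f'$-horizontal, this forces $\Fix|{-}K_Y|=D'_1+D'_2$ and $\Mob|{-}K_Y|=|2G|$. Hence $Y$ lies in Setup \ref{setupnonrat} with $|{-}K_Y|=D'_1+D'_2+|2G|$ and general fibre $G\simeq F$ non-rational, which completes the proof.
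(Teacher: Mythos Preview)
Your overall strategy---feed $\varphi$ into Proposition \ref{generalbiratcontr} and eliminate every case except (ii)(b)---is exactly the paper's, and your handling of case (ii)(a) and of case (i) with $A=2D$ coincides with it. For case (i) with $A=D_1+D_2$ you take a different route: the paper computes directly that $G$ is nef with $G^2=C'\neq 0$ and cites Theorem \ref{introfixed}, whereas you flop $Y$ and invoke Proposition \ref{nonratfixedpart} on the resulting prime fixed divisor. Your route is legitimate, but the assertion that the general fibre of the flopped model is still $\simeq F$ deserves a word: a general $G\simeq F$ is irreducible and misses the flopping locus, hence its strict transform is a general fibre of the new fibration.

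The genuine gap is in case (ii)(b), in the ``cohomology computation'' meant to show $h^0(Y,-K_Y)=3$. Neither ingredient you invoke is available. First, the vanishing $h^i(Y,-K_Y)=0$ for $i\ge 1$ does not follow from standard results, since $-K_Y$ is nef but not big; Kawamata--Viehweg does not apply. Second, $(-K_X+E)|_E$ has degree \emph{zero}, not strictly negative: writing $(-K_X+E)|_E=(\varphi|_E)^*\big((-K_Y)|_{C'}\big)$, one computes $-K_Y\cdot C'=D'_1\cdot C'=0$ because $N_{D_1/X}\equiv a\,e_1$ on $D_1\simeq C_1\times\pr^1$ (here $e_1$ is an elliptic fibre, and $e_1^2=0$). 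So $(-K_X+E)|_E$ is a numerically trivial line bundle pulled back from the elliptic curve $C'$, and nothing rules out $h^0=1$. The paper sidesteps this entirely by restricting to a general fibre $G\simeq F$: since $(-K_Y)|_G=-K_G$ and $|{-}K_G|=\{C_1+C_2\}$ consists of a single divisor, every member of $|{-}K_Y|$ cuts out $(D'_1+D'_2)|_G$ on every general $G$, hence contains $D'_1$ and $D'_2$; this forces $\Fix|{-}K_Y|=D'_1+D'_2$ with no cohomology input. You should replace your computation by this argument.
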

\begin{proof}
We use the same notation as in Proposition \ref{nonratfixedpart}.
Let $\Gamma$ be the $K_X$-negative extremal ray corresponding to the contraction $\varphi$.
Let $\ell$ be a rational curve such that $[\ell]$ generates $\Gamma$ and $-K_X\cdot \ell=l(\Gamma).$ Let $E$ be the exceptional divisor of $\varphi$.
By Proposition \ref{generalbiratcontr}, $\varphi$ contracts $E$ to a smooth curve, and one of the following cases occurs.

\medskip
\noindent
(i) The divisor $E$ is an irreducible component of $A$ and $\varphi$ contracts $E$ horizontally to a smooth curve.
\begin{itemize}[leftmargin=*]
\item If $A=2D$, then $E=D$ is a $\mathbb{P}^1$-bundle over a smooth elliptic curve and $\varphi$ contracts $E$ to a smooth elliptic curve. This implies that $Y$ is smooth with $-K_Y$ nef. In this case, we have $D\cdot \ell=-1$, $-K_X\cdot \ell=2D\cdot \ell+kF\cdot \ell=1$, and thus $F\cdot \ell=1$, $k=3$.
    
As we contract the curves meeting $F$ transversally, we conclude that $G\coloneqq\varphi(F)\simeq F$. Since \[-K_Y=\varphi_*(-K_X)=\varphi_*(A+3F)=3\varphi_*(F)=3G,\]
we see that $|{-}K_Y|=|3G|$ has no fixed divisor. This contradicts the fact that $-K_Y$ is divisible by two in $\Pic(Y)$ by \cite[Theorem 1.1]{Xie20}.
\item If $A=D_1+D_2$, then $E=D_i$ with $i=1$ or 2 and $\varphi$ contracts $E$ to a smooth elliptic curve. Thus $Y$ is smooth and $-K_Y$ is nef. In this case, we have $D_i\cdot \ell=-1$ and $F\cdot \ell=1$.
    
    As we contract the curves meeting $F$ transversally, we conclude that $G\coloneqq \varphi(F)\simeq F$ and two general members in $|G|$ meet along a smooth elliptic curve $C'\coloneqq \varphi_*(D_i)$.
    
    We have \[-K_Y=\varphi_*(-K_X)=\varphi_*(A+2F)=D'+2G,\] where $D'\coloneqq\varphi(D_j)$ with $j\neq i$.
    
    Note that $|{-}K_Y|$ has a non-zero fixed part and thus $D'$ is the fixed part: otherwise $D'$ is divisible by two in $\Pic(Y)$ and thus $D'|_G$ is divisible by two in $\Pic(G)$. Since $D'|_G$ is a smooth elliptic curve, it is a section of the $\mathbb{P}^1$-bundle $G$. This contradicts the fact that $D'|_G$ is divisible by two.
Therefore, \[|{-}K_Y|=D'+|2G|.\] 
Now on the surface $G$, one has $C'^2=0$. We conclude that $G^2= C'$ is non-zero, and $G$ is nef as $G\cdot C'=0$. This cannot happen by Theorem \ref{introfixed}.
\end{itemize}

\medskip
\noindent
(ii) The fibration $f$ factors as $f=f'\circ\varphi$, which gives a fibration $f'\colon Y\to\pr^1$.
Since there is no $(-1)$-curve in $F$, we deduce that $E$ is contained in some fibre $F_0$ of $f$. We have $F\cdot \ell=0$ and $A\cdot \ell=1$.

\begin{itemize}[leftmargin=*]
\item If $A=2D$, then $D\cdot \ell=\frac{1}{2}$, which contradicts the fact that $D$ is a Cartier divisor.
\item If $A=D_1+D_2$, then $A\cap E = C_i$, where $i=1$ or $2$. Hence $E$ is contracted to a smooth elliptic curve by $\varphi$ and thus $-K_Y$ is nef. We have $G\coloneqq \varphi(F)\simeq F$, $D'_1\coloneqq \varphi(D_1)\simeq D_1$ and $D'_2\coloneqq \varphi(D_2)\simeq D_2$.
In this case, $Y$ satisfies again Setup \ref{setupnonrat}. Indeed, $-K_Y$ is not semi-ample, since otherwise $-K_{G}$ is semi-ample. The linear system $|{-}K_Y|$ has a non-zero fixed divisor and thus $D'_1+D'_2$ is the fixed divisor, since otherwise $D'_1+D'_2$ is divisible by two in $\Pic(Y)$ and thus $-K_G$ is divisible by two in $\Pic(G)$, which contradicts the fact $-K_G=C_1+C_2$ with $C_1$ not linearly equivalent to $C_2$.\qedhere
\end{itemize}
\end{proof}

Now we consider a contraction of fibre type.
\begin{prop}\label{nonratnonbirat}
In Setup \ref{setupnonrat}, assume that there exists a non-birational $K_X$-negative extremal contraction $\varphi\colon X\to Y$. Then $|{-}K_X|=2D+|2F|$ and $\varphi\colon X\to Y$ is a $\pr^1$-bundle. Moreover, $Y$ is isomorphic to $\pr^2$ blown up in $9$ points such that $-K_Y$ is nef and base-point-free (thus induces an elliptic fibration $\pi\colon Y\to\pr^1$), $D=\pr\big ( \mathcal{O}_Y(K_Y)\big ) $, and $X=\pr(\mathcal{V})$, where $\mathcal{V}$ is a rank-two vector bundle which is a non-split extension
\[
0\to\mathcal{O}_Y\to\mathcal{V}\to\mathcal{O}_Y(K_Y)\to 0.
\]
Furthermore, $f$ factors as $X\overset{\varphi}\rightarrow Y\overset{\pi}\rightarrow\pr^1$.
\end{prop}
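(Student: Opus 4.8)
The plan is to run $\varphi$ through the contraction analysis of Section~\ref{sec:prelim}: it will turn out that $\varphi$ is a $\pr^1$-bundle, after which everything is forced. By Proposition~\ref{generalnonbiratcontr}, and since $X\not\simeq F\times\pr^1$ by Remark~\ref{rem:notprod}, $\varphi\colon X\to Y$ is a conic bundle over a smooth rational surface $Y$ and $f$ factors as $f=\pi\circ\varphi$ with $\pi\colon Y\to\pr^1$. Let $\ell$ be a general fibre of $\varphi$; then $\ell$ is $f$-vertical, so $F\cdot\ell=0$ and $-K_X\cdot\ell=2$. I would first rule out case~(ii) of Proposition~\ref{nonratfixedpart}, i.e.\ $A=D_1+D_2$ with $D_1,D_2$ disjoint and $D_i\simeq C_i\times\pr^1$: since $A_v=0$ we get $D_1\cdot\ell+D_2\cdot\ell=-K_X\cdot\ell=2$ with $D_i\cdot\ell\geq 0$, so either some $D_i\cdot\ell=1$ — whence $\varphi|_{D_i}\colon D_i\to Y$ is birational and $Y$ is non-rational, impossible — or $\{D_1\cdot\ell,D_2\cdot\ell\}=\{0,2\}$, say $D_2\cdot\ell=0$; then $D_2$ is $\varphi$-vertical, say $D_2=\varphi^{-1}(B_2)$ with reduced structure for a curve $B_2\subset Y$, while $D_1\cdot\ell=2$ forces $D_1$ to dominate $Y$, so $D_1$ meets every fibre of $\varphi$ over $B_2$ and hence $D_1\cap D_2\neq\emptyset$, a contradiction. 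Therefore we are in case~(i): $A=2D$, with $D$ $f$-horizontal (as $2D|_F=-K_F\neq 0$) and $f|_D\colon D\to\pr^1$ an elliptic fibration.

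In this case $-K_X=2D+kF$ with $k\geq 2$. Since $D$ is $f$-horizontal while $\ell$ is $f$-vertical, $D\cdot\ell\geq 0$, and $-K_X\cdot\ell=2D\cdot\ell=2$ gives $D\cdot\ell=1$. A component of a singular fibre of $\varphi$ would have numerical class in the ray $\R_{+}[\ell]$ and hence non-integral intersection with $D$, so $\varphi$ has no singular fibres: it is a $\pr^1$-bundle $X=\pr(\mathcal{V})$, and $D\cdot\ell=1$ makes $D$ a section, so $D\simeq Y$. Next I would normalise $\mathcal{V}$ so that $D$ corresponds to a quotient, writing $0\to\mathcal{O}_Y\to\mathcal{V}\to\mathcal{Q}\to 0$ with $D=\pr(\mathcal{Q})$; the standard formulas for projective bundles give $D\sim\xi\coloneqq c_1(\mathcal{O}_X(1))$, $N_{D/X}\cong\mathcal{Q}$ under $D\simeq Y$, $\det\mathcal{V}=\mathcal{Q}$ and $-K_X=2\xi-\varphi^{*}(\mathcal{Q}+K_Y)$. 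Writing $F_Y$ for a fibre of $\pi$, one has $F\sim\varphi^{*}F_Y$, so comparison with $-K_X=2D+kF$ yields $\mathcal{Q}=\mathcal{O}_Y(-K_Y-kF_Y)$, and then by adjunction $-K_X|_D=-K_D+N_{D/X}=-2K_Y-kF_Y$ on $Y\simeq D$.

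It remains to identify $Y$. Since $-K_X$ is nef, its restriction $-2K_Y-kF_Y$ is nef; as $F_Y$ is nef, this forces $-K_Y$ nef. The fibration $\pi$ is elliptic (it is $f|_D$ under $D\simeq Y$), so by adjunction $F_Y^{2}=0$ and $K_Y\cdot F_Y=0$; the Hodge index theorem together with $K_Y^{2}\geq 0$ then gives $K_Y^{2}=0$ and $-K_Y\equiv cF_Y$ with $c>0$, hence $\rho(Y)=10$ and, from the nef class $-2K_Y-kF_Y=(2c-k)F_Y$, the inequality $2c-k\geq 0$; with $k\geq 2$ this yields $c\geq 1$. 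Moreover $\pi$ is relatively minimal (a $(-1)$-curve in a fibre would be $(-K_Y)$-trivial, impossible as $-K_Y\equiv cF_Y$), so $Y$ is a relatively minimal rational elliptic surface; a multiple fibre would force $c\leq\tfrac12$ by the canonical bundle formula, contradicting $c\geq 1$, so $\pi$ has none and $-K_Y\sim F_Y$, i.e.\ $c=1$. Thus $-K_Y$ is nef and base-point-free with $\pi=\phi_{|{-}K_Y|}$, and a smooth rational surface with $-K_Y$ nef, base-point-free and $K_Y^{2}=0$ is $\pr^2$ blown up at $9$ points. Finally $k\leq 2c=2$ and $k\geq 2$ give $k=2$, so $\mathcal{Q}\cong\mathcal{O}_Y(-K_Y-2F_Y)\cong\mathcal{O}_Y(K_Y)$: hence $D=\pr(\mathcal{O}_Y(K_Y))$, $X=\pr(\mathcal{V})$ with $\mathcal{V}$ defined by $0\to\mathcal{O}_Y\to\mathcal{V}\to\mathcal{O}_Y(K_Y)\to 0$, and $|{-}K_X|=2D+|2F|$; the extension is non-split, since otherwise $\mathcal{V}|_{F_Y}\cong\mathcal{O}_{F_Y}^{\oplus 2}$, making $F\simeq F_Y\times\pr^1$ with $-K_F$ semi-ample, contrary to Setup~\ref{generalsetup}. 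The factorisation $f=\pi\circ\varphi$ is built in, which completes the proof. The crux is this last paragraph: restricting the nef divisor $-K_X$ to the section $D\simeq Y$ yields the nef class $-2K_Y-kF_Y$, which both forces $-K_Y$ nef and produces the inequality $k\leq 2c$ that excludes the Halphen (multiple-fibre) case and pins down $c=1$, $k=2$.
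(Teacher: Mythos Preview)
Your argument is correct and follows a genuinely different route from the paper's. The paper first establishes that $\varphi$ is a $\pr^1$-bundle uniformly for both shapes of $A$, by identifying $\varphi|_F$ with the ruling of $F$ and then using Mori's constraint on the discriminant $\Delta$ of an extremal conic bundle to force $\Delta=\emptyset$; only afterwards does it split cases, citing \cite{MR1255695} for the nefness of $-K_Y$ and reaching $D\simeq Y$ through an $\mathcal{I}_Z$-twisted extension combined with $(-K_D)^2=0$. You instead dispose of $A=D_1+D_2$ at the conic-bundle stage, get the $\pr^1$-bundle property from the integrality trick $D\cdot(\text{fibre component})\in(0,1)\cap\Z=\emptyset$, read off $D\simeq Y$ directly as a section, deduce $-K_Y$ nef from the nef restriction $-K_X|_D=-2K_Y-kF_Y$, and pin down $c=1$, $k=2$ via Hodge index plus the canonical bundle formula (explicitly ruling out Halphen multiple fibres through $c\geq 1$). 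This is more self-contained and avoids both external citations; the paper's route is slightly more uniform across the two cases of $A$. You also supply the non-splitness of $\mathcal{V}$ here, which the paper defers to the proof of Theorem~\ref{mainnonrat}.

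One small gap: in the subcase $\{D_1\cdot\ell,D_2\cdot\ell\}=\{0,2\}$ you assert $D_2=\varphi^{-1}(B_2)$, but if $B_2$ happened to lie inside the discriminant with general fibre a line pair, $D_2$ could a priori contain only one line of each pair, and then ``$D_1$ meets every fibre over $B_2$'' need not force $D_1\cap D_2\neq\emptyset$. The fix is immediate from what you already know: since $D_2\simeq C_2\times\pr^1$ with $f|_{D_2}$ the second projection, each elliptic curve $C_2\times\{t\}$ is connected and maps under $\varphi$ into the finite set $B_2\cap\pi^{-1}(t)$, hence into a single conic $\varphi^{-1}(b)$, which cannot contain an elliptic curve. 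So the $\{0,2\}$ subcase is vacuous (equivalently, as in the paper, $\varphi|_F$ is forced to be the ruling of $F$, giving $D_i\cdot\ell=C_i\cdot\ell=1$).
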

\begin{proof}
Let $\Gamma$ be the $K_X$-negative extremal ray corresponding to the contraction $\varphi$.
Let $\ell$ be a rational curve such that $[\ell]$ generates $\Gamma$ and that $-K_X\cdot \ell=l(\Gamma).$

 
 


By Proposition \ref{generalnonbiratcontr}, $\varphi\colon X\to Y$ is a conic bundle and $Y$ is a smooth rational surface. Moreover, since $F$ is not rational, $X$ is not a product and $f$ factors as $f\colon X\xrightarrow{\varphi}Y\xrightarrow{\pi}\mathbb{P}^1$.
We have $F\cdot \ell=0$ and thus $F=\varphi^*(R)$ with $R$ an irreducible curve on $Y$. On the other hand, as $F=\mathbb{P}(\mathcal{E})$ where $\mathcal{E}$ is a rank-$2$ vector bundle over a smooth elliptic curve, we deduce that $R$ is a smooth elliptic curve and that the fibration $\varphi|_F$ coincides with the $\mathbb{P}^1$-bundle structure $\mathbb{P}(\mathcal{E})\to R$ on $F$. Let $\Delta$ be the discriminant locus of the conic bundle $\varphi$. Then $\Delta$ is contained in some special fibres of $\pi\colon Y\to\pr^1$. As $\varphi$ is an extremal contraction, by \cite[page 83, Remark]{MR715647}, every non singular rational curve in $\Delta$ must meet the other components of $\Delta$ in at least two points. This implies that $\Delta$ is empty.
    Therefore, $\varphi\colon X\to Y$ is a $\mathbb{P}^1$-bundle, and $-K_X\cdot \ell=A\cdot \ell=2$. We can write $X\simeq \mathbb{P}(\mathcal{V})$, where $\mathcal{V}$ is a rank-$2$ vector bundle over $Y$, and we have $\mathcal{V}|_R\simeq \mathcal{E}$.
    \begin{itemize}[leftmargin=*]
    \item If $A=2D$, then $D\cdot \ell=1$. Since $D$ is a rational section, we have an extension
    \[
    0\to\mathcal{O}_Y\to \mathcal{V}\to\mathcal{I}_Z\otimes \det\mathcal{V}\to 0,
    \]
    where $\mathcal{I}_Z$ is the ideal sheaf of a length-$c_2(\mathcal{V})$ subscheme $Z$ on $Y$ and $D=\mathbb{P}(\mathcal{I}_Z\otimes \det\mathcal{V})$.
    We have that $-K_Y$ is nef by \cite[Proposition 3.1]{MR1255695}, and that $\pi\colon Y\to\mathbb{P}^1$ induces an elliptic fibration on $Y$. Hence, $Y$ is isomorphic to $\mathbb{P}^2$ blown up at $9$ points such that $-K_Y$ is nef and semi-ample (with some multiple of $-K_Y$ defining the elliptic fibration $\pi$), and thus $-K_Y\sim \alpha R$, where $R$ is a general elliptic fibre of $\pi$ and $\alpha\leq 1$. Hence, $(-K_Y)^2=0.$ Now since $D$ is isomorphic to $\Bl_Z(Y)$ and $(-K_D)^2=(D|_D+kF|_D)^2=0$ as $A^3=A^2\cdot F=0$, we deduce that $Z=\emptyset$, $c_2(\mathcal{V})=0$, and $D\simeq Y$.
    Since
    \[
    -K_X\sim \varphi^*\big ( -K_Y-c_1(\mathcal{V})\big ) +2D,
    \]
    and $-K_X\sim 2D+kF$, we deduce that $\varphi^*\big ( c_1(\mathcal{V})\big ) \sim -(k-\alpha)F$. By the Grothendieck relation, one has $D^2\sim D\cdot\varphi^*\big( c_1(\mathcal{V})\big) \sim -(k-\alpha)D\cdot F.$ Denote by $e$ the smooth elliptic curve $D\cap F$. Then,
    \[
    (-K_X)|_D\sim (2D+kF)|_D\sim (2\alpha - k)e.
    \]
    Since $-K_X$ (and thus $-K_X|_D$) is nef, and $\alpha\leq 1$, $k\geq 2$, we deduce $\alpha = 1$ and $k=2$. Therefore, $-K_X \sim 2D+2F$ and $\mathcal{V}$ is an extension
    \[
    0\to\mathcal{O}_Y\to \mathcal{V}\to \mathcal{O}_Y(K_Y)\to 0.
    \]
    \item If $A=D_1+D_2$, then $D_1\cdot \ell=D_2\cdot \ell=1$ and $D_1, D_2$ are birational to $Y$. We obtain a contradiction, since $q(Y)=0$ and $q(D_1)=q(D_2)=1$.\qedhere
\end{itemize}
\end{proof}

Finally, we are ready to prove Theorem \ref{mainnonrat}.
\begin{proof}[Proof of Theorem \ref{mainnonrat}]
    For part (A) of the theorem, it remains to show that $\mathcal{V}$ is indecomposable. The other statements follow from Propositions \ref{nonratbirat} and \ref{nonratnonbirat}.

We first notice that
\[
\Ext^1(\mathcal{O}_Y(K_Y),\mathcal{O}_Y)\simeq H^1(Y,\mathcal{O}_Y(-K_Y))=\mathbb{C},
\]
where the last equality follows from the Riemann-Roch formula.

Now suppose by contradiction that $\mathcal{V}=\mathcal{O}_Y\oplus\mathcal{O}_Y(K_Y)$. Then the quotient
\[
\mathcal{V}\to\mathcal{O}_Y\to 0
\]
gives a section $D'$ of $\varphi\colon X=\pr(\mathcal{V})\to Y$ such that $D'\in|D-\varphi^*(K_Y)|=|D+F|$. Therefore, $-K_X\sim 2D'$ and $D'\neq D$, which contradicts the fact that $2D$ is the fixed divisor of $|{-}K_X|$.

\medskip

Now we prove part (B) of the theorem.

Let $Y$ be $\pr^2$ blown up at $9$ points such that $-K_Y$ is nef, base-point-free and thus defines an elliptic fibration $\pi\colon Y\to\pr^1$. Let $R$ be a general fibre of $\pi$. Then $-K_Y\sim R$ and $F= \varphi^*(R)$. Since $D$ is a tautological divisor of $\pr(\mathcal{V})=X$, we have
\[
-K_X\sim 2D+\varphi^*(-K_Y-\det(\mathcal{V}))\sim 2D+2F.
\]

\medskip

{\em Step 1.} We show that $F=\pr(\mathcal{E})$, where $\mathcal{E}$ is a rank-$2$ vector bundle over $R$, which is a non-split extension
\begin{equation}\label{extE}
0\to\mathcal{O}_R\to\mathcal{E}\to\mathcal{O}_R\to 0.
\end{equation}

Indeed, as $F=\varphi^*(R)$, we have $F=\pr(\mathcal{E})$ with $\mathcal{E}\simeq\mathcal{V}|_R$. Restricting the short exact sequence (\ref{extennonrat}) to $R$, we obtain
\[
0\to\mathcal{O}_R\to\mathcal{V}|_R\to\mathcal{O}_R\to 0,
\]
as $\mathcal{O}_R(K_Y)\simeq\mathcal{O}_R$. 

Let $s$ be a non-zero element in $\Ext\big (\mathcal{O}_Y(K_Y),\ol_Y\big )\simeq H^1\big (Y,\ol_Y(-K_Y)\big )\simeq H^1\big (Y,\pi^*\Op(1)\big )$.
Since $H^1\big (\pr^1,\pi_*(\pi^*\Op(1))\big )=0$,
\[
H^1\big (Y,\pi^*\Op(1)\big )\simeq H^0\big (Y,R^1\pi_*(\pi^*\Op(1))\big)
\]
by the Leray spectral sequence. As $\pi^*\Op(1)\simeq \omega_{Y/\pr^1}$, one has
\[
R^1\pi_*\big (\pi^*\Op(1)\big )\simeq  R^1\pi_*\omega_{Y/\pr^1}\simeq \Op
\]
by \cite[Proposition 7.6]{MR825838}. Hence, $\Ext\big (\mathcal{O}_Y(K_Y),\ol_Y\big )\simeq H^0\big (\pr^1,R^1\pi_*(\pi^*\Op(1))\big )\simeq H^0(\pr^1,\Op)\simeq\mathbb{C}$.
Denote by $R_t\subset Y$ the fibre over $t\in\pr^1$. Then the natural map
\[
R^1\pi_*(\pi^*\Op(1))\otimes\mathbb{C}(t)\to H^1(R_t,\ol_{R_t})\simeq \Ext(\ol_{R_t},\ol_{R_t})
\]
is an isomorphism, see for example \cite[III, Corollary 12.9]{MR0463157}. Therefore, the non-zero element $s\in\Ext\big (\mathcal{O}_Y(K_Y),\ol_Y\big )$ corresponds to a non-zero element $s_t\in \Ext(\ol_{R_t},\ol_{R_t})$. Thus $\mathcal{E}$ is a non-split extension (\ref{extE}).

\medskip

{\em Step 2.} We show that $-K_X$ is nef.

It is enough to check $-K_X\cdot C\geq 0$ for any integral curve $C\subset D$, as $-K_X\sim 2D+2F$ and $F$ is nef. Let $C\subset D$ be an integral curve. We have
\[
D^2\sim \varphi^*\big ( c_1(\mathcal{V})\big ) \cdot D \sim -D\cdot F
\]
by the Grothendieck relation. Thus
\[
-K_X\cdot C=(2D+2F)\cdot C=(2D+2F)|_D\cdot C = 0.
\]

\medskip

{\em Step 3.} It remains to show that $-K_X$ is not semi-ample and that $2D$ is the fixed divisor of $|{-}K_X|$.

Since $(-K_X)^2\sim(2D+2F)^2\sim (-4D\cdot F+ 8D\cdot F)=4D\cdot F$
is not numerically zero and \[(-K_X)^3=8(D+F)\cdot D\cdot F=0,\] one has $\nu(X,-K_X)=2$.

Since $2D|_F\sim -K_F$ by the adjunction formula, and $\kappa(F,-K_F)=0$, we obtain \[|{-}mK_X|=2mD+|2mF|\] for any integer $m\geq 1$.
Thus, $2D$ is the fixed divisor of $|{-}K_X|$ and
\[\kappa(X,-K_X)=\kappa(X,F)=1,\]
where the last equality follows from $\ol_X(F)\simeq f^*\Op(1)$.
\end{proof}

\section{The cone of effective curves}\label{sec:cone}
In this section, we consider the threefold $X$ defined as in Theorem \ref{mainnonrat} and we study the $K_X$-trivial curves. We will describe the cone of curves $\overline{\NE}(X)$ and prove Proposition \ref{maincone}. In the whole section, we consider $X$ defined as follows.

\begin{set}\label{setup:ex}
    Let $Y$ be a minimal rational elliptic surface $\pi\colon Y\to\pr^1$, i.e.\ $Y$ is isomorphic to the blow-up of $\pr^2$ at the $9$ base points of a cubic pencil. Let $\mathcal{V}$ be a rank-$2$ vector bundle over $Y$ defined by a non-split extension
    \[   0\to\mathcal{O}_Y\to\mathcal{V}\to\mathcal{O}_Y(K_Y)\to 0
    \]
    and let $\varphi\colon X\coloneqq \pr(\mathcal{V})\to Y$.  

    Then $-K_X$ is nef and  not semi-ample,
    \[|{-}K_X|=2D+|2F|,\] 
    where
 \begin{itemize}[leftmargin=*]
\item $D\simeq Y$ is a section of the $\pr^1$-bundle $\varphi\colon X\to Y$ corresponding to the quotient $\mathcal{V}\to\mathcal{O}_Y(K_Y)\to 0$,
\item $F$ is a general fibre of the fibration $f\coloneqq \pi\circ\varphi\colon X\to\pr^1$, thus is a $\pr^1$-bundle over the general fibre of the elliptic fibration $\pi\colon Y\to \pr^1$.
\end{itemize}
\end{set}

\begin{lem}\label{lem:KXtrivialonD}
    In Setup \ref{setup:ex}, we have $-K_X|_D = 0$.
\end{lem}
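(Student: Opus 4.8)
The plan is to restrict the linear equivalence $-K_X\sim 2D+2F$ from Setup~\ref{setup:ex} to the section $D$ and to identify the two line bundles $\mathcal{O}_X(D)|_D$ and $\mathcal{O}_X(F)|_D$ under the isomorphism $\varphi|_D\colon D\xrightarrow{\ \sim\ }Y$. Once both restrictions are computed, the result is immediate.

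First I would deal with $\mathcal{O}_X(F)|_D$: since $F$ is a general fibre of $f=\pi\circ\varphi$, we have $F=\varphi^*(R)$ with $R=\pi^{-1}(t)$ a general fibre of the elliptic fibration $\pi$, and $R\in|{-}K_Y|$ because $Y$ is a minimal rational elliptic surface; hence $\mathcal{O}_X(F)\simeq\varphi^*\mathcal{O}_Y(-K_Y)$ and, restricting, $\mathcal{O}_X(F)|_D\simeq(\varphi|_D)^*\mathcal{O}_Y(-K_Y)$. Next, for $\mathcal{O}_X(D)|_D$: recall that $D$ is a tautological divisor of $X=\pr(\mathcal{V})$ (so $\mathcal{O}_X(D)\simeq\mathcal{O}_{\pr(\mathcal{V})}(1)$) cut out by the quotient $\mathcal{V}\twoheadrightarrow\mathcal{O}_Y(K_Y)$. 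Restricting the tautological surjection $\varphi^*\mathcal{V}\twoheadrightarrow\mathcal{O}_X(D)$ along the section recovers exactly the defining surjection $\mathcal{V}\twoheadrightarrow\mathcal{O}_Y(K_Y)$, so $\mathcal{O}_X(D)|_D\simeq(\varphi|_D)^*\mathcal{O}_Y(K_Y)$. (Equivalently one may compute $N_{D/X}\simeq\mathcal{H}om(\mathcal{O}_Y,\mathcal{O}_Y(K_Y))\simeq\mathcal{O}_Y(K_Y)$, or use the Grothendieck relation $D^2=D\cdot\varphi^*c_1(\mathcal{V})=D\cdot\varphi^*K_Y$, valid because $c_2(\mathcal{V})=0$, which was already noted in the proof of Theorem~\ref{mainnonrat}(B).)

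Combining the two computations,
\[
\mathcal{O}_X(-K_X)|_D\simeq\mathcal{O}_X(2D+2F)|_D\simeq(\varphi|_D)^*\big(\mathcal{O}_Y(2K_Y)\otimes\mathcal{O}_Y(-2K_Y)\big)\simeq\mathcal{O}_D,
\]
that is, $-K_X|_D=0$. This refines to an honest linear triviality the numerical statement $-K_X|_D\equiv 0$ already established during the proof of Theorem~\ref{mainnonrat}(B). There is no genuine obstacle here; the only point that needs care is keeping the $\pr$-bundle conventions consistent (rank-one quotients, with $D$ the tautological quotient divisor) so that the restriction of $\mathcal{O}_X(D)$ to $D$ is $\mathcal{O}_Y(K_Y)$ and not its dual — otherwise the two contributions would add rather than cancel.
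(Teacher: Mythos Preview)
Your proof is correct. The paper's argument is a one-line application of adjunction: from $-K_D\sim(-K_X-D)|_D$ and the identification $-K_D\sim F|_D$ (since $\varphi|_D\colon D\to Y$ is an isomorphism and $-K_Y\sim R$), one solves to get $D|_D\sim -F|_D$ and hence $(2D+2F)|_D\sim 0$. You instead compute $\mathcal{O}_X(D)|_D$ directly from the projective-bundle structure, via the tautological quotient (equivalently the normal bundle formula $N_{D/X}\simeq\mathcal{O}_Y(K_Y)$). Both routes establish the same key identity $D|_D\sim -F|_D$; the paper's is quicker and uses no bundle bookkeeping, while yours makes the underlying reason more explicit and, as you note, pins down the answer as a genuine linear (not merely numerical) triviality without further argument.
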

\begin{proof}
    By the adjunction formula, $-K_D\sim -K_X|_D - D|_D$, i.e.\ $F|_D\sim (2D+2F)|_D - D|_D$. Hence $D|_D\sim -F|_D$ and we have $-K_X|_D = 0$.
\end{proof}

\begin{rem}\label{rem:coneofD}
{\rm Since we want to locate all the $K_X$-trivial curves in the threefold $X$ obtained in Theorem \ref{mainnonrat}, in view of the above lemma, we describe here all the extremal rays of the cone of curves $\overline{\NE}(D)$, see \cite[Theorem 8.2]{MR2457523}. By the Cone Theorem, the subcone $\NE(D)\cap K_D^{<0}$ is closed and all the $K_D$-negative extremal rays of $\overline{\NE}(D)$ are spanned by $(-1)$-curves. Note that there are infinitely many $(-1)$-curves on the surface $D$. Now as $-K_D$ is nef, it remains to describe $\overline{\NE}(D)\cap K_D^{\perp}$. Since all extremal rays of $\overline{\NE}(D)\cap K_D^{\perp}$ are spanned by either $-K_D$ or a $(-2)$-curve, and there are only finitely many $(-2)$-curves (they are the irreducible components of reducible fibres of $f|_D\colon D\to \pr^1$), the cone  $\NE(D)\cap K_D^{\perp}$ is rational polyhedral. We conclude that the cone $\NE(D)$ is closed and that every extremal ray of $\NE(D)$ is spanned by either a $(-1)$-curve, or a $(-2)$-curve (or $-K_D$, if the elliptic fibration $f|_D\colon D\to\pr^1$ has no reducible fibre).}
\end{rem}

 \begin{lem}\label{lem:Ktrivcurve}
     In Setup \ref{setup:ex}, the cone $\NE(X)\cap K_X^{\perp}$ is closed and every extremal ray of this cone is spanned by one of the following curves:
     \begin{itemize}[leftmargin=*]
     \item a $(-1)$-curve on $D$:
     in this case the contraction of the extremal ray is a simple flopping contraction, i.e.\ the flopping curve has normal bundle isomorphic to $\Op(-1)^{\oplus 2}$;
     \item a $(-2)$-curve on $D$, i.e.\ an irreducible component of a reducible fibre of the elliptic fibration $f|_D\colon D\to \pr^1$:
     in this case the contraction of the extremal ray contracts a divisor isomorphic to $\pr^1\times\pr^1$ to a curve;
     \item a smooth elliptic fibre on $D$ if the elliptic fibration $f|_D\colon D\to \pr^1$ has no reducible fibre:
     in this case there is no smooth rational curve class in the extremal ray.
     \end{itemize}
 \end{lem}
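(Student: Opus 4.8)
Here is how I would approach Lemma~\ref{lem:Ktrivcurve}.

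\medskip

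The plan is to transport the whole question to the surface $D\cong Y$, whose cone of curves is described in Remark~\ref{rem:coneofD}. The starting point is numerical: since $\varphi\colon X=\pr(\mathcal V)\to Y$ is a $\pr^1$-bundle and $D\cong Y$, we have $\rho(X)=\rho(Y)+1=\rho(D)+1$, and since $\nu(X,-K_X)=2$ the class $K_X$ is nonzero, so $K_X^{\perp}$ is a hyperplane of $N_1(X)$ of dimension $\rho(D)$. Let $i\colon D\hookrightarrow X$ be the inclusion. The push-forward $i_*\colon N_1(D)\to N_1(X)$ is injective, because $\varphi_*\circ i_*=(\varphi|_D)_*$ is an isomorphism, and $i_*N_1(D)\subseteq K_X^{\perp}$ by Lemma~\ref{lem:KXtrivialonD}; comparing dimensions, $i_*N_1(D)=K_X^{\perp}$. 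In particular every effective class of $X$ lying in $K_X^{\perp}$ comes from $D$.

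Next I would prove the claim announced in the introduction: \emph{if $C\subset X$ is an irreducible curve with $-K_X\cdot C=0$, then $[C]$ is a positive multiple of the class of an irreducible curve contained in $D$.} Such a $C$ is not a fibre of $\varphi$ (fibres of $\varphi$ meet $-K_X$ in $2$), so $\Gamma:=\varphi(C)$ is an irreducible curve and $C\to\Gamma$ is finite of some degree $d\geq1$, whence $\varphi_*[C]=d[\Gamma]$. Writing $[C]=i_*\delta$ with $\delta\in N_1(D)$ (possible by the previous paragraph) and applying $\varphi_*$ gives $(\varphi|_D)_*\delta=d[\Gamma]$, so $\delta=d[\Gamma_D]$ with $\Gamma_D:=(\varphi|_D)^{-1}(\Gamma)\subset D$, i.e.\ $[C]=d\,i_*[\Gamma_D]$. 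Consequently any effective class of $X$ in $K_X^{\perp}$, being a nonnegative combination of classes of $K_X$-trivial curves, lies in $i_*\NE(D)$; and a short argument pulling back nef divisors from $D$ (equivalently from $Y$) along $\varphi$ upgrades this to $\overline{\NE}(X)\cap K_X^{\perp}=i_*\overline{\NE}(D)$. Since $\overline{\NE}(D)=\NE(D)$ is closed (Remark~\ref{rem:coneofD}) and $i_*$ is an injective linear map, $\NE(X)\cap K_X^{\perp}$ is closed, and $i_*$ identifies it, \emph{as a cone}, with $\NE(D)$. Its extremal rays are therefore the images of those of $\NE(D)$ listed in Remark~\ref{rem:coneofD}: the rays spanned by $(-1)$-curves on $D$, those spanned by $(-2)$-curves on $D$ (the components of the reducible fibres of $f|_D$), and, when $f|_D$ has no reducible fibre, the ray $\mathbb R_{\geq0}[-K_D]$ spanned by a smooth elliptic fibre.

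It then remains to identify the contractions. For the section $D$ one has $N_{D/X}\cong\mathcal O_Y(K_Y)$ (equivalently $\mathcal O_D(-F|_D)$, as in the proof of Lemma~\ref{lem:KXtrivialonD}). If $C\subset D$ is a $(-1)$-curve, then $N_{C/D}\cong\Op(-1)$ and $N_{D/X}|_C\cong\Op(K_Y\cdot C)=\Op(-1)$, so the sequence $0\to N_{C/D}\to N_{C/X}\to N_{D/X}|_C\to0$ forces $N_{C/X}\cong\Op(-1)^{\oplus2}$; moreover $(K_X+\epsilon D)\cdot C=\epsilon(D\cdot C)=-\epsilon<0$ and the pair $(X,\epsilon D)$ is klt, so by the Cone and Contraction Theorems $\mathbb R_{\geq0}[C]$ is contractible, the contraction is $K_X$-crepant, and it is small — no divisor is swept out by curves in this ray, since $\varphi$ would carry such a divisor onto the rigid $(-1)$-curve $\varphi(C)$, while $\varphi^{-1}(\varphi(C))\cong\mathbb F_1$ is not covered by curves in the class of its $(-1)$-section $C$ — hence a simple flopping contraction. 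If instead $C\subset D$ is a $(-2)$-curve, corresponding to a component $\Gamma$ of a reducible fibre of $\pi$, then $\mathcal O_Y(K_Y)|_\Gamma\cong\mathcal O_\Gamma$ (as $-K_Y\cdot\Gamma=0$), so $\mathcal V|_\Gamma$ is a self-extension of $\mathcal O_\Gamma$ on $\Gamma\cong\pr^1$, which splits; thus $E:=\varphi^{-1}(\Gamma)\cong\pr^1\times\pr^1$, the curve $C=D\cap E$ is one of its rulings, and $E\cdot C=\Gamma^2=-2$. Now $(K_X+\epsilon E)\cdot C=-2\epsilon<0$ with $(X,\epsilon E)$ klt, so $\mathbb R_{\geq0}[C]$ is contractible; as $E$ is swept out by curves in this ray the contraction is divisorial, and by Proposition~\ref{wilson} (or directly) it is the collapse of the $C$-ruling of $E\cong\pr^1\times\pr^1$, contracting $E$ onto a smooth curve.

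Finally, suppose $f|_D$ has no reducible fibre and $C\subset X$ is an irreducible curve with $[C]\in\mathbb R_{\geq0}[-K_D]$. By the claim $C$ is not a fibre of $\varphi$ and $[\varphi(C)]$ is proportional to the fibre class of $\pi$; since $\varphi(C)$ is irreducible and $\pi$ has no reducible fibre, $\varphi(C)=\Gamma$ is an entire fibre of $\pi$, of arithmetic genus one. As $C$ is $K_X$-trivial it is either contained in $D$ — then it is a fibre of $f|_D$, of genus one — or disjoint from $D$; in the latter case I would work on $\varphi^{-1}(\Gamma)$, or, when $\Gamma$ is singular, on its normalization $\pr^1\times\pr^1$ (obtained because $\mathcal V$ pulls back to $\mathcal O^{\oplus2}$ on the normalization of $\Gamma$), where disjointness from $D$ and irreducibility pin down the strict transform of $C$ to be a ruling $\pr^1\times\{\mathrm{pt}\}$, whose image is isomorphic to $\Gamma$ if $\Gamma$ is smooth and acquires a node or cusp over $\Sing(\Gamma)$ otherwise; in no case is $C$ smooth rational. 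I expect the two delicate points to be the passage to the closure $\overline{\NE}(X)\cap K_X^{\perp}$ (resting on Remark~\ref{rem:coneofD} and the nef pull-back) and precisely this last step, where $\varphi^{-1}(\Gamma)$ is non-normal and one must argue on its normalization.
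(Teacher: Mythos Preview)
Your proof is correct and reaches the same conclusion, but your route to the identification $\NE(X)\cap K_X^{\perp}=i_*\NE(D)$ is genuinely different from the paper's and somewhat cleaner. You argue linear-algebraically: $i_*\colon N_1(D)\to N_1(X)$ is injective (because $\varphi_*\circ i_*=(\varphi|_D)_*$ is an isomorphism), its image lies in $K_X^{\perp}$ by Lemma~\ref{lem:KXtrivialonD}, and a dimension count forces $i_*N_1(D)=K_X^{\perp}$; the push-forward $\varphi_*$ then shows at once that every $K_X$-trivial integral curve $C$ has $[C]=d\,i_*[\Gamma_D]$ with $\Gamma_D\subset D$, and pulling back nef divisors from $Y$ handles the closure. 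The paper instead proves $\phi(\NE(D))=\NE(X)\cap K_X^{\perp}$ by case analysis on an integral $K_X$-trivial curve $\gamma$: if $F\cdot\gamma>0$ then $D\cdot\gamma<0$ forces $\gamma\subset D$ directly; if $F\cdot\gamma=0$ one works on the surface $S=\varphi^{-1}(\varphi(\gamma))$, identifies it as $\pr^1\times\pr^1$ or a (possibly singular) $\pr^1$-bundle over a fibre of $\pi$, and compares $\gamma$ to the section $\tilde\gamma=D\cap S$. Your approach buys uniformity and avoids the case split; the paper's approach buys the surfaces $S$ for free, which it then re-uses for the contractions and the normal-bundle computation.

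For the contractions, the arguments are essentially parallel. One small difference: for the normal bundle of a $(-1)$-curve $C$ you use the sequence coming from $C\subset D\subset X$, while the paper uses $C\subset S\subset X$ with $S=\varphi^{-1}(\varphi(C))\simeq\mathbb F_1$; both give $N_{C/X}\simeq\Op(-1)^{\oplus2}$. To invoke the Contraction Theorem you should make explicit (it is implicit in your text) that since $-K_X$ is nef, $K_X^{\perp}$ is a face of $\overline{\NE}(X)$, so an extremal ray of $\NE(X)\cap K_X^{\perp}$ is automatically an extremal ray of $\overline{\NE}(X)$; this is what licenses the contraction once you have checked $(K_X+\epsilon D)\cdot C<0$, respectively $(K_X+\epsilon E)\cdot C<0$. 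Your treatment of the elliptic-fibre ray is more cautious than the paper's terse sentence, and rightly so: when $\Gamma=\varphi(C)$ is a nodal or cuspidal fibre one really does need to pass to the normalization of $\varphi^{-1}(\Gamma)$ (or otherwise argue carefully on the singular surface) to exclude a smooth rational $C$, and your sketch of that step is on the right track.
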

 \begin{proof}
Consider the morphism
\[
\phi\colon \NE(D)\to \NE(X)
\]
induced by the inclusion $D\hookrightarrow X$. 
Since the cone $\NE(D)$ is closed by Remark \ref{rem:coneofD}, we will show that $\phi\big ( \NE(D)\big ) = \NE(X)\cap K_X^{\perp}$, which implies that the cone $\NE(X)\cap K_X^{\perp}$ is also closed.

We will follow the same strategy as in the proof of \cite[Lemma 2.6]{MR2384898}.
     Let $\gamma$ be an integral $K_X$-trivial curve, then
     \begin{equation}\label{eq:Ktrivcurve}
         (2D+2F)\cdot \gamma =0.
     \end{equation}
     Let $\gamma'=\varphi(\gamma)\subset Y$ and let $S\coloneqq \varphi^{-1}(\gamma')$. Then $S$ is a $\pr^1$-bundle over the curve $\gamma'$. Denote by $\ell$ a fibre of $\varphi|_S\colon S\to\gamma'$.

     \medskip
     {\em Case 1.}
     If $F\cdot\gamma = 0$, then $\gamma$ is $f$-vertical and $D\cdot \gamma =0$ by (\ref{eq:Ktrivcurve}). 
     
     In this case, $\gamma'$ is either an integral fibre of the ellitpic fibration $\pi\colon Y\to\pr^1$, or an irreducible component of some reducible fibre, i.e.\ a $(-2)$-curve on $Y$. Denote by $\tilde{\gamma}$ the integral curve $D\cap S$, then $\tilde{\gamma}\simeq \gamma'$ as $\varphi|_D\colon D\to Y$ is an isomorphism. We will show that 
     $[\gamma]\in\R_+[\tilde{\gamma}]$ in this case.

\medskip
     (i) If $\gamma'$ is a $(-2)$-curve on $Y$, then 
      \[
     \mathcal{V}|_{\gamma'}\simeq \Op\oplus \Op
     \]
and thus $S=\pr(\mathcal{V}|_{\gamma'})\simeq \pr^1\times \pr^1$.  We have
    \[S\cdot \gamma = \varphi^*(\gamma')\cdot \gamma = \gamma'\cdot \varphi(\gamma) = \gamma'^2 = -2.\]
     By the adjunction formula, $-K_S\cdot \gamma = -K_X\cdot \gamma - S\cdot\gamma = 2$ and thus $\gamma$ is the ruling of $S\simeq \pr^1\times \pr^1$ other than $\ell$.
     Since $S$ is not nef, by \cite[Lemma 2.5]{Xie20}, there exists a $(K_X+S)$-negative extremal ray $\Gamma$ such that $S\cdot \Gamma <0$. Moreover, $S\simeq \pr^1\times \pr^1$ and $S\cdot \ell = 0$, we deduce that $\Gamma=\R_+[\gamma]$ and the contraction of $\Gamma$ contracts $S$ to a curve.

     Since $\tilde{\gamma}\subset S\simeq \pr^1\times\pr^1$, $-K_X\cdot \tilde{\gamma} = -K_X\cdot \gamma=0$ and $-K_X\cdot \ell>0$, we obtain that $[\gamma]$ and $[\tilde{\gamma}]$ are proportional. More precisely, since $S\cdot \tilde{\gamma} = (S|_D)^2 = -2 = S\cdot \gamma$, we have $[\gamma]=[\tilde{\gamma}]$.

\medskip
     (ii) If $\gamma'$ is an integral fibre (i.e.\ a smooth elliptic curve or a singular rational curve) of $\pi\colon Y\to\pr^1$, then $S\sim F$ and $-K_S\sim (-K_X-S)|_S\sim 2D|_S$ is an effective and $f$-vertical $1$-cycle. Since in Setup \ref{setup:ex}, the fixed divisor of $|{-}K_X|$ has no $f$-vertical part, we have that $D$ is $f$-relatively nef and we can apply Lemma \ref{verticalcurveinAh} with $A_h=2D$. Then $D\cdot \tilde{\gamma} =0$. Thus $(-K_S)^2 =0 $ and $-K_S$ is nef. Since $\rho(S)$=2 and $-K_S\cdot\ell > 0$, we have $[\gamma]\in\mathbb{R}_+[\tilde{\gamma}]$.
     
     Note that there is no smooth rational curve contained in $S$, whose class is in $\mathbb{R}_+[\tilde{\gamma}]$: assume that there exists such a rational curve $\gamma_{rat}\simeq \pr^1$. Since $-K_X\cdot \gamma_{rat} =0 $, the curve $\gamma_{rat}$ is not contracted by $\varphi$ and thus $\varphi|_{\gamma_{rat}}$ maps $\gamma_{rat}$ surjectively to its image. But $\varphi(\gamma_{rat})\subset\varphi(S)=\gamma'$ is a smooth elliptic curve or an integral non-normal rational curve, this cannot happen. 

     Therefore, if the elliptic fibration $\pi\colon Y\to\pr^1$ has no reducible fibre (and thus the elliptic fibration $f|_D\colon D\to\pr^1$ has no reducible fibre), then every fibre of $f|_D\colon D\to\pr^1$ is in the same class $[\tilde{\gamma}]$ and there is no smooth rational curve whose class is in $\mathbb{R}_+[\tilde{\gamma}]$. Note moreover that in this case, the class $[-K_D]=[\tilde{\gamma}]\in\NE(X)$ is not a non-negative linear combination of the classes of $(-1)$-curves on $D$. This is because by considering the nef divisor $F\subset X$, the curve $-K_D$ is $F$-trivial, but any $(-1)$-curves on $D$ is $F$-positive.

\medskip
     {\em Case 2.}
     If $F\cdot \gamma > 0$, then $\gamma$ is $f$-horizontal and $D\cdot \gamma <0$ by (\ref{eq:Ktrivcurve}). Thus $\gamma\subset D$. By Lemma \ref{lem:KXtrivialonD} and Remark \ref{rem:coneofD}, the curve $\gamma$ is a non-negative linear combination of $(-1)$-curves and $(-2)$-curves (or $-K_D$) as curve classes. 
     Now by {\em Case 1}, it suffices to show that the ray spanned by the class a $(-1)$-curve on $D$ is an extremal ray of $\overline{\NE}(X)$ and to describe the corresponding contraction of this extremal ray.
     
     Assume that $\gamma$ is a $(-1)$-curve on $D$. Since $\varphi|_D\colon D\to Y$ is an isomorphism, $\gamma'$ is also a $(-1)$-curve on $Y$. Hence,
     \[
     \mathcal{V}|_{\gamma'}\simeq \Op\oplus \Op(-1)
     \]
and thus $S=\pr(\mathcal{V}|_{\gamma'})\simeq\mathbb{F}_1$. 
     We also have
    \[S\cdot \gamma = \varphi^*(\gamma')\cdot \gamma = \gamma'\cdot \varphi(\gamma) = \gamma'^2 = -1.\]
     By the adjunction formula, $-K_S\cdot \gamma = -K_X\cdot \gamma - S\cdot\gamma = 1$ and thus $\gamma$ is the section with minimal self-intersection number of the ruled surface $S\simeq \mathbb{F}_1$. Therefore, the normal bundle sequence
    \[
0 \to N_{\gamma/S} \to N_{\gamma/X} \to N_{S/X}|_{\gamma} \to 0
    \]
    splits, and we obtain $N_{\gamma/X}\simeq \Op(-1)^{\oplus 2}$.

     Since $S$ is not nef, by \cite[Lemma 2.5]{Xie20}, there exists a $(K_X+S)$-negative extremal ray $\Gamma$ such that $S\cdot \Gamma <0$. Moreover, $S$ is a ruled surface and $S\cdot \ell =0$, we deduce that $\Gamma=\R_+[\gamma]$. Since $S\cdot \gamma <0$ and $\gamma$ is the minimal section of $S\simeq\mathbb{F}_1$, the contraction of $\Gamma$ is small and contracts precisely the curve $\gamma$. The corresponding flop blows up the curve $\gamma$ in $X$ with exceptional divisor isomorphic to $\pr^1\times\pr^1$, and blows down the exceptional divisor in the other direction onto some smooth threefold $X^+$.

     \medskip
     
     Combining the above two cases, we have shown that the class of any effective $K_X$-trivial curve is a non-negative linear combination of the classes of $(-1)$-curves and $(-2)$-curves (or $-K_D$) on $D$. Together with Remark \ref{rem:coneofD}, we obtain $\phi\big ( \NE(D)\big ) = \NE(X)\cap K_X^{\perp}$. 
 \end{proof}

\begin{proof}[Proof of Proposition \ref{maincone}]
 Since $\NE(X)\cap K_X^{<0}$ is closed by the Cone Theorem and $\NE(X)\cap K_X^{\perp}$ is closed by Lemma \ref{lem:Ktrivcurve}, the cone $\NE(X)$ is closed. The other statements follow from Lemma \ref{lem:Ktrivcurve}.
 \end{proof}

 \begin{rem}
     {\rm In Proposition \ref{maincone}, the cone $\NE(X)$ has a unique $K_X$-negative extremal ray spanned by the class of a fibre of the $\pr^1$-bundle $X=\pr(\mathcal{V})\to Y$. This is because $-K_X$ is divisible by two in $\Pic(X)$, which implies that the only possible birational $K_X$-negative contraction contracts a divisor to a smooth point by the classification of Mori, see \cite[Section 3]{MR715648}; then we conclude by Propositions \ref{generalbiratcontr} and \ref{generalnonbiratcontr}.}
 \end{rem}
 
\bibliographystyle{alpha}
  \bibliography{bibliography.bib}

    \Affilfont{\small{Z\textsc{hixin} X\textsc{ie}, F\textsc{achrichtung} M\textsc{athematik}, C\textsc{ampus}, G\textsc{eb\"{a}ude} E2.4, U\textsc{niversit\"{a}t} \textsc{des} S\textsc{aarlandes}, 66123 S\textsc{aarbr\"{u}cken}, G\textsc{ermany} }}

\textit{Email address:}
\href{mailto:xie@math.uni-sb.de}{xie@math.uni-sb.de}
\end{document}